\numberwithin{equation}{section}
\newtheorem{theorem}{Theorem}[section]
\newtheorem{lemma}[theorem]{Lemma}
\newtheorem{proposition}[theorem]{Proposition}
\newtheorem{rem}[theorem]{Remark}
\renewcommand{\tilde}{\widetilde}
\newcommand{\cA}{{\ensuremath{\mathcal A}} }
\newcommand{\cB}{{\ensuremath{\mathcal B}} }
\newcommand{\cC}{{\ensuremath{\mathcal C}} }
\newcommand{\cL}{{\ensuremath{\mathcal L}} }
\newcommand{\cU}{{\ensuremath{\mathcal U}} }
\newcommand{\cM}{{\ensuremath{\mathcal M}} }
\newcommand{\cW}{{\ensuremath{\mathcal W}} }
\DeclareMathSymbol{\leqslant}{\mathalpha}{AMSa}{"36} 
\DeclareMathSymbol{\geqslant}{\mathalpha}{AMSa}{"3E} 
\DeclareMathSymbol{\eset}{\mathalpha}{AMSb}{"3F}     
\renewcommand{\leq}{\;\leqslant\;}                   
\renewcommand{\geq}{\;\geqslant\;}                   
\newcommand{\dd}{\,\text{\rm d}}             
\newcommand{\bbN}{{\ensuremath{\mathbb N}} }
\newcommand{\bbP}{{\ensuremath{\mathbb P}} }
\newcommand{\bbR}{{\ensuremath{\mathbb R}} }
\newcommand{\bbZ}{{\ensuremath{\mathbb Z}} }
\newcommand{\ga}{\alpha}
\newcommand{\gd}{\delta}
\newcommand{\gep}{\varepsilon}       
\newcommand{\gs}{\sigma}
\def\captionfont@{\footnotesize}
\def\captionheadfont@{\scshape}
\long\def\@makecaption#1#2{%
  \vspace{2mm}
  \setbox\@tempboxa\vbox{\color@setgroup
    \advance\hsize-6pc\noindent
    \captionfont@\captionheadfont@#1\@xp\@ifnotempty\@xp
        {\@cdr#2\@nil}{.\captionfont@\upshape\enspace#2}%
    \unskip\kern-6pc\par
    \global\setbox\@ne\lastbox\color@endgroup}%
  \ifhbox\@ne 
    \setbox\@ne\hbox{\unhbox\@ne\unskip\unskip\unpenalty\unkern}%
  \fi
  \ifdim\wd\@tempboxa=\z@ 
    \setbox\@ne\hbox to\columnwidth{\hss\kern-6pc\box\@ne\hss}%
  \else 
    \setbox\@ne\vbox{\unvbox\@tempboxa\parskip\z@skip
        \noindent\unhbox\@ne\advance\hsize-6pc\par}%
\fi
  \ifnum\@tempcnta<64 
    \addvspace\abovecaptionskip
    \moveright 3pc\box\@ne
  \else 
    \moveright 3pc\box\@ne
    \nobreak
    \vskip\belowcaptionskip
  \fi
\relax
}
\def\writefig#1 #2 #3 {\rlap{\kern #1 truecm
\raise #2 truecm \hbox{#3}}}
\newsavebox{\@brx}
\newcommand{\llangle}[1][]{\savebox{\@brx}{\(\m@th{#1\langle}\)}%
  \mathopen{\copy\@brx\kern-0.5\wd\@brx\usebox{\@brx}}}
\newcommand{\rrangle}[1][]{\savebox{\@brx}{\(\m@th{#1\rangle}\)}%
  \mathclose{\copy\@brx\kern-0.5\wd\@brx\usebox{\@brx}}}
\newcommand{\dist}{\text{dist}}
\title{Periodicity and longtime diffusion for mean field systems in $\bbR^d$}
\author{Eric Lu\c{c}on}
\address{Universit\'e de Paris, Laboratoire MAP5 (UMR CNRS 8145), 75270 Paris, France \& FP2M, CNRS FR 2036, \url{eric.lucon@u-paris.fr}.
}
\author{Christophe Poquet}
\address{Université Claude Bernard Lyon 1, CNRS UMR 5208, Institut Camille Jordan, F-69622 Villeurbanne, France, \url{poquet@math.univ-lyon1.fr}}
\keywords{Mean-field systems, McKean-Vlasov process, longtime behavior of Markov processes, nonlinear Fokker-Planck equation}
\subjclass[2010]{60K35, 35K55, 35Q84, 37N25, 82C31, 92B20}
\date{\today}
\begin{document}

\maketitle

\begin{abstract}
We study in this paper the longtime behavior of some large but finite populations of interacting stochastic differential equations whose (infinite population) limit Fokker-Planck PDE admits a stable periodic solution. We show that the empirical measure for the population of size $N$ stays close to the periodic solution, but with a random dephasing at the timescale $Nt$ that converges weakly to a Brownian motion with constant drift.
\end{abstract}

\section{Introduction}
\subsection{The particle system}
We are interested in this work in the large time behavior of the following systems of coupled stochastic differential equations on $ \mathbb{ R}^{ d}$ ($d\geq1$)
\begin{equation}
\label{eq:EDS_N}
d X_{i,t}= \left(\gd F(X_{i,t})-K\left(X_{i,t}-\frac{1}{N}\sum_{j=1}^NX_{j,t}\right)\right)\dd t +\sqrt{2} \gs  \dd B_{i,t},\ i=1,\ldots, N,\ t\geq0.
\end{equation}
Here, $N$ is the size of the population and $X_{ i, t}$ represents the state of the individual with index $i$ at time $t$. In \eqref{eq:EDS_N}, the dynamics of $X_{ i}$ is decomposed into some local dynamics part $ \delta F(X_{ i, t}) {\rm d}t$, where $F: \mathbb{ R}^{ d} \to \mathbb{ R}^{ d}$ is a smooth function (we will suppose that $F$ and all its derivatives are bounded) and $ \delta>0$ a scaling parameter, a linear interaction with the rest of the population, governed by a constant interaction matrix $K$, and an additive noise with a constant diffusion matrix $ \sigma$. Here $B_{ i}$, $i=1, \ldots, N$ is a collection of i.i.d. standard Brownian motions on $ \mathbb{ R}^{ d}$. A simplifying hypothesis that we adopt here is to suppose that $K$ and $ \sigma$ are diagonal with positive coefficients (that is both interaction and noise are nondegenerate w.r.t. all components). The system \eqref{eq:EDS_N} is of mean-field type, as the interactions occur on the complete graph: the interaction term in \eqref{eq:EDS_N} forces each $X_{ i}$ to align itself to the empirical mean value of the whole system $\frac{1}{N}\sum_{j=1}^NX_{j,t}$. 

\subsection{The mean-field limit}
As the size of the population goes to infinity, standard propagation of chaos results \cite{sznitman1991topics} shows that the natural limit of the particle system \eqref{eq:EDS_N} may be given in terms of the following nonlinear McKean-Vlasov process
\begin{equation}\label{eq:McKean}
d \bar X_t =\gd F(\bar X_t)\dd t - K\left(\bar X_t- \mathbf{ E}[\bar X_t]\right)\dd t+\sqrt{2}\gs \dd B_t.
\end{equation} 
The nonlinear character of \eqref{eq:McKean} lies in the fact that $\bar X_{ t}$ interacts with its own expectation $ \mathbb{ E} \left[\bar X_{ t}\right]$.
The proximity between \eqref{eq:EDS_N} and \eqref{eq:McKean} as $N\to \infty$ can be easily understood by a standard coupling argument: under mild assumptions on $F$, choosing the same Brownian motions for both \eqref{eq:EDS_N} and $(\bar X_{1}, \ldots, \bar X_{N})$ solving \eqref{eq:McKean},  Gr\"onwall type arguments lead to the following standard propagation of chaos estimate (for any fixed $T>0$):
\begin{equation}
\sup_{ i=1, \ldots, N}\mathbb{ E} \left[ \sup_{ t\in[0, T]}\left\vert X_{ i, t} -\bar X_{ i, t}\right\vert\right] \leq \frac{ C(T)}{ \sqrt{ N}}.
\end{equation}
Note that without further hypotheses on the system, the above constant $C(T)$ is exponential in $T$, so that the above estimate is only relevant for horizon time $T$ that are logarithmic in $N$. 

\subsection{Periodicity and large-time behavior}
At the level of the whole particle system, the previous propagation of chaos estimate boils down to the convergence as $N\to \infty$ of the empirical measure 
\begin{equation}
\label{eq:muNt}
\mu_{ N, t}:= \frac{ 1}{ N}\sum_{j=1}^{ N} \delta_{ X_{ j,t}}
\end{equation}
 to $ \mu_{ t}$, weak solution to the following nonlinear PDE
\begin{equation}\label{eq:PDE non centered}
\partial_t \mu_t =  \nabla\cdot \left(\gs^2 \mu_t\right) +\nabla \cdot \left(K\left(x-\int_{\bbR^d}y \mu_t({\rm d}y)\right) \mu_t\right)-\gd \nabla\cdot \left( F(x) \mu_t\right).
\end{equation}
Here, $ \mu_{ t}$ is the law of the nonlinear process $\bar X_{ t}$ given by \eqref{eq:McKean}. The point of the present paper is to analyse the proximity of \eqref{eq:muNt} w.r.t. its mean-field limit \eqref{eq:PDE non centered} on a time horizon that goes beyond the time scale of order $\log N$. A first step in this direction is of course to understand the behavior as $t\to\infty$ of the mean-field system \eqref{eq:PDE non centered} itself.  The long-time dynamics of \eqref{eq:McKean}-\eqref{eq:PDE non centered} is a longstanding issue in the literature. In particular, the existence of stable equilibria for \eqref{eq:PDE non centered} (that is invariant measures for \eqref{eq:McKean}) has been studied for various choices of dynamics, interaction and regimes of parameters $ \delta, K, \sigma$. A prominent example in this direction concerns the case of granular media equation where $F= - \nabla V$ for some some confining potential $V$ and the interaction is given by a more general potential $W$ (we consider here the case $W$ quadratic). Uniform in time propagation of chaos results have been studied in the literature for this situation, see e.g. \cite{10.1214/12-AOP749,MR1632197,Cattiaux2008,DEGZ2020} for further details and references. Note that most of the mentioned results concern situations where the corresponding particle dynamics \eqref{eq:EDS_N} is reversible. We present in this paper a non-reversible situation where uniform propagation of chaos does not occur.

\medskip

We are concerned in this work with the (intrinsically non-reversible) situation when \eqref{eq:PDE non centered} has a periodic orbit. The emergence of periodicity for mean-field systems is a universal phenomenon, as it reflects a common feature of self-organization in coupled systems. This emergence has been observed in different fields, as chemical reactions (Brusselator model \cite{scheutzow1986periodic}), neuroscience \cite{Giacomin:2012,giacomin2015noise,22657695,MR3392551,ditlevsen2017multi,Lucon:2018qy, Quininao:2020,2018arXiv181100305L,Cerf:2020,Cormier2020} or statistical physics (e.g. spin-flip models \cite{Collet:2015,DaiPra2020}). 
An example of particular interest is given by the FitzHugh-Nagumo model \cite{MR1779040,22657695}, as it is commonly used as a prototype for  excitability in both neuronal models \cite{LINDNER2004321} and physics \cite{PhysRevE.68.036209}. Here excitability refers to the ability for a neuron to emit spikes in the presence of perturbations (due to some noise and/or external input) while this neuron stays at rest in absence of perturbation. Note that for excitable systems a main point of interest is to understand the influence of noise and interaction in the emergence and stability of periodic solutions. It is now well known that some balance has to be found in the quantity of noise and interaction present in the system in order to obtain oscillations (see \cite{LINDNER2004321,Lucon:2018qy,2018arXiv181100305L} for further details).
For the FizHugh-Nagumo case, the long-time dynamics of \eqref{eq:PDE non centered} has been the subject of several previous works, with aim either the existence of equilibria \cite{Mischler2016,Quininao:2020} or of periodic solutions \cite{Lucon:2018qy,2018arXiv181100305L}), depending on the choice of parameters.

We refer here to the companion paper of this work \cite{LP2021a} where, under some hypotheses that we will describe below (see Section~\ref{sec:periodic solution}), the existence of periodic solutions to nonlinear equation such as \eqref{eq:PDE non centered} is proven, together with stability and regularity results of the solution $ \mu_{ t}$ in a neighborhood of the periodic orbit. This result is obtained via a perturbation argument (i.e. we take the parameter $\gd$ in \eqref{eq:EDS_N} small), considering \eqref{eq:PDE non centered} as a slow-fast system (more details on this point will be given in Section~\ref{sec:periodic solution}), relying on normally hyperbolic manifolds arguments \cite{Bates:1998,Bates:2008}. The starting point of the present work will be to assume that \eqref{eq:PDE non centered} possesses a regular periodic solution $\left(\Gamma^\gd_t\right)_{t\geq 0}$ with period $T_{ \delta}>0$.

The main issue of the present work will be to question the proximity of the empirical measure \eqref{eq:muNt} w.r.t. the periodic orbit $ \Gamma^\gd$ of the mean-field PDE \eqref{eq:PDE non centered}. The main result will be that, as long as $ \mu_{ N, 0}$ is initially sufficiently close to $ \Gamma^\gd$, on a time scale proportional to $N$, $ \mu_{ N, tN}$ remains close to $ \Gamma^\gd$ whereas its phase (parameterized by the isochron map $ \Theta^\gd$ defined in a neighborhood of $ \Gamma^\gd$, see Section~\ref{sec:periodic solution} below for precise details) performs a diffusion with constant drift and diffusion coefficient that are explicit in $ \Theta^\gd$. The proof relies on precise proximity estimates of $ \mu_{ N, t}$ around $ \Gamma^\gd$ expressed in terms of weighted Sobolev norms that are adapted to the linear part of system \eqref{eq:EDS_N}. 

\subsection{Related works}
We are interested here in an infinite-dimensional version of a problem that has been widely studied in the literature in the finite dimensional case: the long time behavior of differential equations admitting a stable periodic solution, and perturbed by a Brownian noise of size $\gep$. It is well known that at the timescale $\gep^ {-2}t$ this system stays with high probability close to periodic solution, but with a random dephasing given by a diffusion \cite{Yoshimura2008,GPS2018}. The intuition here is that the stability of the limit cycle prevents the perturbed solution from escaping, but the solution is free to diffuse along the tangent space of the limit cycle. Moreover, when one chooses the isochron map to define the phase of the system (the phase dynamics in the neighborhood of the periodic solution being then simply a rotation with constant speed), the dephasing is given by a Brownian motion with a constant drift.

Note that infinite-dimensional versions of this problem where already considered in the case of the Cahn-Allen SPDE with bi-stable symmetric potential \cite{DMP1995,BBDMP1998}. In this case the dynamics involves a sable manifold of stationary points instead of a limit cycle, and the system is perturbed by a white noise, but the limit dynamics of the phase after re-scaling is also a diffusion. In contrast to these previous works we do not perturb the mean-field PDE \eqref{eq:PDE non centered} by adding a macroscopic noise term, we deal with the {\sl natural} microscopic noise already present in the finite population $(X_{i,t})_{i=1,\ldots,N}$ in \eqref{eq:EDS_N}, which becomes macroscopically apparent on a diffusive time scale: when time is proportional to $N$, the dynamics of the empirical measure $\mu_{N,t}$ defined in \eqref{eq:muNt} can be considered as a noisy perturbation of the PDE \eqref{eq:PDE non centered}. 

This finite-population effects have already been studied in the particular case of the stochastic Kuramoto model, defined as follows \cite{kuramoto2012chemical,acebron2005kuramoto}:
\begin{equation}
\label{eq:Kur}
{\rm d}X_{ i, t} = \delta \omega_{ i} \dd t - \frac{ K}{ N} \sum_{ j=1}^{ N} \sin \left( X_{ j, t}- X_{ i, t}\right) {\rm d}t + \sigma {\rm d}B_{ i, t}.
\end{equation}
Here, each $X_{ i, t}$ is a phase living on the circle $\mathbb{ S}:= \mathbb{ R}/2\pi$ that is subject to noise, interaction and disorder: the intrinsic dynamics of $X_{ i,t}$ reduces to a simple rotation with frequency $ \omega_{ i}\in \mathbb{ R}$. These choices of intrisic dynamics and interaction term make this model invariant by rotation. From a mathematical perspective, \eqref{eq:Kur} has the distinct advantage of being explicitly solvable: one can compute the stationary solutions of the corresponding Fokker-Planck PDE and obtain precise stability estimates concerning the manifold of stationary solutions \cite{GPP2012}. Concerning the long-time dynamics of the empirical measure of \eqref{eq:Kur}, in the non-disordered case ($ \omega_{ i}\equiv 0$, which is the only case when \eqref{eq:Kur} is reversible), it has been shown in \cite{dahms2002long,Bertini:2013aa} that in the supercritical case, that is when the limit PDE admits a non-trivial circle of stationary solutions, the empirical measure of \eqref{eq:Kur} performs a Brownian motion on this curve on a time scale of order $N$. Note that in this case uniform propagation of chaos for the radial dynamics has been recently obtained in \cite{delarue2021uniform}, and that the empirical measure stays also close to this curve for long times in the case of interactions defined by a Erdös-Renyi graph sufficiently dense \cite{Coppini2019}.
Moreover, the presence of nontrivial disorder induces a drift that is observable on a time scale of order $ \sqrt{ N}$ \cite{Lucon:2017}. 

One can see the present paper as a natural continuation of these works to more general systems, that in particular no longer live on the compact $ \mathbb{ S}$ but on the whole $ \mathbb{ R}^{ d}$. In itself, this generalisation is a major complication to the analysis: first, the existence of periodic orbits is no longer intrinsic to the model and therefore requires a proof. We refer to \cite{Lucon:2018qy} and the companion paper \cite{LP2021a} where we give rigorous proof of existence of limit cycles for \eqref{eq:PDE non centered}. Secondly, the fact that the state space is no longer compact poses serious technical issues that were not present in the case of the Kuramoto model, particularly when the term $F$ is unbounded (and this is the case in many interesting models, such as for FitzHugh-Nagumo of Morris-Lecar). We refer to \cite{Lucon:2018qy} where these issues were addressed for the analysis of the existence of periodic solutions for the PDE \eqref{eq:PDE non centered}. Nonetheless, we restrict ourselves in this work to the case where $F$ is bounded as well as all its derivatives: the stability results for the periodic solutions we obtained in \cite{Lucon:2018qy} in the unbounded case are not sufficient to study the long time dynamics of the empirical measure, while for the bounded case we obtained more satisfying results in the companion paper \cite{LP2021a}, relying on the theory of approximately invariant manifolds \cite{Bates:2008}. Note moreover that the existence and regularity of the isochron map is particularly well-covered in the literature in the finite dimension case \cite{guckenheimer1975isochrons,hirsch1977invariant} but similar existence and regularity results in infinite dimension is less known in the literature. One of the purpose of the companion paper \cite{LP2021a} is precisely to obtain the desired existence and regularity estimates concerning the isochron in our case.

\section{Setting and main results}

\subsection{Reformulating the model}
For the analysis of \eqref{eq:EDS_N}, it will be convenient to distinguish between the empirical mean of the system and the re-centered process: define
\begin{align}
Y_{i,t}=X_{i,t}-m_{N,t},\ i=1,\ldots, N
\end{align}
where
\begin{equation}
m_{N,t}=\frac{1}{N}\sum_{j=1}^N X_{j,t},\ t\geq 0.
\end{equation} 
Reformulating \eqref{eq:EDS_N} in terms of $(Y_{ i}, _{ N})$ we obtain equivalently
\begin{equation}
\label{eq:eds_Yi}
\begin{cases}
\dd Y_{i,t}=&\left(\gd\left( F_{m_{N,t}}(Y_{i,t})-\frac{1}{N}\sum_{j=1}^N  F_{m_{N,t}}(Y_{j,t})\right)-K\ Y_{i,t}\right)\dd t\\
&+\sqrt{2} \gs  \dd B_{i,t}-\frac{\sqrt{2}}{N}\gs \sum_{j=1}^N \dd B_{j,t}\\
d m_{N,t} =& \frac{\gd}{N}\sum_{j= 1}^N F_{m_{N,t}}(Y_{j,t})\dd t +\frac{\sqrt{2}}{N}\gs \sum_{j=1}^N \dd B_{j,t}.
\end{cases}
\end{equation}
Therefore, the knowledge of the empirical measure defined in \eqref{eq:muNt} is exactly equivalent to the knowledge of
\begin{equation}
\label{eq:muNt2}
\mu_{N,t}:= \left(p_{N, t}, m_{ N, t}\right),\ t\geq0,
\end{equation}
where
\begin{equation}
p_{ N, t}:= \frac{ 1}{ N}\sum_{k=1}^N \delta_{Y_{i,t}},\ t\geq0, N\geq0.
\end{equation}
Note here that we make a minor abuse of notations in identifying \eqref{eq:muNt} with \eqref{eq:muNt2}. One obtains from the same standard propagation of chaos estimates that, on each time interval $[0,T]$ the couple $\mu_{N,t}$ converges weakly to $\mu_t=(p_t,m_t)$ solution to the system
\begin{equation}\label{eq:syst PDE}
\left\{
\begin{array}{rl}
\partial_t p_t &= \nabla\cdot (\gs^2 \nabla p_t)+\nabla \cdot(p_t Kx) -\gd \nabla\cdot \left(p_t \left( F_{m_t}-\int F_{m_t} \dd p_t\right)\right)\\
\dot m_t &=  \gd\int F_{m_t} \dd p_t 
\end{array}
\right. ,
\end{equation}
with $F_u(x)=F(x+u)$. 
We denote $t\mapsto T^t$ the semi-flow associated to the system \eqref{eq:syst PDE}. We will make hypotheses on $F,K,\gs$ such that this system admits a stable limit cycle $(q^\gd_t,\gamma^\gd_t)_{t\in[0,T_\gd]}$.

\subsection{Main hypotheses and notations}
\subsubsection{Main assumptions}
Throughout the paper, we assume that $F: \mathbb{ R}^{ d}\to \mathbb{ R}^{ d}$ is $ \mathcal{ C}^{ \infty}$ and bounded as well as all its derivatives. Suppose also $ \delta>0$ and that $K$ and $ \sigma$ are diagonal matrices with positive coefficients: $K= diag(k_{ 1}, \ldots, k_{ d})$, $ \sigma= diag(\sigma_{ 1}, \ldots, \sigma_{ d})$ with $k_{ i}>0$ and $ \sigma_{ i}>0$.

\subsubsection{Weighted Sobolev norms}
Let $A$ be a positive matrix. Denote by
\begin{equation}
\left\vert x \right\vert_{ A}= \left( x\cdot A x\right)^{ 1/2}
\end{equation}
the Euclidean norm twisted by $A$. For any $ \theta\in \mathbb{ R}$, define
\begin{equation}
w_{ \theta}(x)= \exp \left( -\frac{ \theta}{ 2} \left\vert x \right\vert_{ K \sigma^{ -2}}^{ 2}\right),
\end{equation}
and denote by $L^{ 2}_{ \theta}$ the $L^{ 2}$-space with weight $ w_{ \theta}$ that is associated with the scalar product
\begin{equation}
\label{eq:norm_Lw}
\left\langle f,g \right\rangle _{ L^{ 2}_{ \theta}}=
 \int_{ \mathbb{ R}^{ d}}  f(x) g(x) w_{ \theta}(x) {\rm d}x.
\end{equation}
For $ \theta>0$, we consider 
the scalar products
\begin{equation}
\label{eq:prod_Hr}
\langle f,g\rangle_{H^r_\theta} = \left\langle f, (1-\cL_\theta^{ \ast})^r g\right\rangle_{L^2_{ \theta}},
\end{equation}
where $\cL_\theta^\ast$ is the operator
\begin{equation}
\label{eq:L_OU}
\cL_{ \theta}^{ \ast} f= \nabla\cdot (\gs^2\nabla f)- \theta Kx \cdot\nabla f,
\end{equation}
and we denote $H^r_{ \theta}$ the completion of the space of smooth function $f$ satisfying $\Vert f\Vert_{H^r_{ \theta}}<\infty$. For $r\in \bbN$ the norm $\Vert f\Vert_{H^r_{  \theta}}$ is equivalent to the norm (see \cite{LP2021a} for a proof)
\begin{equation}
\label{eq:equiv_normHr}
\sqrt{\sum_{(i_1,\ldots,i_d)\in \{0,\ldots,r\}^d, \, i_1+\ldots+i_d\leq r} \left\Vert \partial^{i_1}_{x_1}\ldots \partial^{i_d}_{x_d} f\right\Vert_{L^2_{ \theta}}^2}.
\end{equation}
We define $H^{-r}_\theta$ the dual of $H^r_\theta$.
We rely on a ``pivot" space structure: since the natural injection $H^r_\theta\rightarrow L^2_\theta$ is dense, we have a dense injection $(L^2_\theta)'\rightarrow H^{-r}_\theta$. Using the identification of $(L^2_\theta)'$ with $L^2_{-\theta}$, (which means identifying $\langle\cdot,\cdot\rangle_{(L^2_\theta)'\times L^2_\theta}$ with $\langle\cdot,\cdot\rangle_{L^2}=\langle\cdot,\cdot\rangle$), we obtain, for all $u\in L^2_{-\theta}\subset H^{-r}_\theta$ and all $f\in H^r_\theta$,
\[
\langle u, f\rangle_{H^{-r}_\theta\times H^r_\theta} = \langle u,f\rangle.
\]
Remark moreover that for $u\in H^{-r}_\theta$ and $f\in H^{r+1}_\theta$ we have
\begin{equation}
\left|\langle \nabla u ,f\rangle\right|=\left|\langle u ,\nabla f\rangle\right|\leq C \Vert u\Vert_{H^{-r}_\theta}\Vert f\Vert_{H^{r+1}_\theta},
\end{equation}
so that
\begin{equation}\label{eq: nabla u H'}
\Vert \nabla u\Vert_{ H^{-(r+1)}_\theta}\leq C \Vert u\Vert_{H^{-r}_\theta}.
\end{equation}

The fact that the empirical measure $p_{N,t}$ belongs to $H^{-r}_\theta$ for $r$ sufficiently large is provided by the following Lemma (see Appendix~\ref{app:weighted sobolev} for its proof).
\begin{lemma}
\label{lem:control_delta_x}
Fix $ \theta>0$ and $r> d/2$. For all $ x\in \mathbb{ R}^{ d}$, the linear form $h \mapsto \left\langle \delta_{ x}\, ,\, h\right\rangle= h(x)$ is continuous on $ H^{ r}_{ \theta}$ and we have the following bound: for all $ \eta>0$ there exists a constant $C_{ \eta, r}>0$ such that for all $x\in \mathbb{ R}^{ d}$, $ \left\Vert \delta_{ x} \right\Vert_{H^{-r}_{ \theta}} \leq C_{ \eta, r} \exp \left(\frac{ \theta \left\vert x \right\vert^{ 2}_{ K \sigma^{ -2}}}{ 4- \eta}\right)$.
\end{lemma}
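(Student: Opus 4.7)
The plan is to reduce the weighted estimate to the standard (unweighted) Sobolev embedding $H^{r}(B)\hookrightarrow L^{\infty}(B)$ on a unit ball, and then pay for replacing Lebesgue by the Gaussian-weighted measure $w_{\theta}(y)\dd y$ by a lower bound on $w_{\theta}$ over $B(x,1)$. Since $\left\Vert \delta_{x}\right\Vert_{H^{-r}_{\theta}}$ equals the operator norm of the evaluation map $h\mapsto h(x)$ on $H^{r}_{\theta}$, it is enough to show that
\[
|h(x)|\leq C_{\eta,r}\exp\!\left(\frac{\theta\,|x|^{2}_{K\gs^{-2}}}{4-\eta}\right)\Vert h\Vert_{H^{r}_{\theta}}
\]
for every smooth $h$ with finite norm, which after using the equivalence with the norm \eqref{eq:equiv_normHr} amounts to an inequality in terms of $\sum_{|\alpha|\leq r}\Vert\partial^{\alpha}h\Vert_{L^{2}_{\theta}}$. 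For $r>d/2$ (taken as an integer, since \eqref{eq:equiv_normHr} is stated for integer $r$), the classical Sobolev embedding applied on $B(x,1)$ gives, with a constant independent of $x$ by translation invariance,
\[
|h(x)|^{2}\leq C\sum_{|\alpha|\leq r}\int_{B(x,1)}|\partial^{\alpha}h(y)|^{2}\,\dd y.
\]

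\textbf{Converting to the weighted norm.} To convert each unweighted integral on $B(x,1)$ into an integral against $w_{\theta}\,\dd y$, I bound the integrand by $|\partial^{\alpha}h(y)|^{2}w_{\theta}(y)/\inf_{B(x,1)}w_{\theta}$, so
\[
\sum_{|\alpha|\leq r}\int_{B(x,1)}|\partial^{\alpha}h(y)|^{2}\,\dd y\;\leq\;\frac{1}{\inf_{B(x,1)}w_{\theta}}\;\Vert h\Vert_{H^{r}_{\theta}}^{2},
\]
where I used the equivalence of norms for $H^{r}_{\theta}$. The key elementary estimate is then to lower bound $\inf_{B(x,1)}w_{\theta}$. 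For any $y\in B(x,1)$, writing $\lambda:=\sup_{|z|\leq 1}|z|_{K\gs^{-2}}<\infty$, the triangle inequality gives $|y|_{K\gs^{-2}}^{2}\leq |x|_{K\gs^{-2}}^{2}+2\lambda|x|_{K\gs^{-2}}+\lambda^{2}$. Applying Young's inequality $2\lambda|x|_{K\gs^{-2}}\leq\kappa|x|_{K\gs^{-2}}^{2}+\lambda^{2}/\kappa$ with a free small parameter $\kappa>0$ yields
\[
\inf_{B(x,1)}w_{\theta}\;\geq\;C_{\kappa}\exp\!\left(-\tfrac{\theta(1+\kappa)}{2}\,|x|^{2}_{K\gs^{-2}}\right).
\]

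\textbf{Conclusion and parameters.} Combining the two steps and taking the square root gives
\[
|h(x)|\leq C'_{\kappa,r}\exp\!\left(\tfrac{\theta(1+\kappa)}{4}\,|x|^{2}_{K\gs^{-2}}\right)\Vert h\Vert_{H^{r}_{\theta}}.
\]
The final exponent matches the statement after the change of parameter $\eta=\tfrac{4\kappa}{1+\kappa}$ (so $\tfrac{1+\kappa}{4}=\tfrac{1}{4-\eta}$), which runs over $(0,4)$ as $\kappa$ runs over $(0,\infty)$; in particular any $\eta>0$ small enough is attainable, and the constant $C_{\eta,r}$ blows up as $\eta\downarrow 0$ through the Young factor $e^{\theta\lambda^{2}/(2\kappa)}$. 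No step is delicate; the only point requiring mild care is the Young inequality calibration so that the exponent takes exactly the form $\theta|x|^{2}_{K\gs^{-2}}/(4-\eta)$ asked for in the statement, and the observation that the Sobolev embedding constant on $B(x,1)$ is $x$-independent by translation invariance of the unweighted space $H^{r}(B(x,1))$.
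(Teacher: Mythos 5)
Your proof is correct, and it takes a genuinely different route from the one in the paper. The paper proceeds by a \emph{global} tilting argument: for $h\in H^r_\theta$ it introduces $f_\eta(x)=h(x)\exp\bigl(-\tfrac{\theta|x|^2_{K\sigma^{-2}}}{4-\eta}\bigr)$, verifies that $\|f_\eta\|_{H^r}\le C_{1,\eta}\|h\|_{H^r_\theta}$ in the \emph{unweighted} $H^r$ (the strict inequality $\tfrac{2}{4-\eta}>\tfrac12$ is what lets the Gaussian margin absorb the polynomial factors coming from $\nabla^k w$ when differentiating the tilt), and then invokes the global embedding $H^r\hookrightarrow L^\infty$ to write $|h(x)|=|f_\eta(x)|e^{\theta|x|^2/(4-\eta)}\le C_{2,r}C_{1,\eta}\|h\|_{H^r_\theta}e^{\theta|x|^2/(4-\eta)}$. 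You instead localize: apply the \emph{local} embedding $H^r(B(x,1))\hookrightarrow L^\infty$ with a translation-invariant constant, and pay for replacing $\dd y$ by $w_\theta\dd y$ through an explicit pointwise lower bound on $\inf_{B(x,1)}w_\theta$, with Young's inequality supplying exactly the $(1+\kappa)$ dilation of the exponent that you then reparametrize as $\eta=4\kappa/(1+\kappa)\in(0,4)$. Both arguments give the same sharp exponent and the same blow-up of the constant as $\eta\downarrow0$; your version makes the $\eta$-dependence fully explicit and avoids estimating derivatives of the Gaussian multiplier, at the mild cost of appealing to the norm equivalence \eqref{eq:equiv_normHr} (hence implicitly working with integer $r$, a restriction you correctly flag and which is also the only case used in the paper). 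One cosmetic remark: the lemma's phrase ``for all $\eta>0$'' should implicitly be read as $\eta\in(0,4)$, which is exactly the range your $\kappa\mapsto\eta$ change of variables covers; for $\eta\ge4$ the stated exponential factor would be $\le1$ and the claimed bound could not hold uniformly in $x$.
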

The space $H_{ \theta}^{ -r}$ is the proper functional space for the process $p_{ N, t}$. In order to understand $ \mu_{ N, t}$ given by \eqref{eq:muNt2}, define finally the space
\begin{equation}
\label{eq:Hr}
\mathbf{ H}^{ r}_{ \theta}:= H_{ \theta}^{ r}\times \mathbb{ R}^{ d},
\end{equation}
endowed with the scalar product
\begin{equation}
\left\langle (f,m)\, ,\, (g,m^{ \prime})\right\rangle_{ \mathbf{ H}^{ r}_{ \theta}}:= \left\langle f\, ,\, g\right\rangle_{ H_{ \theta}^{ r}} + m\cdot m^{ \prime},
\end{equation}
and $\mathbf{H}_\theta^{-r}$ the dual of $\mathbf{H}_\theta^{r}$. Relying on the same ``pivot" space structure as above, we have
\begin{equation}
\llangle[\big] (\eta, h);(f, m)\rrangle[\big]_{\mathbf{H}_\theta^{-r}\times\mathbf{H}_\theta^{r}} = \llangle[\big] (\eta, h);(f, m)\rrangle[\big],
\end{equation}
where
\begin{equation}
\llangle[\big] (\eta, h);(f, m)\rrangle[\big]= \langle \eta, f\rangle + h\cdot m.
\end{equation}
It is proved in \cite{LP2021a}, relying on classical arguments \cite{Henry:1981,sell2013dynamics}, that \eqref{eq:syst PDE} admits mild solutions in $\mathbf{H}_\theta^{-r}$.

\subsection{Slow-fast viewpoint and periodic solution for the infinite population system}\label{sec:periodic solution}

In this section, we review briefly the results obtained in the companion paper \cite{LP2021a}. The idea, that was already used in \cite{Lucon:2018qy,2018arXiv181100305L}, to obtain the existence of periodic solutions to \eqref{eq:PDE non centered} is to see \eqref{eq:syst PDE} as a slow-fast system. Indeed, for $\gd=0$, the solution of this system is $(e^{t\cL}p_0,m_0)$, where $\cL p = \nabla\cdot (\gs^2 \nabla p)+\nabla \cdot(p Kx) $ is simply an Ornstein Uhlenbeck operator. It is well known that in this case $e^{t\cL}p_0$ converges exponentially fast to the density of the Gaussian distribution on $\bbR^d$ with mean $0$ and variance $\gs^2 K^{-1}$, that we denote $\rho$. So, recalling \eqref{eq:syst PDE}, when $\gd$ is small, $m_t$ satisfies at first order
\begin{equation}
\dot m_t\approx  \gd \int_{\bbR^d} F(x)q(x-m_t) \dd x.
\end{equation}
With this heuristic in view, in order to obtain a periodic behavior for the PDE \eqref{eq:syst PDE} when $\gd$ is small, we will suppose that the ordinary differential equation
\begin{equation}\label{eq:zt}
\dot z_t = \int_{\bbR^d} F(x)\rho(x-z_t) \dd x = \int_{\bbR^d} F_{z_t}(x)\rho(x) \dd x
\end{equation}
admits a stable periodic solution $(\alpha_t)_{t\in[0,T_\ga]}$. More precisely, we will express the stability hypothesis in terms of Floquet formalism: let us denote $t\mapsto \phi_t$ the flow defined by \eqref{eq:zt}, and $\pi_{u+t,u}$ the principal matrix solution associated to $\ga$, that is defined by $\pi_{u+t,u} n = D \phi_t(\ga_u) \cdot n$, and thus the solution to
\begin{equation}
\partial_ t \pi_{u+t,u} = \left( \int_{\bbR^d} DF_{\ga_{u+t}}(x)\rho(x) \dd x\right) \pi_{u+t,u}.
\end{equation}
We suppose that $\pi_{u+t,u}$ satisfies the decomposition
\begin{equation}
\left|\pi_{u+t,u} P^s_u \right| \leq C_\ga e^{-\lambda_\ga t}\left|n\right| ,\text{ and } c_\ga \left|n\right|\leq   \left|\pi_{u+t,u} P^c_u \right|\leq C_\ga \left|n\right|,
\end{equation}
where $P^c_u$, $P^s_u$ are projection satisfying $P^c_u+P^s_u=I_d$, with $u\mapsto P^c_u$ and  $u\mapsto P^s_u$ are smooth, and $c_\ga$, $C_\ga$ and $\lambda_\ga$ are positive constants. $\pi_{u+t,u}$ characterizes the linearized dynamics around $\ga$, and these hypotheses mean that it is a contraction on a supplementary space of the tangent space to $\ga$.

As an example, as stated in \cite{Lucon:2018qy,LP2021a} the existence of a stable periodic solution for \eqref{eq:zt} occurs in the case of the FitzHugh Nagumo model, defined in dimension $d=2$ by
\begin{equation}\label{def:F}
F(x,y) = \left(x-\frac{x^3}{3}-y,\frac1c\left(x+a-by \right) \right),
\end{equation}
for an accurate choice of parameters. In particular, taking $a=\frac13$, $b=1$ and $c=10$, the local dynamics $\dot x_t = \gd F(x_t)$ has globally attracting fixed-point, while for $\frac{\gs_1^2}{K_1}$ is not too large (for example with value $0.2$) the system defined by \eqref{eq:zt} admits a stable periodic solution.
We refer to \cite{Lucon:2018qy} for a detailed study of the bifurcations in this model when tuning the value of  $\frac{\gs_1^2}{K_1}$ and a discussion on the periodic behaviors induced by the common effect of noise and interaction occuring in this model. Of course the function $F$ defined in \eqref{def:F} is not bounded, and thus cannot be considered as is in this work, but one can consider a smooth cut-off of $F$ such that \eqref{eq:zt} still possesses a stable periodic solution. Take for example $F(x)\psi(\gep|x|)$ with $\gep $ small and $\psi:\bbR^+\rightarrow \bbR^+$ smooth, non-increasing and that satisfies $\psi(t)=1$ for $t\leq 1$ and $\psi(t)=0$ for $t=2$. For $\gep$ small enough, \eqref{eq:zt} admits a stable periodic solution (see \cite{LP2021a} for more details).

With these hypotheses, it is proved in \cite{LP2021a}, relying on the deep approximately invariant manifold results provided by \cite{Bates:2008}, that the PDE \eqref{eq:syst PDE} admits a stable periodic solution. More precisely we have the following Theorem (corresponding to Theorem 1.4 in \cite{LP2021a}), in which the stability of the periodic solution $\Gamma^\gd_t$ is expressed using the linearized semi-group $\Phi_{u+t,u+s}$ and the projections $\Pi^{\gd,c}_u, \Pi^{\gd,s}$, which have the same role as $\pi_{u+t,u}$ and $P^c_u,P^s_u$ respectively for $\ga_t$. 

\begin{theorem}\label{th:Gamma}
Suppose that \eqref{eq:zt} admits a stable periodic solution. Then there exists $r_0>0$ such that for all $r\geq r_0$ and $\theta\in (0,1]$ there exists $\gd_0>0$ such that for all $\gd \in  (0,\gd_0)$ the system \eqref{eq:syst PDE} admits a periodic solution  $\left(\Gamma_{ t}^{ \delta}\right)_{ t\in [0, T_{ \delta}]}:=(q^\gd_t,\gamma^\gd_t)_{t\in [0,T_\gd]}$ in $ \mathbf{ H}^{-r+2}_\theta$ with period $T_\gd>0$. Moreover $q^\gd_t$ is a probability distribution for all $t\geq 0$, and $t\mapsto \partial_t\Gamma^\gd_t$ and $t\mapsto \partial^2_t \Gamma^\gd_t$ are in $C([0,T_\gd),\mathbf{H}^{-r+2}_\theta)$.

Denoting $\cM^\gd=\{\Gamma^\gd_t:\, t\in [0,T_\gd)\}$ and $
\Phi_{u+s,u}(\nu) = D T^s(\Gamma^\gd_u)[\nu]
$, there exists families of projection $\Pi^{\gd,c}_u$ and $\Pi^{\gd,s}_t$ that commute with $\Phi$, i.e. that satisfy
\begin{equation}\label{eq:Phi commutes}
\Pi^{\gd,\iota}_{u+t}\Phi_{u+t,u}=\Phi_{u+t,u}\Pi^{\gd,\iota}_{u},\quad \text{for } \iota = c,s.
\end{equation}
Moreover $\Pi^{\gd,c}_t$ is a projection on the tangent space to $\cM^\gd$ at $\Gamma^\gd_t$, $\Pi^{\gd,c}_t+\Pi^{\gd,s}_t=I_d$, $t\mapsto \Pi^{\gd,c}_t \in C^1([0,T_\gd),\cB(\mathbf{H}^{-r}_\theta))$, and there exist positive constants $c_{\Phi,\gd}$, $C_{\Phi,\gd}$ and $\lambda_{\gd}$ such that 
\begin{equation}
c_{\Phi,\gd}\left\Vert \Pi^{\gd,c}_{u}(\nu)\right\Vert_{\mathbf{H}^{-r}_\theta}\leq \left\Vert \Phi_{u+t,u}\Pi^{\gd,c}_{u}(\nu)\right\Vert_{\mathbf{H}^{-r}_\theta}
\leq C_{\Phi,\gd}\left\Vert \Pi^{\gd,c}_{u}(\nu)\right\Vert_{\mathbf{H}^{-r}_\theta},
\end{equation}
\begin{equation}\label{eq:Phi contracts}
\left\Vert \Phi_{u+t,u}\Pi^{\gd,s}_{u}(\nu)\right\Vert_{\mathbf{H}^{-r}_\theta}\leq C_{\Phi,\gd}\, t^{-\frac{\ga}{2}}e^{-\lambda_{\delta} t}\left\Vert \Pi^{\gd,s}_{u}(\nu)\right\Vert_{\mathbf{H}^{-(r+\ga)}_\theta},
\end{equation}
and
\begin{equation}\label{eq:Phi bounded}
\left\Vert \Phi_{u+t,u}\nu\right\Vert_{\mathbf{H}^{-r}_\theta}\leq C_{\Phi,\gd}\left(1+t^{-\frac{\ga}{2}}\right)e^{-\lambda_{\delta} t}\left\Vert \nu\right\Vert_{\mathbf{H}^{-(r+\ga)}_\theta}.
\end{equation}
\end{theorem}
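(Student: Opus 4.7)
The plan is to exploit the slow–fast structure of \eqref{eq:syst PDE} by viewing it as an $O(\delta)$ perturbation of the decoupled dynamics at $\delta = 0$ and invoking the approximately invariant manifold theorem of \cite{Bates:2008}. When $\delta = 0$ the equations decouple: $\dot m_t = 0$ while $p_t$ evolves under the Ornstein--Uhlenbeck semigroup $e^{t\cL}$, which contracts exponentially in any $\mathbf{H}^{-r}_\theta$ to the stationary Gaussian $\rho$. Consequently $\cN^0 := \{(\rho, m) : m \in \bbR^d\}$ is an invariant manifold of fixed points, diffeomorphic to $\bbR^d$, which is normally hyperbolic: the transverse linearization is $\cL$, whose spectrum on the zero-mean subspace has a uniform gap, while the tangent $m$-direction is neutral. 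Using the norm equivalence \eqref{eq:equiv_normHr} and the divergence bound \eqref{eq: nabla u H'}, I would verify the cone and rate conditions of \cite{Bates:2008} in $\mathbf{H}^{-r}_\theta$ (choosing $r_0$ so that $\delta_x \in \mathbf{H}^{-r}_\theta$, cf.\ Lemma~\ref{lem:control_delta_x}) and conclude that, for $\delta$ small enough, a locally invariant manifold $\cN^\delta \subset \mathbf{H}^{-r+2}_\theta$ persists as a $C^1$-small graph over $\cN^0$.

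Next I would analyse the reduced slow dynamics on $\cN^\delta$. Parameterizing $\cN^0$ by $m$, the nonlinear drift in \eqref{eq:syst PDE} evaluated at the base point $(\rho, m)$ is exactly the right-hand side of \eqref{eq:zt}, so the reduced $m$-dynamics on $\cN^\delta$ is a $C^1$-small perturbation of \eqref{eq:zt} after rescaling time by $\delta$. Since by hypothesis $\alpha$ is a hyperbolic stable periodic orbit of \eqref{eq:zt} with Floquet splitting $P^c_u, P^s_u$, a standard structural stability argument via the Poincar\'e return map produces a nearby stable periodic orbit $(\gamma^\delta_t)$ of the perturbed reduced flow for all $\delta > 0$ small enough. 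Composing with the parameterization of $\cN^\delta$ yields a periodic orbit $\Gamma^\delta_t = (q^\delta_t, \gamma^\delta_t)$ of \eqref{eq:syst PDE} of period $T_\delta$ close to $T_\alpha/\delta$. The fact that $q^\delta_t$ is a probability density is automatic, since the continuity equation \eqref{eq:syst PDE} preserves the cone of probability measures, and the regularity $\partial_t \Gamma^\delta, \partial_t^2 \Gamma^\delta \in C([0,T_\delta), \mathbf{H}^{-r+2}_\theta)$ follows from differentiating the mild formulation of \eqref{eq:syst PDE} and exploiting parabolic smoothing of $e^{t\cL}$.

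For the hyperbolic splitting of $\Phi_{u+t,u}$, the natural candidate for $\Pi^{\delta,c}_u$ is the rank-one projection onto the tangent vector $\partial_t \Gamma^\delta_u$ to $\cM^\delta$, with kernel given by the graph-lift of the stable subspace $P^s_u$ from the reduced Floquet decomposition; one sets $\Pi^{\delta,s}_u := I - \Pi^{\delta,c}_u$. On the tangent direction, $\Phi_{u+t,u}$ acts as the reparameterization $\partial_t \Gamma^\delta_u \mapsto \partial_t \Gamma^\delta_{u+t}$, yielding the two-sided bound on $\|\Phi_{u+t,u}\Pi^{\delta,c}_u\|$. On the stable complement the contraction comes from two combined effects: exponential decay of $e^{t\cL}$ in the transverse density direction and the Floquet contraction in $P^s$. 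The key quantitative input is the OU smoothing estimate
\begin{equation*}
\left\| e^{t\cL} \nu \right\|_{\mathbf{H}^{-r}_\theta} \;\leq\; C \bigl(1 + t^{-\alpha/2}\bigr) e^{-\lambda t} \left\| \nu \right\|_{\mathbf{H}^{-(r+\alpha)}_\theta}
\end{equation*}
on the zero-mean subspace, which, combined via Duhamel iteration with a Gronwall control of the first-order perturbation $\delta\,\nabla\cdot\bigl(p(F_m - \textstyle\int F_m \, dp)\bigr)$ on short time windows, produces the $t^{-\alpha/2}$ factor in \eqref{eq:Phi contracts}--\eqref{eq:Phi bounded}.

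The main obstacle, I expect, is the rigorous verification of the Bates--Lu--Zeng hypotheses in the scale of weighted negative Sobolev spaces $\mathbf{H}^{-r}_\theta$: one must ensure that the parabolic smoothing afforded by $\cL$ is strong enough to tame the first-order nonlinear term uniformly on a long invariant neighbourhood of $\cN^0$, and that the cone and rate conditions propagate from the $\delta = 0$ reference despite the fact that the perturbation introduces genuine coupling between slow and fast variables. A secondary technical point is the smooth $u$-dependence of $\Pi^{\delta,c}_u$ required for the commutation identity \eqref{eq:Phi commutes}, which demands $C^1$-tracking of the perturbed Floquet decomposition along $\gamma^\delta$ and its transport through the manifold parameterization.
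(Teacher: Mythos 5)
The paper does not prove Theorem~\ref{th:Gamma} here but states it as Theorem~1.4 of the companion work \cite{LP2021a}, for which Section~\ref{sec:periodic solution} outlines precisely the strategy you describe: slow--fast decomposition at $\delta=0$, normal hyperbolicity of the Gaussian slow manifold $\{(\rho,m):m\in\bbR^d\}$, Bates--Lu--Zeng persistence, reduction to the Floquet-hyperbolic cycle $\alpha$ of \eqref{eq:zt}, and OU parabolic smoothing to supply the $t^{-\alpha/2}$ singular factor in the contraction bounds. Your proposal follows essentially the same route as the paper's cited argument.
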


Moreover in \cite{LP2021a} the existence of an isochron map is also proved, as stated in the following Theorem (corresponding to Theorem 1.6 in \cite{LP2021a}). The $ \mathcal{ C}^2$-regularity of this isochron map will be useful when applying Itô's Lemma at the end of this paper.

\begin{theorem}\label{th:Theta}
Under the same hypotheses as in Theorem~\ref{th:Gamma}, and for $r$ and $\gd$ as in Theorem~\ref{th:Gamma}, there exist a neighborhood $\cW^\gd\in \mathbf{H}^{-r}_{\theta}$ of $\cM^\gd$ and a $ \mathcal{ C}^2$-mapping $\Theta^\gd:\cW^\gd\rightarrow \bbR/T_\gd\bbZ$ that satisfies, for all $\mu \in \cW^\gd$, denoting $\mu_t=T^t \mu$,
\begin{equation}
\Theta^\gd(\mu_t)=\Theta^\gd(\mu)+t \quad \text{mod } T_\gd,
\end{equation}
and there exists a positive constant $C_{\Theta,\gd}$ such that, for all $\mu\in \cW^\gd$ with $\mu_t=T^t\mu$,
\begin{equation}
\left\Vert \mu_t - \Gamma^\gd_{\Theta^\gd(\mu)+t}\right\Vert_{\mathbf{H}^{-r}_\theta}\leq C_{\Theta,\gd} e^{-\lambda_\gd t}\left\Vert \mu - \Gamma^\gd_{\Theta^\gd(\mu)}\right\Vert_{\mathbf{H}^{-r}_\theta}.
\end{equation}
Moreover $\Theta^\gd$ satisfies, for all $\mu \in \cW^\gd$,
\begin{equation}
\left\Vert D^2\Theta^\gd(\mu) - D^2\Theta^\gd\left(\Gamma^\gd_{\Theta^\gd(\mu)}\right)\right\Vert_{\cB\cL (\mathbf{H}^{-r}_\theta)}\leq C_{\Theta,\gd} \left\Vert \mu - \Gamma^\gd_{\Theta^\gd(\mu)}\right\Vert_{\mathbf{H}^{-r}_\theta}.
\end{equation}
\end{theorem}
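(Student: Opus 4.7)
The plan is to construct $\Theta^\gd$ as the phase coordinate of the \emph{stable foliation} of the limit cycle $\cM^\gd$, using the spectral decomposition $\Pi^{\gd,c}_u+\Pi^{\gd,s}_u=I$ supplied by Theorem~\ref{th:Gamma}. For each $u\in[0,T_\gd)$ I would define the \emph{isochron leaf} $\mathcal{I}^\gd_u$ to be a $C^2$ codimension-one submanifold of $\mathbf{H}^{-r}_\theta$ passing through $\Gamma^\gd_u$, tangent to the stable subspace $E^s_u:=\mathrm{Range}(\Pi^{\gd,s}_u)$ there, and characterised by the asymptotic phase property: $\mu\in \mathcal{I}^\gd_u$ iff $\|T^t\mu-\Gamma^\gd_{u+t}\|_{\mathbf{H}^{-r}_\theta}\to 0$ exponentially as $t\to\infty$. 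Then $\Theta^\gd(\mu):=u$ whenever $\mu\in \mathcal{I}^\gd_u$. The isochron identity $\Theta^\gd(T^t\mu)=\Theta^\gd(\mu)+t$ and the exponential convergence estimate come for free from the forward invariance $T^t(\mathcal{I}^\gd_u)=\mathcal{I}^\gd_{u+t}$, which is built into the construction, together with the contraction \eqref{eq:Phi contracts}.

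To build the leaves, I would fix $u_0\in[0,T_\gd)$ and view $\Gamma^\gd_{u_0}$ as a fixed point of the time-$T_\gd$ map $P:=T^{T_\gd}$. Its linearisation $DP(\Gamma^\gd_{u_0})=\Phi_{u_0+T_\gd,u_0}$ has, by Theorem~\ref{th:Gamma}, spectrum contained in a disk of radius at most $C_{\Phi,\gd}e^{-\lambda_\gd T_\gd}$ on $E^s_{u_0}$ and unit spectrum on the one-dimensional centre subspace $E^c_{u_0}:=\mathrm{Range}(\Pi^{\gd,c}_{u_0})$. I would then invoke a $C^2$ local stable manifold theorem for smoothing semiflows in the spirit of \cite{Bates:2008,Henry:1981} — exactly the framework used in \cite{LP2021a} to produce $\cM^\gd$ itself — to obtain a local $C^2$ graph $\Psi_{u_0}:B_{E^s_{u_0}}(0,\gep)\to E^c_{u_0}$ with $\Psi_{u_0}(0)=0$ and $D\Psi_{u_0}(0)=0$, whose graph coincides with $\mathcal{I}^\gd_{u_0}$ near $\Gamma^\gd_{u_0}$. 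I would then transport by the semiflow, $\mathcal{I}^\gd_u:=T^{u-u_0}(\mathcal{I}^\gd_{u_0})$, and patch across $u$ along the cycle. The map $(u,v)\mapsto T^{u-u_0}\bigl(\Gamma^\gd_{u_0}+v+\Psi_{u_0}(v)\bigr)$ from $\bbR/T_\gd\bbZ\times B_{E^s_{u_0}}(0,\gep)$ into $\mathbf{H}^{-r}_\theta$ is then a local $C^2$-diffeomorphism onto a tubular neighbourhood $\cW^\gd$ of $\cM^\gd$, its differential at $v=0$ being bijective by transversality $E^c_u\oplus E^s_u=\mathbf{H}^{-r}_\theta$; inverting it yields $\Theta^\gd\in C^2(\cW^\gd,\bbR/T_\gd\bbZ)$.

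The exponential estimate follows from applying \eqref{eq:Phi contracts} to the displacement $\mu-\Gamma^\gd_{\Theta^\gd(\mu)}$, which lies in $E^s_{\Theta^\gd(\mu)}$ to leading order thanks to the tangency of $\Psi_{u_0}$ at zero; the quadratic correction from the graph is absorbed into the constant. The second-order Lipschitz bound $\|D^2\Theta^\gd(\mu)-D^2\Theta^\gd(\Gamma^\gd_{\Theta^\gd(\mu)})\|\leq C_{\Theta,\gd}\|\mu-\Gamma^\gd_{\Theta^\gd(\mu)}\|$ then follows by differentiating twice the fixed-point equation defining $\Psi_{u_0}$ and using uniform contraction of the graph transform on the second differential, together with the smoothness of $F$; this is a standard output of $C^{2,1}$ invariant-manifold theory whenever the semiflow is $C^3$, which holds here since $F\in C^\infty$. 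I expect the main obstacle to be the local stable-manifold construction above: because the semiflow $T^t$ is only smoothing, with the regularity loss $\ga$ visible in \eqref{eq:Phi contracts}, $DP$ is not bounded in the usual sense on a single space $\mathbf{H}^{-r}_\theta$, so the stable manifold theorem must be deployed carefully on the scale $(\mathbf{H}^{-r}_\theta)_r$, tracking the regularity loss at each iterate of the graph transform and absorbing it through the uniform exponential decay. Fortunately, \cite{LP2021a} has already developed exactly this parabolic-type machinery to construct $\Gamma^\gd$ and its spectral projections, so the same toolbox should directly produce the isochron foliation with the claimed regularity.
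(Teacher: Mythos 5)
The paper you were given does not prove Theorem~\ref{th:Theta}: the text immediately preceding the statement says it is reproduced verbatim from the companion paper \cite{LP2021a}, where it is Theorem~1.6. There is therefore no proof in this manuscript against which to compare your attempt; the theorem is imported as a black box and its conclusions are consumed later, in Section~\ref{sec:close to M} and in the proof of Theorem~\ref{th:main}.

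Your sketch is nevertheless a plausible roadmap and is in line with the framework the paper itself points to: constructing the isochron as the $C^2$ stable foliation of the cycle via a stable-manifold theorem for the time-$T_\gd$ map, run in the parabolic/smoothing setting of \cite{Bates:2008} which underlies \cite{LP2021a}, with the regularity loss visible in \eqref{eq:Phi contracts} correctly flagged as the central obstruction. Be aware, though, that the two steps you dispatch in a line are exactly where the companion paper's work lies. First, the Lipschitz estimate on $D^2\Theta^\gd$ cannot simply be outsourced to ``$C^{2,1}$ invariant-manifold theory'': because each application of the linearization changes regularity class (the $t^{-\ga/2}$ factor), the graph transform must be iterated at the level of second differentials on a scale of spaces, and the contraction constant must be shown to persist there. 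Second, since $T^t$ is a one-sided semiflow, pushing $\mathcal{I}^\gd_{u_0}$ forward produces leaves only for $u\in[u_0,u_0+T_\gd)$; one still needs to verify that the resulting local chart $(u,v)\mapsto T^{u-u_0}(\Gamma^\gd_{u_0}+v+\Psi_{u_0}(v))$ glues $C^2$-smoothly across $u_0+T_\gd$ and is a genuine diffeomorphism onto a tubular neighbourhood of $\cM^\gd$, which requires an argument rather than an appeal to transversality at $v=0$ alone. Neither of these is a wrong turn, but they are the actual content of the proof, and your write-up should be read as a plan, not a substitute for \cite{LP2021a}.
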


To simplify the notations we will denote $\Gamma_t=\Gamma^\gd_t$ and $\Theta=\Theta^\gd$ in the rest of the article.

\subsection{Main result}
\label{sec:main_result}

We fix a time $t_f>0$ and we will be interested in the behavior of the solution to \eqref{eq:syst PDE} up to time $Nt_f$. Let us fix some constants
\begin{equation}\label{hyp gamma xi}
0 <\gamma<1, \quad 0<\xi <\frac12.
\end{equation}
Consider now the integer $m$ defined as
\begin{equation}\label{hyp m}
m= \left\lfloor \max\left(\frac{1}{2\xi},4\right)\right\rfloor +1,
\end{equation}
and fix
\begin{equation}\label{hyp:theta}
0 <\theta < \frac{3\gamma}{4m}
\end{equation}
Consider finally $r$ satisfying
\begin{equation}\label{hyp:r}
r \geq 6+ \max \left(\frac{ d}{ 2}, r_{ 0}\right),
\end{equation}
where $r_0$ is given in Theorem~\ref{th:Gamma}. Let us comment on these parameters: an important step in the following will be to show that the empirical measure $ \mu_{ N, t}$ remains at a distance at most $ N^{ - \frac{ 1}{ 2} + \xi}$ from the invariant manifold defined by $ \Gamma$. A crucial tool for this will be to control the moments of order $m$ (given by \eqref{hyp m}) of $ \mu_{ N}$ in $ \mathbf{ H}_{ \theta}^{ -r}$ for $ \theta$ and $r$ given by \eqref{hyp:theta} and \eqref{hyp:r} respectively.

Define the Gaussian density
\begin{equation}
g_N (x)= \frac{1}{(2\pi)^\frac{d}{2}\left(1-\frac1N\right)^d \sqrt{\det(K^{-1}\gs^2)}}w_{\frac{N}{N-1}}(x).
\end{equation}
We can now state the main result of this paper.

\begin{theorem}\label{th:main}
Suppose that the hypotheses of Theorem~\ref{th:Gamma} are satisfied, and let $(r, \gamma, \theta)$ be fixed as in \eqref{hyp gamma xi}, \eqref{hyp:r} and \eqref{hyp:theta}. There exists a $\gd_1\in (0,\gd_0)$ such that for all $\gd\in (0,\gd_1)$ the following holds: if we suppose that
\begin{equation}
\label{hyp:control_pN0}
\sup_{N\geq 1} \mathbf{E}\left[\langle p_{N,0}, w_{-\gamma}\rangle \right] <\infty,
\end{equation}
that there exists a constant $\kappa_0$ such that
\begin{equation}
\mathbf{P}\left(\Vert p_{N,0}-g_N\Vert_{H^{-r+2}_\theta}\leq \kappa_0 \right) \underset{N\rightarrow\infty}{\longrightarrow} 1,
\end{equation}
and that there exists $u_0$ such that for all $\gep>0$,
\begin{equation}
\bbP\left( \left\Vert \mu_{N,0}-\Gamma_{u_0} \right\Vert_{\mathbf{H}^{-r}_\theta}\leq \gep\right)\underset{N\rightarrow\infty}{\longrightarrow} 1,
\end{equation}
then for all $\gep>0$ we have
\begin{equation}
\bbP\left(\sup_{t\in [0,t_f]} \left\Vert \mu_{N,N t}-\Gamma_{u_{0}+Nt+v_{N,t}} \right\Vert_{\mathbf{H}^{-r}_\theta}\leq \gep\right)\underset{N\rightarrow\infty}{\longrightarrow} 1,
\end{equation}
where the random process $v_{N,t}$ satisfies $v_{N,0}=0$ and converges weakly to $v_t=bt+a^2 w_t$, where $w$ is a standard Brownian motion and $b$ and $a$ are constant coefficients that depend on $\Gamma$ and the first two derivatives of $\Theta$ taken on $(\Gamma_t)_{u\in [0,T_\gd]}$.
\end{theorem}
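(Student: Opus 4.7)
The plan is to combine three ingredients: (i) a semimartingale decomposition of $\mu_{N,t}$ viewed as an $\mathbf{H}^{-r}_\theta$-valued process, (ii) a ``tube'' estimate ensuring that $\mu_{N,Nt}$ stays within a neighbourhood of $\cM^\gd$ of size $N^{-1/2+\xi}$, and (iii) an application of It\^o's formula to the $\mathcal{C}^2$ isochron map $\Theta$, chosen precisely because it turns the deterministic flow along $\cM^\gd$ into a simple shift. First, from \eqref{eq:eds_Yi}, I would derive the semimartingale decomposition of $\langle \mu_{N,t}, (f,h)\rangle$ for every test pair $(f,h)\in \mathbf{H}^{r}_\theta$ into the drift dictated by \eqref{eq:syst PDE} plus a martingale $M^{N,f,h}_t$ whose bracket is of order $1/N$. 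Together with Lemma~\ref{lem:control_delta_x} and the dissipative structure of $\cL$, the hypothesis \eqref{hyp:control_pN0} yields uniform $m$-th moment bounds on $\Vert \mu_{N,t}\Vert_{\mathbf{H}^{-r}_\theta}$ on $[0,Nt_f]$; the choice of $m$ in \eqref{hyp m} is dictated by the number of moments needed in the last step, while \eqref{hyp:theta} tunes the weight $w_\theta$ to the Gaussian decay of each $Y_{i,t}$.

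Second, I would establish the tube estimate: with probability tending to $1$, $\mu_{N,Nt}\in \cW^\gd$ for every $t\in[0,t_f]$ and
\[
\sup_{t\in [0,t_f]} \Vert \mu_{N,Nt} - \Gamma_{\Theta(\mu_{N,Nt})}\Vert_{\mathbf{H}^{-r}_\theta}\leq N^{-1/2+\xi}.
\]
The argument is a bootstrap on the mild formulation of the deviation $\nu_{N,t}:=\mu_{N,t}-\Gamma_{\Theta(\mu_{N,t})}$, driven by the linearised semigroup $\Phi_{u+t,u}$. Splitting $\nu_{N,t}$ along $\Pi^{\gd,c}$ and $\Pi^{\gd,s}$, the transverse part is contracted by \eqref{eq:Phi contracts} while the tangential part is absorbed into the phase $\Theta(\mu_{N,t})$ by construction; this reduces matters to bounding the stochastic convolution $\int_0^t \Phi_{t,s}\Pi^{\gd,s}\,dM^N_s$. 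Burkholder--Davis--Gundy combined with the smoothing gain in \eqref{eq:Phi contracts} and the bound \eqref{eq: nabla u H'} controls this term by an expression of order $N^{-1/2+o(1)}$, and iterating with a stopping-time argument keeps the quadratic remainder coming from the Taylor expansion of $F_{m_t}$ around $\gamma^\gd_{\Theta(\mu_{N,t})}$ under control on the whole interval.

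Third, on the event where the tube estimate holds, I would apply It\^o's formula to $\Theta(\mu_{N,t})$. Since $\Theta\circ T^{t}=\Theta+t$ on $\cW^\gd$, the drift coming from the deterministic PDE dynamics of \eqref{eq:syst PDE} reduces to $dt$, and
\[
d\Theta(\mu_{N,t}) = dt + \langle D\Theta(\mu_{N,t}), dM^N_t\rangle + \tfrac{1}{2}\Tr\bigl(D^2\Theta(\mu_{N,t})\,d\langle M^N\rangle_t\bigr),
\]
where $M^N_t$ is the martingale part of $\mu_{N,t}$. Setting $v_{N,t}:=\Theta(\mu_{N,Nt})-\Theta(\mu_{N,0})-Nt$ and changing time variable to $Nu$, the tube estimate together with the Lipschitz continuity of $D^2\Theta$ from Theorem~\ref{th:Theta} allows one to replace $D^2\Theta(\mu_{N,Nu})$ by $D^2\Theta(\Gamma_{\Theta(\mu_{N,Nu})})$ up to a vanishing error; Birkhoff-type averaging over one period of $\Gamma$ then produces the constant drift $b$, while the quadratic variation of the rescaled martingale part converges to $a^2 t$. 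Tightness follows from the moment bounds and the limit is identified via the martingale problem for a Brownian motion with drift.

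The main obstacle is the tube estimate on the diffusive timescale $Nt_f$: naive propagation-of-chaos bounds collapse after times of order $\log N$, so the crucial input is the exponential contraction \eqref{eq:Phi contracts} of the linearised semigroup transverse to $\cM^\gd$, combined with the fact that motion along the centre direction carries no dissipation but is entirely captured by the phase $\Theta(\mu_{N,t})$. The coupling between $p_{N,t}$ and $m_{N,t}$ in \eqref{eq:eds_Yi} and the quadratic remainder coming from the nonlinear drift $\int F_{m}\,dp$ make the mild formulation genuinely nonlinear, and closing the bootstrap requires the specific calibration of $\gamma$, $\xi$, $m$, $\theta$, $r$ in \eqref{hyp gamma xi}--\eqref{hyp:r}.
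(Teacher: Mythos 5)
Your three-step skeleton — uniform a priori moment bounds, a tube estimate of order $N^{-1/2+\xi}$ around $\cM^\gd$ on $[0,Nt_f]$, then It\^o's formula for the $\mathcal C^2$ isochron $\Theta$ followed by period-averaging — matches the paper's overall architecture (Propositions~\ref{prop: bound mu1t} and~\ref{prop:closeness to M}, then Section~5), and your Step~3 is essentially the paper's argument: expand $\Theta(\mu_{N,t})$, use $\Theta\circ T^t=\Theta+t$ to kill the leading drift, replace $D^2\Theta(\mu_{N,t})$ by $D^2\Theta(\Gamma)$ via the tube estimate and the Lipschitz bound in Theorem~\ref{th:Theta}, and identify the limit of drift and bracket by averaging over one period.

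There is, however, a genuine gap in your Step~2. You propose to run the bootstrap on $\nu_{N,t}:=\mu_{N,t}-\Gamma_{\Theta(\mu_{N,t})}$, claiming that ``the tangential part is absorbed into the phase $\Theta(\mu_{N,t})$ by construction.'' That is not a property of the isochron. The map $\Theta$ is defined by the asymptotic flow relation $\Theta(T^t\mu)=\Theta(\mu)+t$, not by a pointwise algebraic condition on the deviation; in general $\Pi^{\gd,c}\bigl(\mu-\Gamma_{\Theta(\mu)}\bigr)\ne 0$, so the linearised semigroup $\Phi$ does \emph{not} contract $\nu_{N,t}$ — it is neutral on the center direction, and your iteration would carry a non-decaying tangential residual you cannot discard. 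Moreover, writing a mild formulation against $\Phi$ around $\Gamma_{\Theta(\mu_{N,t})}$ is not well defined as you state it, since the reference phase $\Theta(\mu_{N,t})$ is itself a semimartingale adapted to the same filtration. The paper resolves both issues at once by introducing a \emph{second} phase: the projection $\mathrm{proj}(\mu)$ (the lemma preceding \eqref{hyp:T 2}), which is engineered precisely so that $\mu-\Gamma_{\mathrm{proj}(\mu)}$ lies in the stable subspace, hence is contracted by \eqref{eq:Phi contracts}; and this phase is \emph{frozen} as $\alpha_n=\mathrm{proj}(\mu_{N,nT})$ on each block $[nT,(n+1)T]$, so that the reference $\Gamma_{\alpha_n+t}$ is conditionally deterministic and the mild equation \eqref{eq:mild_nuN} makes sense. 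Only at the end is the estimate converted to the isochron phase $\Theta(\mu_{N,nT})$, which is then what enters Step~3. This ``two phases'' device — explicitly announced in the Structure of the paper paragraph and borrowed from \cite{GPS2018} — is the technical hinge your proposal omits, and as written your bootstrap does not close.
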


\begin{rem}
\begin{enumerate}
\item Explicit formula for $b$ and $a^2$ are given in \eqref{eq: formula b} and \eqref{eq: formula a2} respectively.
\item For simplicity, the result of Theorem~\ref{th:main} is proved for initial conditions close to a point $\Gamma^\gd_{u_0}$ of the stable periodic solution, but as it was done in \cite{Bertini:2013aa,Lucon:2017} one could start close to a point lying in the basin of attraction of the periodic solution, and show that the empirical measures first reaches a neighborhood of size $\gep$ of the periodic solution (after a time interval of length independent from $N$). 
\end{enumerate}
\end{rem}

\subsection{Structure of the paper}

We will first prove in Section~\ref{sec:first bound} a uniform bound for $\left\Vert p_{N,t}\right\Vert_{H_\theta^{-r+2}}$. Then in Section~\ref{sec:close to M} we will show  that after a time of order $\log N$ the empirical measure $\mu_{N,t}$ stay with high probability at most at a distance of order $N^{-\frac12+\xi}$.
Due to a lack of uniform in the time estimates, we will often work with time steps of length $T=kT_\gd$ with $k$ some integer large enough (see the hypotheses \eqref{hyp:T 1} and \eqref{hyp:T 2}). Depending on our objective we will rely on two different definitions of the phase, as it was already done in \cite{GPS2018} for systems in finite dimension: to prove that the empirical measure is close to periodic solution $\Gamma$ we will consider phases adapted to the stability result \eqref{eq:Phi contracts} (see Section~\ref{sec:close to M}), while to define $v_{N,t}$ we will rely on the isochron map given by Theorem~\ref{th:Theta}.

\section{A first bound}\label{sec:first bound}
The aim of this section is to prove the following bound:
\begin{proposition}\label{prop: bound mu1t}
Let $(r, \gamma, \theta)$ be fixed as in \eqref{hyp gamma xi}, \eqref{hyp:r} and \eqref{hyp:theta}. Under \eqref{hyp:control_pN0}, there exists a $\gd_1>0$ such that the following holds: for $\gd\in (0,\gd_1)$, suppose that
\begin{equation}
\sup_{N\geq 1} \mathbf{E}\left[\langle p_{N,0}, w_{-\gamma}\rangle \right] <\infty,
\end{equation}
and that there exists a constant $\kappa_0>0$ such that
\begin{equation}
\mathbf{P}\left(\left\Vert p_{N,0}-g_N\right\Vert_{H_\theta^{-r+2}}\leq \kappa_0\right)\underset{N\rightarrow\infty}{\longrightarrow }1,
\end{equation}
then there exists $\kappa_1>0$ such that
\begin{equation}
\label{eq:uniform_control_pN }
\mathbf{P}\left(\sup_{0\leq t\leq Nt_f} \left\Vert p_{N,t}- g_{ N}\right\Vert_{H_\theta^{-r+2}}\leq \kappa_1\right)\underset{N\rightarrow\infty}{\longrightarrow }1.
\end{equation}
\end{proposition}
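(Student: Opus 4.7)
The strategy is to treat $\eta_{N,t}:=p_{N,t}-g_N$ as an $H_\theta^{-r+2}$-valued perturbation of the Ornstein-Uhlenbeck dynamics (for which $g_N$ is essentially the invariant density) and to exploit the spectral gap of this OU semigroup to control $\eta_{N,t}$ uniformly on $[0,Nt_f]$. Applying It\^o's formula to $\langle p_{N,t},h\rangle$ for test functions $h$ in $H_\theta^{r-2}$ and using the first line of \eqref{eq:eds_Yi}, I first derive a stochastic evolution of the form $\dd\eta_{N,t}=L^\ast \eta_{N,t}\,\dd t+\gd\, R(\mu_{N,t})\,\dd t+\dd M_{N,t}$, where $L^\ast p=\nabla\cdot(\gs^2\nabla p)+\nabla\cdot(pKx)$ is the OU operator adapted to $g_N$, the nonlinear drift $R(\mu_{N,t})=-\nabla\cdot\bigl(p_{N,t}(F_{m_{N,t}}-\langle F_{m_{N,t}},p_{N,t}\rangle)\bigr)$ is controlled via the boundedness of $F$ together with \eqref{eq: nabla u H'}, and $M_{N,t}$ is a martingale whose bracket tested against a smooth $h$ is of order $\frac{1}{N}\langle p_{N,s},|\gs\nabla h|^2\rangle\,\dd s$.

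Duhamel's formula then gives
\[
\eta_{N,t}=e^{tL^\ast}\eta_{N,0}+\gd\int_0^t e^{(t-s)L^\ast}R(\mu_{N,s})\,\dd s+\int_0^t e^{(t-s)L^\ast}\,\dd M_{N,s},
\]
to which I would apply three ingredients. First, the spectral gap of the OU semigroup in $H_\theta^{-r+2}$, giving $\bigl\Vert e^{tL^\ast}f\bigr\Vert_{H_\theta^{-r+2}}\leq Ce^{-\lambda t}\Vert f\Vert_{H_\theta^{-r+2}}$ for mean-zero $f$ (which holds for $\eta_{N,t}$ since both $p_{N,t}$ and $g_N$ are probability densities). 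Second, a uniform-in-time propagation of the exponential moment $\mathbf{E}[\langle p_{N,t},w_{-\gamma}\rangle]\leq C$ starting from \eqref{hyp:control_pN0}, obtained by a direct It\^o computation on $\frac{1}{N}\sum_i w_{-\gamma}(Y_{i,t})$: for $\gd$ sufficiently small, the dissipative part of the dynamics dominates the bounded contribution of $F$. Third, Lemma~\ref{lem:control_delta_x}, which together with the moment bound controls both the martingale bracket and $\Vert p_{N,t}\Vert_{H_\theta^{-r+2}}$ in expectation, uniformly on $[0,Nt_f]$.

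Combining these ingredients with BDG applied to the stochastic convolution yields, for each fixed $t\in[0,Nt_f]$, a bound of the form $\mathbf{E}\bigl[\Vert\eta_{N,t}\Vert^2_{H_\theta^{-r+2}}\bigr]\leq C\bigl(e^{-2\lambda t}\kappa_0^2+\gd^2+\tfrac{1}{N}\bigr)$, which is uniformly bounded in $t$. To upgrade this pointwise-in-$t$ estimate into a supremum on the long interval $[0,Nt_f]$, I would partition the time interval into $O(N)$ windows of fixed length $T$, apply BDG on each window conditionally on its starting value, and aggregate via a union bound, the exponential contraction of $e^{tL^\ast}$ across windows preventing the accumulation of error from one window to the next. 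Choosing $\kappa_1$ sufficiently large and $\gd_1$ sufficiently small then yields the claimed high-probability bound.

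The main obstacle is precisely this last step: one must balance, on a time horizon proportional to $N$, the cumulative noise (of total intensity of order $t_f$) against the contraction of the OU semigroup, inside the negative-index weighted Sobolev norm $H_\theta^{-r+2}$. The technical delicacy is that the martingale increments involve Dirac masses at the particles $Y_{i,t}$, which are controlled in this norm only through Lemma~\ref{lem:control_delta_x} via an exponential weight in $|Y_{i,t}|^2_{K\gs^{-2}}$; the tension between this weight and $w_\theta$ defining the norm is precisely what makes the smallness condition \eqref{hyp:theta} on $\theta$ and the exponential moment hypothesis \eqref{hyp:control_pN0} jointly necessary.
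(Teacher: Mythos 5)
Your strategy is the same as the paper's: recenter around $g_N$, write a Duhamel (mild) formulation with respect to an Ornstein--Uhlenbeck semigroup, propagate an exponential moment of $w_{-\gamma}$ to control the Dirac masses via Lemma~\ref{lem:control_delta_x}, control the noise with BDG, and aggregate over $O(N)$ fixed-length windows using the spectral gap to prevent accumulation of error. One small point: the correct linear operator is $\cL_N$ (with the $(1-\tfrac1N)$ prefactor in front of the diffusion), for which $g_N$ is \emph{exactly} invariant; if you instead use the full $L^\ast$, a further $\tfrac1N\nabla\cdot(\sigma^2\nabla p_{N,t})$ term appears in the drift, which must be tracked.

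There is however a genuine gap in the probabilistic bookkeeping. You write an $L^2$ bound $\mathbf{E}[\Vert \eta_{N,t}\Vert^2]\leq C$ and then propose a union bound over $O(N)$ windows. A union bound converges only if the per-window failure probability is $o(1/N)$; a second-moment (Chebyshev) estimate gives at best $O(1/N)$ per window from noise of size $O(1/\sqrt{N})$, so the sum over windows does not vanish. The paper avoids this by proving a $2m$-th moment H\"older estimate on the stochastic convolution increments, $\mathbf{E}\bigl[\Vert V^{(n)}_{N,t}-V^{(n)}_{N,s}\Vert^{2m}_{H^{-r+2}_\theta}\bigr]\leq \kappa_3 N^{-m}(t-s)^{m/2}$ (Lemma~\ref{lem:bound Vnt-Vns}), then upgrading to a per-window supremum via the Garsia--Rodemich--Rumsey inequality (Lemma~\ref{lem:GRR}), yielding a failure probability of order $N^{-2\xi m}$ per window. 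This is precisely why $m$ is pinned down by \eqref{hyp m}: one needs $2\xi m>1$ so that $n_f N^{-2\xi m}\to 0$. The choice of $m$ feeds back into \eqref{hyp:theta}, because the $4m\theta/3$-exponential moments must be controlled by $w_{-\gamma}$. Your sketch notes the tension between the weight and $w_\theta$, but does not make the $m$-dependence of the union bound explicit, and in the form you wrote it the argument does not close. A related omission: the deterministic iteration controlling the drift term involves an integrable singularity $(t-s)^{-1/2}$ from $\nabla\cdot$ composed with the semigroup, so a Gronwall--Henry lemma (Lemma~\ref{lem:GH}), not ordinary Gr\"onwall, is needed to absorb it with a $\delta$-small prefactor.
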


We first prove the following Lemma that will allow us to control the $H^{-r}_\theta$ norms of $\gd_{Y_{i,t}}$.

\begin{lemma}
\label{lem:control_exp_Yi}
Consider $ \gamma\in (0,1)$ and $\theta$ and $r$ satisfying \eqref{hyp:theta} and \eqref{hyp:r}. Let $ \bar \delta>0$, suppose \eqref{hyp:control_pN0} and denote by 
\begin{equation}
C_{ 0, \gamma}:=\sup_{ N\geq 1}\mathbf{ E} \left[ \left\langle p_{ N, 0}\, ,\, w_{ - \gamma}\right\rangle\right]< \infty.
\end{equation} 
Then there exists a constant $\kappa_2=\kappa_2(F, K, \sigma, \gamma, \bar\gd)$ such that for all $\gd \in (0, \bar \gd)$,
\begin{equation}
\label{eq:control_wgamma}
\sup_{ t\geq0,\ N\geq1}\mathbf{ E} \left[ \frac{ 1}{ N} \sum_{ i=1}^{ N}  w_{ - \gamma}(Y_{ i, t})\right] \leq \max \left(\kappa_2, C_{ 0, \gamma}\right).
\end{equation}
In particular, we have
\begin{equation}
\label{eq:uniform_control_norm_mu1}
\sup_{ t\geq 0, \, N\geq 1} \mathbf{ E} \left[ \left\Vert p_{ N, t} \right\Vert_{ H_{ \theta}^{ -r+2}}\right] \leq C_{ 1, r}\max \left( \kappa_{ 2}, C_{ 0, \gamma}\right)<\infty,
\end{equation} where $C_{ 1, r}$ is given by Lemma~\ref{lem:control_delta_x} for the choice of $ \eta=1$.
\end{lemma}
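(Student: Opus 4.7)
The plan is to apply It\^o's formula to $t \mapsto w_{-\gamma}(Y_{i,t})$ and exploit the strongly dissipative Ornstein-Uhlenbeck structure of the $Y_{i}$-dynamics. Since $K$ and $\gs$ are diagonal (hence commute), a direct computation gives
\[
\nabla w_{-\gamma}(x) = \gamma K \gs^{-2} x\, w_{-\gamma}(x), \qquad \mathrm{tr}\bigl(\gs^{2} \nabla^{2} w_{-\gamma}(x)\bigr) = \bigl( \gamma\, \mathrm{tr}(K) + \gamma^{2} |x|^{2}_{K^{2} \gs^{-2}}\bigr) w_{-\gamma}(x).
\]
Using that $d\langle Y_{i}\rangle_{t} = 2 \gs^{2}(1-\tfrac{1}{N})\, dt$, the combined contribution of the linear drift $-KY_{i}$ and the Laplacian to the infinitesimal generator applied to $w_{-\gamma}(Y_{i})$ equals
\[
\bigl(1-\tfrac{1}{N}\bigr) \gamma\, \mathrm{tr}(K)\, w_{-\gamma}(Y_{i}) - \gamma \bigl(1 - (1-\tfrac{1}{N}) \gamma\bigr) |Y_{i}|^{2}_{K^{2} \gs^{-2}} w_{-\gamma}(Y_{i}),
\]
whose quadratic coefficient is bounded below uniformly in $N$ by $\gamma(1-\gamma) > 0$. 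This is precisely where the hypothesis $\gamma < 1$ plays a crucial role, and it is the main technical point of the estimate. The bounded nonlinear contribution $\gd(F_{m_{N,t}}(Y_{i}) - \tfrac{1}{N}\sum_{j} F_{m_{N,t}}(Y_{j}))\cdot \nabla w_{-\gamma}(Y_{i})$ is pointwise controlled by $C|Y_{i}|\, w_{-\gamma}(Y_{i})$ using $\Vert F\Vert_{\infty} < \infty$, and is then absorbed into the quadratic contraction term via Young's inequality, uniformly in $\gd \in (0, \bar\gd)$.

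Summing over $i$, taking expectations (the It\^o martingale part vanishes after a standard stopping-time localization, since $w_{-\gamma}$ and $\nabla w_{-\gamma}$ are locally bounded), and using that for every $\lambda > 0$ there exists $A_{\lambda} > 0$ with $|x|^{2}_{K^{2} \gs^{-2}} w_{-\gamma}(x) \geq \lambda\, w_{-\gamma}(x) - A_{\lambda}$ for all $x \in \bbR^{d}$ (an immediate consequence of the continuity of $w_{-\gamma}$ on the compact $\{|x|^{2}_{K^{2} \gs^{-2}} \leq \lambda\}$), we obtain, after choosing $\lambda$ large enough, a closed differential inequality
\[
\frac{d}{dt} \mathbf{E}\biggl[\frac{1}{N} \sum_{i=1}^{N} w_{-\gamma}(Y_{i,t})\biggr] \leq -c\, \mathbf{E}\biggl[\frac{1}{N} \sum_{i=1}^{N} w_{-\gamma}(Y_{i,t})\biggr] + C,
\]
with constants $c, C > 0$ depending only on $(F, K, \gs, \gamma, \bar\gd)$. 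Gr\"onwall's lemma then yields \eqref{eq:control_wgamma} with $\kappa_{2} := C/c$.

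The second bound \eqref{eq:uniform_control_norm_mu1} is deduced from \eqref{eq:control_wgamma} via Lemma~\ref{lem:control_delta_x}. Applying it with $\eta = 1$ (the hypothesis $r-2 > d/2$ being ensured by \eqref{hyp:r}) gives
\[
\Vert \delta_{Y_{i,t}}\Vert_{H^{-r+2}_{\theta}} \leq C_{1,r} \exp\bigl(\tfrac{\theta}{3} |Y_{i,t}|^{2}_{K \gs^{-2}}\bigr) \leq C_{1,r}\, w_{-\gamma}(Y_{i,t}),
\]
where the last inequality uses that $\theta < \tfrac{3\gamma}{4m} \leq \tfrac{3\gamma}{2}$ by \eqref{hyp:theta}. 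Writing $p_{N,t} = \frac{1}{N} \sum_{i=1}^{N} \delta_{Y_{i,t}}$ and using the triangle inequality in $H^{-r+2}_{\theta}$ yields $\Vert p_{N,t}\Vert_{H^{-r+2}_{\theta}} \leq \frac{C_{1,r}}{N} \sum_{i=1}^{N} w_{-\gamma}(Y_{i,t})$, and taking expectations combined with \eqref{eq:control_wgamma} finishes the proof.
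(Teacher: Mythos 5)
Your proof is correct and follows essentially the same route as the paper: apply It\^o's formula to $w_{-\gamma}(Y_{i,t})$, exploit the matched Ornstein--Uhlenbeck structure so that the drift $-KY_i$ and the Laplacian yield a dominant $-\gamma(1-\gamma(1-\tfrac1N))\left\vert x\right\vert^2_{K^2\sigma^{-2}}w_{-\gamma}$ term, bound the nonlinear contribution by $C\left\vert x\right\vert w_{-\gamma}$ using $\Vert F\Vert_\infty < \infty$, close the resulting differential inequality via Gr\"onwall, and deduce \eqref{eq:uniform_control_norm_mu1} from Lemma~\ref{lem:control_delta_x} with $\eta=1$ and the comparison $\theta/3 \leq \gamma/2$. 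The only cosmetic difference is how the linear-in-$\vert x\vert$ error is absorbed: you use Young's inequality together with the elementary fact that $\vert x\vert^2_{K^2\sigma^{-2}} w_{-\gamma}(x) \geq \lambda w_{-\gamma}(x) - A_\lambda$, whereas the paper invokes an explicit polynomial-weight estimate (Lemma~A.1 of \cite{Lucon:2018qy}); these are interchangeable ways of closing the same inequality and lead to the same $\kappa_2$-type constant.
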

Note that the uniform bound \eqref{eq:uniform_control_norm_mu1} only depends on $ \bar \delta$, not on the particular choice of $ \delta\in (0, \bar \delta)$. In particular, taking some smaller $ \delta$ if necessary in the following will not change the uniform bound \eqref{eq:uniform_control_norm_mu1}.
\begin{proof}[Proof of Lemma~\ref{lem:control_exp_Yi}]
Let $ \gamma\in(0, 1)$.  By Ito's formula, one obtains, using \eqref{eq:eds_Yi}
\begin{align*}
w_{ - \gamma}(Y_{ i, t})=& w_{ - \gamma}(Y_{ i, 0}) \\
&+ \gamma \int_{ 0}^{t}  w_{ - \gamma}(Y_{ i, s}) \left\lbrace -\left(1-\gamma\left(1- \frac{ 1}{ N}\right)\right)\left\vert K^{ \frac{ 1}{ 2}}Y_{ i, s} \right\vert^{ 2}_{ K \sigma^{ -2}} + \left(1- \frac{ 1}{ N}\right)  {\rm Tr}(K)\right\rbrace {\rm d}s\\
&+ \gamma  \gd\int_{ 0}^{t}  w_{ - \gamma}(Y_{ i, s}) \left\langle K \sigma^{ -2}Y_{ i, s}\, ,\, F_{ m_{ N, s}}(Y_{ i, s}) - \frac{ 1}{ N}\sum_{j=1}^{ N}F_{ m_{ N, s}}(Y_{ j, s})\right\rangle {\rm d}s\\
&+ \gamma \sqrt{ 2} \sum_{ k=1}^{ d} \int_{ 0}^{t} \left( K \sigma^{ -2}Y_{ i, s}\right)_{ k} w_{ - \gamma}(Y_{ i, s}) \left( \left( \sigma \cdot {\rm d} B_{ i, s}\right)_{ k}- \frac{ 1}{ N} \left( \sigma \cdot \sum_{ j=1}^{ N} {\rm d}B_{ j, s}\right)_{ k}\right),
\end{align*}
so that
\begin{align*}
\frac{ {\rm d}}{ {\rm d}t}\mathbf{ E} \left[w_{ - \gamma}(Y_{ i, t})\right]&= \gamma \mathbf{ E} \bigg[w_{ - \gamma}(Y_{ i, t}) \bigg\lbrace -\left(1-\gamma\left(1- \frac{ 1}{ N}\right)\right)\left\vert K^{ \frac{ 1}{ 2}}Y_{ i, t} \right\vert^{ 2}_{ K \sigma^{ -2}}\\
&\qquad \qquad \qquad \qquad\qquad \qquad\qquad \qquad\qquad + \left(1- \frac{ 1}{ N}\right)  {\rm Tr}(K)\bigg\rbrace \bigg]\\
&+ \gamma \gd\mathbf{ E} \left[w_{ - \gamma}(Y_{ i, t}) \left\langle K \sigma^{ -2}Y_{ i, t}\, ,\, F_{ m_{ N, t}}(Y_{ i, t}) - \frac{ 1}{ N}\sum_{j=1}^{ N}F_{ m_{ N, t}}(Y_{ j, t})\right\rangle\right].
\end{align*}
Since $F$ is bounded, one obtains that
\begin{equation*}
 \frac{ {\rm d}}{ {\rm d}t}\mathbf{ E} \left[w_{ - \gamma}(Y_{ i, t})\right]\leq  \gamma  \mathbf{ E} \left[w_{ - \gamma}(Y_{ i, t}) p \left(\left\vert Y_{ i, t} \right\vert_{ K \sigma^{ -2}}\right)\right] 
\end{equation*}
with 
\begin{equation}
p(u):= -\left(1-\gamma\left(1- \frac{ 1}{ N}\right)\right)k_{ min} u^{ 2} + \frac{ 2 \gd \left\Vert F \right\Vert_{ \infty}k_{ max}^{ \frac{ 1}{ 2}}}{ \sigma_{ min}} u + \left(1- \frac{ 1}{ N}\right)  {\rm Tr}(K).
\end{equation}
Using \cite{Lucon:2018qy} Lemma~A.1, we have that 
\begin{equation}
e^{ \frac{ \gamma}{ 2} u^{ 2}}p(u)\leq C-e^{ \frac{ \gamma}{ 2} u^{ 2}}
\end{equation}
for the constant
\begin{multline}
C:= \left( \frac{\gd \left\Vert F \right\Vert_{ \infty}^{ 2}k_{ max}}{\left(1-\gamma\left(1- \frac{ 1}{ N}\right)\right)k_{ min}\sigma_{ min}^{ 2}} +  \left(1- \frac{ 1}{ N}\right)  {\rm Tr}(K)+1\right) \\
\exp \left( \frac{ \gamma}{ 2\left(1-\gamma\left(1- \frac{ 1}{ N}\right)\right)k_{ min}} \left( \frac{4\gd  \left\Vert F \right\Vert_{ \infty}^{ 2}k_{ max}}{ \left(1-\gamma\left(1- \frac{ 1}{ N}\right)\right)k_{ min}\sigma_{ min}^{ 2}} + 2\left(1- \frac{ 1}{ N}\right)  {\rm Tr}(K)\right)\right)
\end{multline}
that is anyway smaller than (note that $ \gamma\in (0, 1)$ ensures that $ 1 -\gamma \left(1-\frac{1}{ N}\right) >1 - \gamma$ uniformly in $N$)
\begin{multline}
\kappa_2:= \left( \frac{\bar \gd\left\Vert F \right\Vert_{ \infty}^{ 2}k_{ max}}{ (1- \gamma) k_{ min}\sigma_{ min}^{ 2}} + {\rm Tr}(K)+1\right) \\
\times\exp \left( \frac{ 1}{ 2 (1- \gamma) k_{ min}} \left( \frac{4 \bar \gd\left\Vert F \right\Vert_{ \infty}^{ 2}k_{ max}}{ (1- \gamma) k_{ min}\sigma_{ min}^{ 2}} + 2 {\rm Tr}(K)\right)\right).
\end{multline}
Hence, summing over $i=1, \ldots, N$
\begin{equation*}
\frac{ {\rm d}}{ {\rm d}t}\mathbf{ E} \left[ \frac{ 1}{ N} \sum_{ i=1}^{ N} w_{ - \gamma}(Y_{ i, t})\right]\leq  \gamma \left(\kappa_2- \mathbf{ E} \left[ \frac{ 1}{ N} \sum_{ i=1}^{ N} w_{ - \gamma}(Y_{ i, t})\right] \right)
\end{equation*}
which gives \eqref{eq:control_wgamma}. Since by \eqref{hyp:r}, we have $r -2> \frac{ d}{ 2}$ and by \eqref{hyp:theta}, we have in particular $ \frac{ \theta}{ 3} \leq \frac{ \gamma}{ 2}$, taking $ \eta=1$ in Lemma~\ref{lem:control_delta_x}, we obtain directly \eqref{eq:uniform_control_norm_mu1}.
\end{proof}



To prove Proposition~\ref{prop: bound mu1t} we will rely on stability properties given by the operator $\cL_N$ defined as
\begin{equation}
\cL_N p = \left(1-\frac1N\right)\nabla\cdot (\gs^2\nabla p)+ \nabla\cdot  \left(pKx \right),
\end{equation}
with dual
\begin{equation}
\cL^*_N f = \left(1-\frac1N\right)\nabla\cdot (\gs^2\nabla f)- Kx \cdot\nabla f.
\end{equation}
Proposition A.3 in \cite{LP2021a} gives the following estimates on this operator (the uniformity of the constants with respect to $N$ can be made apparent in the proof given in \cite{LP2021a}).

\begin{lemma}\label{lem:bound OU}
There exists constants $C_\cL>0$ and $\lambda_\cL>0$ independent from $N$ such that for all $0<s<t$, $0\leq \beta\leq 2$ and all $f\in H^r_\theta$,
\begin{align}
\left\Vert e^{t\cL^*_N}f\right\Vert_{H^{r+\beta}_\theta} &\leq C_\cL \left(1+ t^{-\frac{\beta}{2}}\right) \left\Vert f\right\Vert_{H^r_\theta},\label{eq:cLN bound 1}\\
\left\Vert \left(e^{t\cL^*_N}-1\right)f\right\Vert_{H^{r-\beta}_\theta} &\leq  C_\cL \left( t^{\frac{\beta}{2}}\wedge 1\right)\left\Vert f\right\Vert_{H^r_\theta}, \label{eq:cLN bound 3}\\
\left\Vert \left(e^{t\cL^*_N}-e^{s\cL^*_N}\right)f\right\Vert_{H^{r-\beta}_\theta} &\leq  C_\cL\left((t-s)^{\frac{\beta}{2}}\wedge 1\right)\left\Vert f\right\Vert_{H^r_\theta}, \label{eq:cLN bound 4}\\
\left\Vert \nabla e^{t\cL^*_N}f\right\Vert_{H^{r}_\theta}& \leq C_\cL e^{- \lambda_\cL t}\left\Vert f\right\Vert_{H^{r+1}_\theta}. \label{eq:cLN bound 5}
\end{align} 
\end{lemma}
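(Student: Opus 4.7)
The four estimates are parabolic regularity and contraction properties of the Ornstein--Uhlenbeck operator $\cL^*_N$ with diffusion coefficient $(1-\tfrac{1}{N})\gs^2$ and linear drift $-Kx$. Because $K$ and $\gs$ are diagonal with positive entries, the associated SDE splits into $d$ independent one-dimensional OU processes, and the semigroup admits the explicit Mehler representation
\begin{equation}
e^{t\cL^*_N}f(x)=\mathbf{E}\bigl[f(e^{-tK}x+\eta^{N}_t)\bigr],\label{eq:plan_mehler}
\end{equation}
where $\eta^{N}_t$ is a centred Gaussian whose diagonal covariance has $i$-th entry $(\gb^{N,i}_t)^2=(1-\tfrac{1}{N})\gs_i^2(1-e^{-2k_it})/k_i$. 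The plan is to derive all four estimates from \eqref{eq:plan_mehler} together with two algebraic identities obtained by differentiating under the expectation: the commutation relation
\begin{equation}
\partial_i e^{t\cL^*_N}f = e^{-tk_i}\,e^{t\cL^*_N}(\partial_i f),\label{eq:plan_commut}
\end{equation}
and the Gaussian (Malliavin) integration by parts
\begin{equation}
\partial_i e^{t\cL^*_N}f(x)=\frac{e^{-tk_i}}{\gb^{N,i}_t}\,\mathbf{E}\bigl[f(e^{-tK}x+\eta^{N}_t)\,Z_i\bigr],\label{eq:plan_mal}
\end{equation}
with $Z\sim\cN(0,I)$.

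The preliminary step is uniform $L^2_\theta$-boundedness, $\Vert e^{t\cL^*_N}f\Vert_{L^2_\theta}\leq C\Vert f\Vert_{L^2_\theta}$. Cauchy--Schwarz applied to \eqref{eq:plan_mehler} gives $|e^{t\cL^*_N}f|^2\leq e^{t\cL^*_N}(f^2)$, and Fubini reduces the claim to the kernel estimate $\int p^N_t(x,y)w_\theta(x)\dd x\leq C\,w_\theta(y)$. A direct completion of squares in the Gaussian density shows that this integral equals $\prod_{i=1}^d (\theta+(1-\theta)e^{-2tk_i})^{-1/2}\exp\bigl(-\tfrac{1}{2}\sum_i \tfrac{\theta k_i y_i^2/\gs_i^2}{\theta+(1-\theta)e^{-2tk_i}}\bigr)$; since $\theta\in(0,1)$ and $e^{-2tk_i}\in(0,1]$, each factor $\theta+(1-\theta)e^{-2tk_i}$ lies in $[\theta,1]$, yielding $C=\theta^{-d/2}$ uniformly in $N\geq 1$ and $t\geq 0$. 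Iterating \eqref{eq:plan_commut} for every multi-index $\alpha$ with $|\alpha|\leq r+1$ then gives $\Vert\partial^\alpha e^{t\cL^*_N}f\Vert_{L^2_\theta}\leq C\,e^{-t\sum_i k_i\alpha_i}\Vert\partial^\alpha f\Vert_{L^2_\theta}$, which upon summing via \eqref{eq:equiv_normHr} immediately produces the exponential gradient decay \eqref{eq:cLN bound 5} with $\lambda_\cL=k_{\min}$.

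For the smoothing estimate \eqref{eq:cLN bound 1}, \eqref{eq:plan_mal} combined with Cauchy--Schwarz gives $|\partial_i e^{t\cL^*_N}f(x)|^2\leq (e^{-tk_i}/\gb^{N,i}_t)^2\,e^{t\cL^*_N}(f^2)(x)$. For $N\geq 2$ the factor $e^{-tk_i}/\gb^{N,i}_t$ is bounded by $C/\sqrt{t\wedge 1}$ uniformly in $N$, since $(1-1/N)\geq 1/2$. Integrating against $w_\theta$ and using the kernel estimate above yields the $\beta=1$ case; iterating \eqref{eq:plan_mal} once more (producing a $Z_iZ_j-\delta_{ij}$ correction by the product rule) gives the $\beta=2$ case with smoothing $t^{-1}$, and real interpolation between $\beta\in\{0,1,2\}$ covers all intermediate orders. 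Using \eqref{eq:plan_commut} to absorb the ``base'' $r$ derivatives onto $f$ then produces \eqref{eq:cLN bound 1}. Estimate \eqref{eq:cLN bound 3} follows from the integrated identity $(e^{t\cL^*_N}-I)f=\int_0^t\cL^*_N e^{s\cL^*_N}f\,\dd s$, bounding $\Vert\cL^*_N e^{s\cL^*_N}f\Vert_{H^{r-\beta}_\theta}\leq C\Vert e^{s\cL^*_N}f\Vert_{H^{r-\beta+2}_\theta}\leq C(1+s^{-(2-\beta)/2})\Vert f\Vert_{H^r_\theta}$ by \eqref{eq:cLN bound 1}, and integrating in $s$. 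Finally, \eqref{eq:cLN bound 4} follows from \eqref{eq:cLN bound 3} via the semigroup identity $e^{t\cL^*_N}-e^{s\cL^*_N}=(e^{(t-s)\cL^*_N}-I)e^{s\cL^*_N}$ and $H^{r-\beta}_\theta$-boundedness of $e^{s\cL^*_N}$.

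The main technical obstacle is keeping all constants uniform in $N$: this is achieved by noting that $N$ enters only through the factor $(1-1/N)\in[1/2,1)$ for $N\geq 2$, so the boundedness, smoothing and commutation bounds are controlled by the ``worst case'' $N=2$. A secondary subtlety is the unbounded drift $-Kx\cdot\nabla$ in the auxiliary estimate $\Vert\cL^*_N g\Vert_{H^{r-2}_\theta}\leq C\Vert g\Vert_{H^r_\theta}$ needed for \eqref{eq:cLN bound 3}: here the weight $w_\theta$ is essential, because integration by parts in the weighted scalar product \eqref{eq:norm_Lw} produces the term $-Kx\cdot\nabla w_\theta/w_\theta=\theta|x|^2_{K^2\gs^{-2}}$, whose quadratic growth is absorbed by the Gaussian decay of $w_\theta$ itself, so the linear factor $Kx$ causes no loss in the weighted Sobolev norms.
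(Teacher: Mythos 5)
Your proposal is correct in substance, but note that the paper does not actually prove this lemma: it is quoted verbatim from Proposition A.3 of the companion paper \cite{LP2021a}, whose argument (judging from the ingredients reused in Appendix A here, namely the Hermite eigenbasis \eqref{eq:decomp Ltheta} and the norm equivalences of \cite{LP2021a}, Lemma A.2) is spectral in nature. You instead give a self-contained probabilistic proof via the Mehler representation, the commutation relation $\partial_i e^{t\cL_N^*}=e^{-tk_i}e^{t\cL_N^*}\partial_i$ and Gaussian integration by parts. This is a legitimate alternative route, and it correctly isolates the two genuinely non-trivial points: (i) the weight mismatch — the norm uses $w_\theta$ with $\theta\leq 1$ while the invariant density of $\cL_N^*$ is proportional to $w_{N/(N-1)}$ — which your kernel computation resolves precisely because $\theta(1-\tfrac1N)\leq 1$; and (ii) the $N$-uniformity, which reduces to $(1-\tfrac1N)\in[\tfrac12,1)$. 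Two small corrections. First, your explicit kernel factor should read $\theta(1-\tfrac1N)+\bigl(1-\theta(1-\tfrac1N)\bigr)e^{-2tk_i}$, which lies in $[\theta/2,1]$ for $N\geq 2$; the claimed uniformity over $N\geq 1$ is false at $N=1$ (the semigroup degenerates to the transport $f\mapsto f(e^{-tK}\cdot)$ and the $L^2_\theta$ bound fails as $t\to\infty$), but $N\geq 2$ is all that is ever used. Second, for \eqref{eq:cLN bound 3} your route of integrating $s^{-(2-\beta)/2}$ produces a constant of order $1/\beta$; to get $C_\cL$ uniform over $\beta\in[0,2]$ as stated, prove the endpoints $\beta=0$ (boundedness) and $\beta=2$ (via $\int_0^t\cL_N^*e^{s\cL_N^*}f\,\dd s$ and the boundedness of $\cL_N^*:H^{r}_\theta\to H^{r-2}_\theta$, which indeed holds because multiplication by $x_i$ maps $H^{s}_\theta$ into $H^{s-1}_\theta$ in the Hermite scale) and interpolate in the target space. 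Neither point affects the validity of the argument.
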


We prove the uniform a priori bound result of Proposition~\ref{prop: bound mu1t} through a discretization of the process $ p_{ N}$: set $T>0$ satisfying
\begin{equation}\label{hyp:T 1}
C_\cL e^{-\lambda_\cL T}\leq \frac14,
\end{equation}
define
\begin{equation}\label{def:nf}
n_{ f}= N \left(\left\lfloor \frac{t_f}{T}\right\rfloor+1\right),
\end{equation}
and for all $n\in \left\lbrace0, \ldots, n_{ f}\right\rbrace$
\begin{equation}
p_{ N, t}^{(n)}:= p_{N, nT+t},\ t\in [0, T).
\end{equation}

\begin{lemma}
Let $(r, \gamma, \theta)$ be fixed as in \eqref{hyp:r} and \eqref{hyp:theta} and $\bar \delta>0$. For all $ \delta\in (0, \bar \delta)$, for all $n\in\{0,\ldots,n_f-1\}$, the process $\left(p_{ N, t}^{ (n)} \right)_{ t\in [0, T]}$ satisfies the following equation in $ \mathcal{ C} \left([0, T], H_{ \theta}^{ -r+2}\right)$, written in a mild form: for all $t\in [0,T]$,
\begin{align}
p_{N, t}^{ (n)}-g_N =\, & e^{t \cL_N}\left(p_{N,0}^{ (n)}-g_N\right)\nonumber \\
&\, - \delta\int_0^t e^{(t-s)\cL_N}\nabla\cdot \left\lbrace\left(F_{m_{N,nT+s}}-\langle p_{N,s}^{ (n)},F_{m_{N,nT+s}}\rangle\right)p_{ N, s}^{ (n)}\right\rbrace {\rm d}s+V_{N,t}^{ (n)}\label{eq:decomp mu1-g},
\end{align}
where $V_{ N, t}^{ (n)}$ is the limit in $H_{ \theta}^{ -r+2}$ as $t^{ \prime} \nearrow t$ of $V_{ N, t^{ \prime}, t}^{ (n)}$ given by, for all $f\in H^{r-2}_\theta$,
\begin{equation}
\label{eq:Vnttprime}
V_{N,t',t}^{ (n)} =\frac{\sqrt{2}}{N} \sum_{i=1}^N\int_0^{t'} \nabla e^{(t-s)\cL_N^*} f(Y_{i,nT+s})\cdot \gs \left( \dd B_{i,nT+s } -\frac{1}{N}\sum_{j=1}^N \dd B_{j,nT+s}\right),
\end{equation}
that we denote by
\begin{equation}
V_{N,t}^{ (n)}(f) = \sum_{i=1}^N\int_0^t \nabla e^{(t-s)\cL_N^*} f(Y_{i,nT+s})\cdot \gs \left( \dd B_{i,nT+s } -\frac{1}{N}\sum_{j=1}^N \dd B_{j,nT+s}\right). 
\end{equation}
\end{lemma}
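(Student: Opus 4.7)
The plan is to obtain the evolution equation for $\langle p_{N,nT+t},f\rangle$ by applying It\^o's formula to $f(Y_{i,nT+t})$ via the SDE \eqref{eq:eds_Yi}, then to recast it in mild form through Duhamel's principle. The key observation for the choice of operator is that the noise driving each $Y_i$ is $\sqrt{2}\gs(\dd B_i-\tfrac1N\sum_j\dd B_j)$, whose covariation equals $2(1-\tfrac1N)\gs^2\dd t$; this exactly matches the $(1-\tfrac1N)$ factor in the definition of $\cL_N^*$. Summing the It\^o expansion over $i$ and dividing by $N$, the drift splits into the linear Ornstein--Uhlenbeck part $\langle p_{N,nT+t},\cL_N^*f\rangle\dd t$ and a nonlinear $\gd$-contribution, which after an integration by parts produces the announced divergence-form drift $-\gd\langle \nabla\cdot[(F_{m_{N,nT+t}}-\langle p^{(n)}_{N,t},F_{m_{N,nT+t}}\rangle)p^{(n)}_{N,t}],f\rangle\dd t$; the It\^o remainder gives the measure-valued martingale with increments in $\dd B_i-\tfrac1N\sum_j\dd B_j$.

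Next, I would verify directly that $\cL_N g_N=0$ from $g_N\propto\exp(-\tfrac{N}{2(N-1)}|x|^2_{K\gs^{-2}})$: the specific exponent $N/(N-1)$ ensures that the quadratic-in-$x$ contributions of $(1-\tfrac1N)\nabla\cdot(\gs^2\nabla g_N)$ and $\nabla\cdot(g_N Kx)$ cancel, and so do the constant contributions $\Tr(K)$. Consequently $e^{t\cL_N}g_N=g_N$, which makes the subtraction of $g_N$ on both sides of Duhamel's identity consistent. The mild formulation itself comes from applying It\^o to the time-dependent pairing $s\mapsto \langle p^{(n)}_{N,s},e^{(t-s)\cL_N^*}f\rangle$: the two $\cL_N^*$-contributions, one from the It\^o drift, the opposite-sign one from the $s$-derivative of the test function, cancel exactly, leaving only the divergence drift (now smoothed in time by the semigroup) and a martingale whose integrand is $\nabla e^{(t-s)\cL_N^*}f$ evaluated at $Y_{i,nT+s}$. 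Integrating from $0$ to $t'<t$, using the duality $\langle p,\cL_N^*f\rangle=\langle \cL_N p,f\rangle$ and the invariance $e^{t\cL_N}g_N=g_N$, yields the stated identity at time $t'$ with stochastic term $V^{(n)}_{N,t',t}$.

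The main technical obstacle is the passage to the limit $t'\uparrow t$ in $H^{-r+2}_\theta$. A direct It\^o-isometry estimate on $\mathbf{E}\Vert V^{(n)}_{N,t',t}\Vert^2_{H^{-r+2}_\theta}$ would require controlling $\Vert \nabla e^{(t-s)\cL_N^*}f\Vert_{H^r_\theta}$ (to point-evaluate via Lemma~\ref{lem:control_delta_x}); iterating the smoothing bound \eqref{eq:cLN bound 1} and the gradient bound \eqref{eq:cLN bound 5} only delivers a singularity of order $(t-s)^{-3/2}$, whose square is not integrable at $s=t$ when $f$ is merely in $H^{r-2}_\theta$. I would circumvent this by defining $V^{(n)}_{N,t}$ indirectly: rewriting the identity at time $t'$ as
\begin{equation*}
V^{(n)}_{N,t',t}=e^{(t-t')\cL_N}(p^{(n)}_{N,t'}-g_N)-e^{t\cL_N}(p^{(n)}_{N,0}-g_N)+\gd\int_0^{t'}e^{(t-s)\cL_N}\nabla\cdot\{(F_{m_{N,nT+s}}-\langle p^{(n)}_{N,s},F_{m_{N,nT+s}}\rangle)p^{(n)}_{N,s}\}\dd s,
\end{equation*}
each right-hand side term converges in $H^{-r+2}_\theta$ as $t'\uparrow t$: the first by strong continuity of $e^{\tau\cL_N}$ on $H^{-r+2}_\theta$ (given by \eqref{eq:cLN bound 3}) combined with pathwise continuity of $s\mapsto p^{(n)}_{N,s}-g_N$ in that space, which follows from the uniform moment control of Lemma~\ref{lem:control_exp_Yi}, the boundedness of $F$ and the mild identity already established on $[0,t']$; the integral term converges by dominated convergence, using boundedness of $F$ together with \eqref{eq:cLN bound 1} applied with $\beta=1$ to absorb the divergence $\nabla\cdot$ into an integrable singularity. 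This identifies the limit $V^{(n)}_{N,t}\in H^{-r+2}_\theta$ and yields the announced mild identity.
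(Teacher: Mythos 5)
Your overall plan is the same as the paper's: apply It\^o's formula to the test functions $s\mapsto e^{(t-s)\cL_N^*}f$, use the cancellation $\partial_s\varphi_s+\cL_N^*\varphi_s=0$, pass the semigroup to the other side by duality, and use $e^{t\cL_N}g_N=g_N$ (your computation of $\cL_Ng_N=0$ via the exponent $N/(N-1)$ is correct, and indeed it is $\cL_N$, not $\cL_N^*$, that annihilates $g_N$). The integration-by-parts step $\langle p_{N,s},e^{(t-s)\cL_N^*}f\rangle=\langle e^{(t-s)\cL_N}p_{N,s},f\rangle$ for a measure $p_{N,s}$ does need a short approximation argument, which the paper supplies and you only allude to, but this is minor.

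The substantive issue is your treatment of the noise term. You assert that a direct moment estimate on $V^{(n)}_{N,t',t}$ would require controlling $\Vert \nabla e^{(t-s)\cL_N^*}f\Vert_{H^r_\theta}$ in order to invoke Lemma~\ref{lem:control_delta_x}, and that iterating the smoothing bounds then produces a non-integrable singularity $(t-s)^{-3/2}$. This premise is wrong: Lemma~\ref{lem:control_delta_x} only requires the test function to sit in $H^\rho_\theta$ for some $\rho>d/2$, and under hypothesis~\eqref{hyp:r} one has $r-3>d/2$ (in fact $r-5>d/2$). Hence one evaluates $\nabla e^{(t-s)\cL_N^*}f$ at $Y_{i,nT+s}$ through the chain
\[
\bigl|\nabla e^{(t-s)\cL_N^*}f(Y_{i,nT+s})\bigr|\leq \Vert\delta_{Y_{i,nT+s}}\Vert_{H^{-(r-3)}_\theta}\,\Vert\nabla e^{(t-s)\cL_N^*}f\Vert_{H^{r-3}_\theta}\leq C\,w_{-2\theta/3}(Y_{i,nT+s})\,\Vert f\Vert_{H^{r-2}_\theta},
\]
where the last step uses only $\Vert\nabla u\Vert_{H^{r-3}_\theta}\leq C\Vert u\Vert_{H^{r-2}_\theta}$ and the $\beta=0$ case of~\eqref{eq:cLN bound 1}. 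There is no singularity at all. This is exactly what the paper does in the proof of Lemma~\ref{lem:bound Vnt-Vns}; combined with the Garsia--Rademich--Rumsey / Kolmogorov argument it gives at once the existence of the limit $t'\nearrow t$ and the continuity of $t\mapsto V^{(n)}_{N,t}$ in $H^{-r+2}_\theta$.

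Your proposed workaround (identifying $V^{(n)}_{N,t}$ through the Duhamel rewrite and taking limits of the remaining terms) is not wrong in spirit, and could be made rigorous, but it solves a problem that does not exist and introduces a different burden: you need pathwise continuity of $s\mapsto p^{(n)}_{N,s}$ in $H^{-r+2}_\theta$, which you justify rather loosely by appealing to Lemma~\ref{lem:control_exp_Yi} ``and the mild identity already established on $[0,t']$''. That justification is close to circular as stated. The clean argument would be a direct one -- continuity of $t\mapsto\delta_{Y_{i,t}}$ in $H^{-(r-2)}_\theta$ from trajectory continuity of $Y_{i,t}$ and the local Lipschitz bound furnished by Lemma~\ref{lem:control_delta_x} applied to $\nabla h\in H^{r-3}_\theta$ -- and does not rely on the mild identity. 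In short: your structure matches the paper, but you should redo the noise estimate with the point-evaluation in $H^{r-3}_\theta$; the singularity you feared is absent, and the indirect construction of $V^{(n)}_{N,t}$ becomes unnecessary.
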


\begin{proof}
We prove this result for $n=0$, the proof being similar for $n\geq 1$ and we simply write $p_{ N, s}= p_{ N, s}^{ (0)}$. Consider any test function $t \mapsto \varphi_t(x) \in \mathcal{ C}_{ b}^{ 1, 2} \left([0, +\infty) \times \mathbb{ R}^{ d}, \mathbb{ R}\right)$. By Ito's formula, we have
\begin{align}
\varphi_t(Y_{i,t})=\, &\varphi_0(Y_{i,0})+\int_0^t \partial_s \varphi_s(Y_{i,s})\dd s
+\left(1-\frac1N\right) \int_0^t \nabla\cdot\left(\gs^2\nabla \varphi_s(Y_{i,s})\right)\dd s \nonumber\\
&+\int_0^t \nabla\varphi_s(Y_{i,s})\cdot \left(F_{m_{N,s}}(Y_{i,s})-KY_{i,s}-\frac{1}{N}\sum_{j=1}^N F_{m_{N,s}}(Y_{j,s}) \right)\dd s \\
&+\sqrt{2}\int_0^t \nabla \varphi_s(Y_{i,s})\cdot \gs\left(\dd B_{i,s}-\frac{1}{N}\sum_{j=1}^N \dd B_{j,s} \right), \nonumber
\end{align}
and we obtain after summation
\begin{align}
\label{eq:muN_weak}
\left\langle p_{N,t}, \varphi_t\right\rangle =&\, \left\langle p_{N,0},\varphi_0\right\rangle \nonumber\\
&+ \int_0^t \bigg\langle p_{N,s},\partial_s \varphi_s + \left(1-\frac1N\right)\nabla\cdot(\gs^2\nabla \varphi_s)\nonumber\\
&\qquad\qquad\qquad\qquad\qquad+\nabla \varphi_s\cdot \left(F_{m_{N,s}}-Ky -\langle p_{N,s},F_{m_{N,s}}\rangle \right) \bigg\rangle \dd s \nonumber\\
& +\frac{\sqrt{2}}{N}\sum_{i=1}^N\int_0^t \nabla \varphi_s(Y_{i,s})\cdot \gs \left( \dd B_{i,s } -\frac{1}{N}\sum_{j=1}^N \dd B_{j,s}\right). 
\end{align}
Take now some $f\in \mathcal{ C}^{ \infty}_{ b}(\mathbb{ R}^{ d}, \mathbb{ R})$ and consider $s\in [0, t] \mapsto \varphi_{ s}:= e^{ (t-s) \mathcal{ L}_{ N}^{ \ast}}f$. Then $\varphi_t=f$  and $\partial_s\varphi_s +\cL_N^* \varphi_s=0$. Hence, one obtains from \eqref{eq:muN_weak} that
\begin{align}
\label{eq:muN_weak_f}
\left\langle p_{N,t}, f\right\rangle =&\, \left\langle p_{N,0}, e^{ t \mathcal{ L}_{ N}^{ \ast}}f\right\rangle + \int_0^t \bigg\langle p_{N,s}, \nabla \left(e^{ (t-s) \mathcal{ L}_{ N}^{ \ast}}f\right)\cdot \left(F_{m_{N,s}} -\langle p_{N,s},F_{m_{N,s}}\rangle \right) \bigg\rangle \dd s \nonumber\\
& +\frac{\sqrt{2}}{N}\sum_{i=1}^N\int_0^t \nabla \left(e^{ (t-s) \mathcal{ L}_{ N}^{ \ast}}f\right)(Y_{i,s})\cdot \gs \left( \dd B_{i,s } -\frac{1}{N}\sum_{j=1}^N \dd B_{j,s}\right). 
\end{align}
The point now is to justify the integration by parts leading to \eqref{eq:decomp mu1-g}. Look first at the initial condition: consider a sequence $(p_{ l})_{ l\geq 1}$ of elements of $ \mathcal{ C}^{ \infty}_{ c} \left(\mathbb{ R}^{ d}\right)\subset L_{ - \theta}^{ 2}$ converging to $ p_{ N, 0}$ in $ H_{ \theta}^{ -r+2}$. Then for $f\in \mathcal{ C}^{ \infty}_{ b}(\mathbb{ R}^{ d}, \mathbb{ R})$, we have, for $l\geq1$
\begin{equation*}
\left\langle p_{ l}\, ,\, e^{ t \mathcal{ L}_{ N}^{ \ast}}f\right\rangle= \left\langle e^{ t \mathcal{ L}_{ N}} p_{ l}\, ,\, f\right\rangle
\end{equation*}
By continuity of $ e^{ t \mathcal{ L}_{ N}}$ on $H_{ \theta}^{ -r+2}$, $ e^{ t \mathcal{ L}_{ N}} p_{ l}$ converges in $H_{ \theta}^{ -r+2}$ to $ e^{ t \mathcal{ L}_{ N}} p_{ N, 0}$ as $N\to\infty$. In particular, $ \left\vert \left\langle e^{ t \mathcal{ L}_{ N}} p_{ N, 0} \, ,\, f\right\rangle - \left\langle e^{ t \mathcal{ L}_{ N}} p_{ l}\, ,\, f\right\rangle \right\vert \leq \left\Vert f \right\Vert_{ \theta}^{ r-2} \left\Vert e^{ t \mathcal{ L}_{ N}} p_{ l}-e^{ t \mathcal{ L}_{ N}} p_{ N, 0}  \right\Vert_{ \theta}^{ -r+2} \xrightarrow[ l\to\infty]{}0$. Hence, for all $t\geq0$,
\begin{equation*}
\left\langle p_{ N, 0}\, ,\, e^{ t \mathcal{ L}_{ N}^{ \ast}}f\right\rangle= \left\langle e^{ t \mathcal{ L}_{ N}} p_{ N, 0}\, ,\, f\right\rangle
\end{equation*}
for all regular test function $f$. We now turn to the second term in \eqref{eq:muN_weak_f}: consider $(w_{ s, l})_{ l\geq1}$ a sequence of elements in $ \mathcal{ C}^{ \infty}_{ c}(\mathbb{ R}^{ d})$ converging in $H_{ \theta}^{ -r+2}$ to $ p_{ N, s}$ (take for example $ w_{ s, l}= \phi_{ l} \ast p_{ N, s}$ for a regular approximation of the identity) and remark that
\begin{multline}
\left\langle w_{ s, l}\, ,\, \nabla \left(e^{ (t-s) \mathcal{ L}_{ N}^{ \ast}}f\right)\cdot \left(F_{m_{N,s}} -\langle p_{N,s},F_{m_{N,s}}\rangle \right)\right\rangle \\
= - \left\langle e^{ (t-s) \mathcal{ L}_{ N}}\nabla \cdot \left(w_{ s, l} \left(F_{m_{N,s}} -\langle p_{N,s},F_{m_{N,s}}\rangle \right) \right)\, ,\,   f\right\rangle.
\end{multline}
The lefthand part of the previous inequality converges, as $l\to\infty$ to
\[
\left\langle p_{ N, s}\, ,\, \nabla \left(e^{ (t-s) \mathcal{ L}_{ N}^{ \ast}}f\right)\cdot \left(F_{m_{N,s}} -\langle p_{N,s},F_{m_{N,s}}\rangle \right)\right\rangle.
\]
Moreover, concerning the righthand term, relying on Lemma~\ref{lem:bound OU},
\begin{align*}
&\left\vert  \left\langle e^{ (t-s) \mathcal{ L}_{ N}}\nabla \cdot \left((w_{ s, l}- p_{ N, s}) \left(F_{m_{N,s}} -\langle p_{N,s},F_{m_{N,s}}\rangle \right) \right)\, ,\,   f\right\rangle \right\vert \\ &\leq \left\Vert f \right\Vert_{H_{ \theta}^{ r-2}} \left\Vert e^{ (t-s) \mathcal{ L}_{ N}}\nabla \cdot \left((w_{ s, l}- p_{ N, s}) \left(F_{m_{N,s}} -\langle p_{N,s},F_{m_{N,s}}\rangle \right) \right) \right\Vert_{ H_{ \theta}^{ -r+2}}\\
&\leq \frac{ C}{ \sqrt{ t-s}} e^{ - \lambda_\cL(t-s)}\left\Vert f \right\Vert_{H_{ \theta}^{ r-2}} \left\Vert \left((w_{ s, l}- p_{ N, s}) \left(F_{m_{N,s}} -\langle p_{N,s},F_{m_{N,s}}\rangle \right) \right) \right\Vert_{ H_{ \theta}^{ -r+2}}
\end{align*}
which gives, since $F$ is bounded as well as all its derivatives
\begin{align*}
&\left\vert  \left\langle e^{ (t-s) \mathcal{ L}_{ N}}\nabla \cdot \left((w_{ s, l}- p_{ N, s}) \left(F_{m_{N,s}} -\langle p_{N,s},F_{m_{N,s}}\rangle \right) \right)\, ,\,   f\right\rangle \right\vert \\ 
&\leq \frac{ C^{ \prime}}{ \sqrt{ t-s}} e^{ - \lambda_\cL(t-s)}\left\Vert f \right\Vert_{H_{ \theta}^{ r}} \left\Vert w_{ s, l}- p_{ N, s} \right\Vert_{ H_{ \theta}^{ -r+2}}
\end{align*}
and since this is true for all regular $f$, we have
\begin{multline}
\left\Vert e^{ (t-s) \mathcal{ L}_{ N}}\nabla \cdot \left((w_{ s, l}- p_{ N, s}) \left(F_{m_{N,s}} -\langle p_{N,s},F_{m_{N,s}}\rangle \right) \right) \right\Vert_{ H_{ \theta}^{ -r+2}} \\
\leq \frac{ C^{ \prime}}{ \sqrt{ t-s}} e^{ - \lambda_\cL(t-s)} \left\Vert w_{ s, l}- p_{ N, s} \right\Vert_{ H_{ \theta}^{ -r+2}},
\end{multline}
which goes to $0$ as $l\to \infty$. This implies in particular that for all $f$ regular
\begin{multline}
\left\langle p_{ N, s}\, ,\, \nabla \left(e^{ (t-s) \mathcal{ L}_{ N}^{ \ast}}f\right)\cdot \left(F_{m_{N,s}} -\langle p_{N,s},F_{m_{N,s}}\rangle \right)\right\rangle \\
= - \left\langle e^{ (t-s) \mathcal{ L}_{ N}}\nabla \cdot \left(p_{ N, s} \left(F_{m_{N,s}} -\langle p_{N,s},F_{m_{N,s}}\rangle \right) \right)\, ,\,   f\right\rangle
\end{multline}
Similarly as before, we also have that
\begin{equation}
\left\Vert e^{ (t-s) \mathcal{ L}_{ N}}\nabla \cdot \left(p_{ N, s} \left(F_{m_{N,s}} -\langle p_{N,s},F_{m_{N,s}}\rangle \right) \right) \right\Vert_{ H_{ \theta}^{ -r+2}} \leq \frac{ C^{ \prime}}{ \sqrt{ t-s}} \left\Vert p_{ N, s} \right\Vert_{ H_{ \theta}^{ -r+2}}
\end{equation}
Now observe that \eqref{eq:uniform_control_norm_mu1} gives that for all $t\geq 0$, 
\begin{equation}
\mathbf{ E} \left[\int_{ 0}^{t} \left\Vert e^{ (t-s) \mathcal{ L}_{ N}}\nabla \cdot \left(p_{ N, s} \left(F_{m_{N,s}} -\langle p_{N,s},F_{m_{N,s}}\rangle \right) \right) \right\Vert_{ H_{ \theta}^{ -r+2}}  {\rm d}s\right]< \infty,
\end{equation}
and hence, is almost surely finite. Using \cite{MR617913}, Theorem~1, p.~133, this entails that the integral $\int_{ 0}^{t}  e^{ (t-s) \mathcal{ L}_{ N}}\nabla \cdot \left(p_{ N, s} \left(F_{m_{N,s}} -\langle p_{N,s},F_{m_{N,s}}\rangle \right) \right)  {\rm d}s$ makes sense as a Bochner integral in $H_{ \theta}^{ -r+2}$ and is a continuous function of $t$.
It remains to treat the noise term in \eqref{eq:muN_weak_f}: the precise control is made below (see Lemma~\ref{lem:bound Vnt-Vns} and its proof). We deduce in particular from the argument given below and an application of Kolmogorov Lemma that the almost-sure limit of $V_{ n, t^{ \prime}, t}$ defined in \eqref{eq:Vnttprime} below exists in $H_{ \theta}^{ -r+2}$ and defines a continuous process $ t \mapsto V_{ n, t}$ in $H_{ \theta}^{ -r+2}$.
The result follows by remarking that $e^{t\cL_N^*} g_N =g_N$.
\end{proof}

Recall the definition of $V_{ N, t^{ \prime}, t}^{ (n)}$ in \eqref{eq:Vnttprime}: we have for $0<s'<s<t$, $s'<t'<t$,
\begin{equation}
V_{N,t',t}^{ (n)}-V_{N,s',s}^{ (n)}=V^{(n, 1)}_{N,s',s,t}+V^{(n, 2)}_{N,s',s,t}+V^{(n, 3)}_{N,s',t',t}+V^{(n, 4)}_{N,s',t',t},
\end{equation}
with
\begin{align}
V^{(n, 1)}_{N,s',s,t}(f) & =\frac{\sqrt{2}}{N} \sum_{i=1}^N\int_0^{s'} \nabla\left( e^{(t-v)\cL_N^*}-e^{(s-v)\cL_N^*}\right) f(Y_{i,nT+v})\cdot \gs \dd B_{i,nT+v },\\
V^{(n, 2)}_{N,s',s,t}(f) & =-\frac{\sqrt{2}}{N} \sum_{j=1}^N\int_0^{s'} \left\{\frac{1}{N}\sum_{i=1}^N \nabla\left( e^{(t-v)\cL_N^*}-e^{(s-v)\cL_N^*}\right) f(Y_{i,nT+v})\right\}\cdot \gs \dd B_{j,nT+v },\\
V^{(n, 3)}_{N,s,t',t}(f) & =\frac{\sqrt{2}}{N} \sum_{i=1}^N\int_{s'}^{t'} \nabla e^{(t-v)\cL_N^*} f(Y_{i,nT+v})\cdot \gs \dd B_{i,nT+v },\\
V^{(n, 4)}_{N,s,t',t}(f) & =-\frac{\sqrt{2}}{N} \sum_{j=1}^N\int_{s'}^{t'} \left\{\frac{1}{N}\sum_{i=1}^N \nabla e^{(t-v)\cL_N^*} f(Y_{i,nT+v})\right\}\cdot \gs \dd B_{j,nT+v }.
\end{align}

\begin{lemma}\label{lem:VNi}
Let $(r, \gamma, \theta)$ be fixed as in \eqref{hyp gamma xi}, \eqref{hyp:r} and \eqref{hyp:theta} and $\bar \delta>0$. For all $ \delta\in (0, \bar \delta)$, for all $n\in\{0,\ldots,n_f-1\}$, the processes $\left(V^{(n, i)}_{N,t',t}\right)_{t'\in [0,t)}$, $n=0,\ldots, n_f-1$, $i=0,\ldots,4$, are  martingales in $H^{-r+2}_\theta$, with trajectories almost surely continuous, and with Meyer processes (see \cite{metivier2011semimartingales})
\begin{align}
\left\langle V^{(n, 1)}_{N,\cdot,s,t}\right\rangle_{s'}& = 2\sum_{i=1}^N \int_0^{s'}\left\Vert \frac1N \gs \cdot \left[ \nabla e^{(t-v)\cL_N^*}(\cdot)- \nabla e^{(s-v)\cL_N^*}(\cdot)\right](Y_{i,nT+v})\right\Vert^2_{H^{-r+2}_\theta} \dd v, \\
\left\langle V^{(n, 2)}_{N,\cdot,s,t}\right\rangle_{s'}& = \frac{2}{N}\sum_{i=1}^N \int_0^{s'}\left\Vert \frac1N \sum_{i=1}^N \gs \cdot \left[\nabla e^{(t-v)\cL_N^*}(\cdot)-\nabla e^{(s-v)\cL_N^*}(\cdot)\right](Y_{i,nT+v})\right\Vert^2_{H^{-r+2}_\theta} \dd v, \\
\left\langle V^{(n, 3)}_{N,\cdot,s,t}\right\rangle_{s'}& = 2\sum_{i=1}^N \int_{s'}^{t'}\left\Vert \frac1N \gs \cdot \nabla e^{(t-v)\cL_N^*}(\cdot)(Y_{i,nT+v})\right\Vert^2_{H^{-r+2}_\theta} \dd v, \\
\left\langle V^{(n, 4)}_{N,\cdot,s,t}\right\rangle_{s'}& = \frac{2}{N}\sum_{i=1}^N \int_{s'}^{t'}\left\Vert \frac1N \sum_{i=1}^N \gs \cdot \nabla e^{(t-v)\cL_N^*}(\cdot)(Y_{i,nT+v})\right\Vert^2_{H^{-r+2}_\theta} \dd v.
\end{align}
\end{lemma}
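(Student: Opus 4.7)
The four processes $V^{(n,i)}_{N,\cdot,\cdot,\cdot}$ are all Hilbert space-valued stochastic integrals of the same form against the independent Brownian motions $B_{i,\cdot}$, and the plan is to treat $V^{(n,3)}_{N,\cdot,s,t}$ as the prototype and handle the remaining three cases by the same argument, using \eqref{eq:cLN bound 4} in place of \eqref{eq:cLN bound 5} for $V^{(n,1)}$ and $V^{(n,2)}$, and reading off the extra factor $1/N$ for $V^{(n,2)}$ and $V^{(n,4)}$ from the fact that their integrands are already averaged over $i$ before the integration against $\dd B_{j,\cdot}$. I will view each integrand as an $H^{-r+2}_\theta$-valued (and $\bbR^d$-indexed) predictable process, so that each $V^{(n,i)}_{N,t',t}$ is an $H^{-r+2}_\theta$-valued stochastic integral in the sense of \cite{metivier2011semimartingales}: once the appropriate integrability condition on the integrand is verified, the continuity of trajectories, the martingale property, and the explicit form of the Meyer process all follow directly from the It\^o isometry in Hilbert spaces.

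The core step is the following pointwise bound. For a test function $f\in H^{r-2}_\theta$, rewrite $\nabla e^{(t-v)\cL_N^*}f(Y_{i,nT+v}) = \langle \delta_{Y_{i,nT+v}}, \nabla e^{(t-v)\cL_N^*}f\rangle$. The gradient estimate \eqref{eq:cLN bound 5}, applied with the parameter $r$ replaced by $r-3$ (which is licit thanks to \eqref{hyp:r}, ensuring $r-3>d/2$), combined with Lemma~\ref{lem:control_delta_x} with $\eta=1$, gives
\[
\left\Vert \sigma\cdot \nabla e^{(t-v)\cL_N^*}(\cdot)(Y_{i,nT+v})\right\Vert_{H^{-r+2}_\theta}\leq C\, e^{-\lambda_\cL(t-v)}\exp\left(\tfrac{\theta}{3}\left\vert Y_{i,nT+v}\right\vert^2_{K\sigma^{-2}}\right),
\]
where the constant depends only on $\sigma$, $K$, $\theta$, $r$, $d$. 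Squaring and taking expectation, and noting that \eqref{hyp:theta} forces $2\theta/3<\gamma/2$, the exponential-moment bound \eqref{eq:control_wgamma} from Lemma~\ref{lem:control_exp_Yi} produces a finite bound, uniform in $v$, $i$ and $N$. Integrating in $v\in[0,s']$ and summing over $i$ then yields the Hilbert-space It\^o integrability condition, so that the general theory provides at once the existence of $V^{(n,3)}_{N,\cdot,s,t}$ as a continuous $H^{-r+2}_\theta$-valued martingale with Meyer process equal to the integral of these squared norms --- that is, exactly the formula displayed in the Lemma. The three remaining cases are analogous: for $V^{(n,1)}$ and $V^{(n,2)}$ one uses \eqref{eq:cLN bound 4} to control the semigroup difference $e^{(t-v)\cL_N^*}-e^{(s-v)\cL_N^*}$, and for $V^{(n,2)}$ and $V^{(n,4)}$ the extra prefactor $1/N$ in the Meyer process appears because the $i$-sum is already present inside the integrand, so that Cauchy-Schwarz gains one power of $N$. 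Continuity of trajectories then follows from Kolmogorov's criterion applied to a Burkholder--Davis--Gundy bound on increments in $H^{-r+2}_\theta$, again controlled through the same exponential-moment estimate.

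The main obstacle is precisely the pointwise bound of the previous paragraph: because the integrand must be evaluated at the random state $Y_{i,nT+v}$, one is forced to go through the weighted Sobolev embedding of Lemma~\ref{lem:control_delta_x}, which produces an exponential factor in $\vert Y_{i,nT+v}\vert^2_{K\sigma^{-2}}$ that then has to be integrated against the law of the particle system. The matching between the exponent $\theta/3$ coming from the Sobolev bound (after squaring, $2\theta/3$) and the exponent $\gamma/2$ available from Lemma~\ref{lem:control_exp_Yi} is exactly what forces the smallness condition \eqref{hyp:theta} imposed on $\theta$, and is really the only non-routine point of the proof; the rest is a careful but standard application of Hilbert-space stochastic calculus.
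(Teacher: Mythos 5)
Your proof is correct and hinges on exactly the same key estimates as the paper's: the duality bound from Lemma~\ref{lem:control_delta_x} with $\eta=1$, the semigroup/gradient estimates from Lemma~\ref{lem:bound OU}, and the exponential-moment control from Lemma~\ref{lem:control_exp_Yi}, with the requirement $4\theta/3\leq\gamma$ coming from \eqref{hyp:theta} being the pivot. The difference lies in how the continuity and martingale structure are extracted once the integrability condition on the integrand is in place. You appeal directly to the general theory of $H^{-r+2}_\theta$-valued stochastic integrals against an $\bbR^{Nd}$-valued Brownian motion \`a la M\'etivier (and to Kolmogorov plus Burkholder--Davis--Gundy as a back-up for pathwise continuity), whereas the paper stays more hands-on: following Fernandez--M\'el\'eard, it expands over an orthonormal basis $(f_l)$ of $H^{r-2}_\theta$, shows $\sum_l \mathbf{E}\big[\sup_{s'}|V^{(1)}_{N,s'}(f_l)|^2\big]<\infty$ via Doob, and deduces pathwise continuity and the Meyer process by passing to the limit on finite truncations of the basis expansion. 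Both routes buy the same conclusions; yours is shorter at the price of invoking the abstract machinery, the paper's is self-contained at the price of a few extra lines. One small imprecision to flag: for $V^{(n,3)}$ and $V^{(n,4)}$ the stochastic integral runs over $[s',t']$ (not $[0,s']$), and the exponential-moment bound of Lemma~\ref{lem:control_exp_Yi} controls the \emph{averaged} quantity $\mathbf{E}\big[\frac1N\sum_i w_{-\gamma}(Y_{i,t})\big]$ rather than each $\mathbf{E}[w_{-\gamma}(Y_{i,t})]$ individually; the algebra still works because the $1/N^2$ prefactor in the Meyer process absorbs the sum, but the sentence ``finite bound, uniform in $v$, $i$ and $N$'' should be replaced by a statement about the $i$-averaged expectation.
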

\begin{proof}[Proof of Lemma~\ref{lem:VNi}]
We prove the statement only for $V_{ N, \cdot, s, t}^{ (n, 1)}$ for $n=0$, the other terms being treated similarly and we write $V_{ N,s^{ \prime}}^{ (1)}$ for simplicity. Let us first prove continuity of the trajectories of $V_{ N, s^{ \prime}}^{ (1)}$. We follow here closely an argument similar to \cite{Fernandez1997}, Prop.~4.4. Let $(f_{ l})_{ l\geq0}$ be a complete orthonormal basis in $ H_{ \theta}^{ r-2}$. We have
\begin{align}
\sum_{l\geq 0}&\mathbf{E}\left[\sum_{i=1}^{ N} \int_{ 0}^{s^{ \prime}}  \left\vert \frac{ 1}{ N} \sigma\cdot \left[\nabla e^{(t-v)\cL^*_N}(f_{ l})- \nabla e^{(s-v) \cL^*_N}(f_{ l}) \right](Y_{ i, v}) \right\vert^{ 2} {\rm d}v\right]\nonumber\\
&=2\sum_{l\geq 0}\sum_{i=1}^{ N} \int_{ 0}^{s^{ \prime}}  \left\vert \frac{ 1}{ N} \sigma\cdot \left[\nabla e^{(t-v)\cL^*_N}(f_{ l})- \nabla e^{(s-v)\cL^*_N}(f_{ l}) \right](Y_{ i, v}) \right\vert^{ 2} {\rm d}v\nonumber\\
&=2\sum_{i=1}^{ N} \int_{ 0}^{s^{ \prime}}  \mathbf{ E} \left[\left\Vert \frac{ 1}{ N} \sigma\cdot \left[\nabla e^{(t-v)\cL^*_N}(\cdot)- \nabla e^{(s-v)\cL^*_N}(\cdot) \right](Y_{ i, v}) \right\Vert^{ 2}_{ H_{ \theta}^{ -r+2}}\right] {\rm d}v .
\end{align}
Now, for any $f\in H_\theta^{ r-2}$, relying on Lemma~\ref{lem:bound OU} we obtain,
\begin{align}
\bigg|\sigma\cdot \Big[\nabla e^{(t-v)\cL_n^*}(f)-&\nabla e^{(s-v)\cL^*_N}(f) \Big](Y_{ i, v})\bigg|
\nonumber\\
&\leq C_1\left\Vert \gd_{Y_{i,v}}\right\Vert_{  H_{ \theta}^{ -(r-3)}}\left\Vert \nabla e^{(t-v)\cL^*_N}(f)- \nabla e^{(s-v)\cL^*_N}(f)  \right\Vert_{H_{ \theta}^{ r-3}}\nonumber\\
&\leq C_2\left\Vert \gd_{Y_{i,v}}\right\Vert_{H_{ \theta}^{ -(r-3)}}\left\Vert   e^{(t-v)\cL^*_N}(f)-  e^{(s-v)\cL^*_N}(f) \right\Vert_{  H_{ \theta}^{ r-2}}\\
&\leq C_3 w_{ \frac{ -2 \theta}{ 3}} \left(Y_{ i, v}\right) \Vert f\Vert_{ H_{ \theta}^{ r-2}},\nonumber\\
\end{align}
by Lemma~\ref{lem:control_delta_x} with $\eta=1$ (remark that hypothesis \eqref{hyp:r} implies in particular $r-3> \frac{ d}{ 2}$)
which leads to
\begin{equation}
\left\Vert \frac{ 1}{ N} \sigma\cdot \left[\nabla e^{(t-s)\cL^*_N}(\cdot)- \nabla e^{(s-v)\cL^*_N}(\cdot) \right](Y_{ i, v}) \right\Vert^{ 2}_{ H_{ \theta}^{ -r+2}}\leq \frac{C_3}{N^2}w_{ \frac{ -4 \theta}{3}} \left(Y_{ i, v}\right).
\end{equation}
Relying on Lemma~\ref{lem:control_exp_Yi} we then deduce, remarking that hypothesis \eqref{hyp:theta} implies in particular $ 4 \theta/3 \leq \gamma $,
\begin{align}
\sum_{l\geq 0}\mathbf{E}&\left[\sum_{i=1}^{ N} \int_{ 0}^{s^{ \prime}}  \left\vert \frac{ 1}{ N} \sigma\cdot \left[\nabla e^{(t-v)\cL^*_N}(f_{ l})- \nabla e^{(s-v) \cL^*_N}(f_{ l}) \right](Y_{ i, v}) \right\vert^{ 2} {\rm d}v\right]\nonumber\\
&\leq \frac{C_4}{N}\int_{ 0}^{s^{ \prime}}  \frac{ 1}{ N} \sum_{ i=1}^{ N}  \mathbf{ E} \left[w_{ \frac{ -4 \theta}{ 3} }(Y_{ i, v})\right]\dd v <\infty,
\end{align}
by Lemma~\ref{lem:control_exp_Yi}.
We deduce that, for fixed $N$ and $l\geq0$, $V^{(1)}_{N, s^{ \prime}}(f_{ l})$ is a real-valued martingale with brackets 
\begin{equation}
\left\langle V_{ N, \cdot}^{ (1)}(f_{ l})\right\rangle_{ s^{ \prime}}= 2\sum_{i=1}^{ N} \int_{ 0}^{s^{ \prime}}  \left\vert \frac{ 1}{ N} \sigma\cdot \left[\nabla e^{(t-v)\cL^*_N}(f_{ l})- \nabla e^{(s-v) \cL^*_N}(f_{ l}) \right](Y_{ i, v}) \right\vert^{ 2} {\rm d}v.
\end{equation}
Hence, by Doob's inequality
\begin{equation}
\sum_{ l=0}^{ +\infty} \mathbf{ E} \left[ \sup_{ s^{ \prime}\in [0,s)} V_{ N, s^{ \prime}}^{ (1)}(f_{ l})^{ 2}\right]< \infty ,
\end{equation}
and in particular, for fixed $N$, almost surely, for all $ \varepsilon>0$, there exists $L_{ 0}$ such that
\begin{equation}
\sum_{ l=L_{ 0}+1}^{ +\infty} \sup_{ s^{ \prime}\in[0,s)} V_{N, s^{ \prime}}^{ (1)}(f_{ l})^{ 2} < \frac{ \varepsilon}{ 6}.
\end{equation}
Let now consider some sequence $s_{ m}^{ \prime}$ such that $s_{ m}^{ \prime} \xrightarrow[ m\to\infty]{}s$. Then,
\begin{align*}
\left\Vert V_{ N, s^{ \prime}_{ m}}^{ (1)} - V_{ N, s^{ \prime}}^{ (1)}\right\Vert^{ 2}_{ H_{ \theta}^{- r+2}}&= \sum_{ l\geq0} \left[V_{ N, s^{ \prime}_{ m}}^{ (1)}(f_{ l}) - V_{ N, s^{ \prime}}^{ (1)}(f_{ l})\right]^{ 2} \\
&\leq \sum_{ l=0}^{ L_{ 0}} \left[V_{ N, s^{ \prime}_{ m}}^{ (1)}(f_{ l}) - V_{ N, s^{ \prime}}^{ (1)}(f_{ l})\right]^{ 2} + \frac{ 4 \varepsilon}{ 6}.
\end{align*}
By the continuity of the process $s^{ \prime} \mapsto V_{N, s^{ \prime}}^{ (1)}(g)$ for fixed $g$, the first term can be made smaller than $ \frac{ \varepsilon}{ 3}$ for $m$ sufficiently large. This gives the almost-sure continuity of $V_{ N, s^{ \prime}}^{ (1)}$ in $  H_{ \theta}^{- r+2}$. Its Meyer process is then simply calculated as
\begin{align*}
\left\langle V_{ N, \cdot}^{ (1)}\right\rangle_{ s^{ \prime}}&= \sum_{ l=0}^{ +\infty} 2\sum_{i=1}^{ N} \int_{ 0}^{s^{ \prime}}  \left\vert \frac{ 1}{ N} \sigma\cdot \left[\nabla e^{(t-v)\cL^*_N}(f_{ l})- \nabla e^{(s-v)\cL^*_N}(f_{ l}) \right](Y_{ i, v}) \right\vert^{ 2} dv\\
&=2\sum_{i=1}^{ N} \int_{ 0}^{s^{ \prime}}  \left\Vert \frac{ 1}{ N} \sigma\cdot \left[\nabla e^{(t-v)\cL^*_N}(\cdot)- \nabla e^{(s-v)\cL^*_N}(\cdot) \right](Y_{ i, v}) \right\Vert^{ 2}_{ H_{ \theta}^{- r+2}}  dv.
\end{align*}
The other terms can be treated in a same way.
\end{proof}

\begin{lemma}\label{lem:bound Vnt-Vns}
Let $(r, \gamma, \theta)$ be fixed as in \eqref{hyp:r} and \eqref{hyp:theta} and $m$ defined by \eqref{hyp m}. Fix $\bar \delta>0$. Then there exists a constant $\kappa_3=\kappa_3(T, \bar \delta, \gamma)>0$ such that for all $n\in\{0,\ldots, n_f-1\}$, $ \delta\in (0, \bar \delta)$ and all $0\leq s<t\leq T$,
\begin{equation}
\mathbf{E}\left[\left\Vert V_{N,t}^{ (n)}-V_{N,s}^{ (n)}\right\Vert^{2m}_{H^{-r+2}_\theta}\right]\leq \frac{\kappa_3}{N^m}  (t-s)^{ \frac{ m}{ 2}}.
\end{equation}
\end{lemma}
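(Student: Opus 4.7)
\medskip
\noindent\textit{Proof plan.} The natural approach is to decompose the increment as
\[
V_{N,t}^{(n)}-V_{N,s}^{(n)} \;=\; V^{(n,1)}_{N,s,t}+V^{(n,2)}_{N,s,t}+V^{(n,3)}_{N,s,t}+V^{(n,4)}_{N,s,t}
\]
(passing to the limit $s'\nearrow s,\ t'\nearrow t$ in the definitions preceding Lemma~\ref{lem:VNi}), and to apply the Burkholder--Davis--Gundy (BDG) inequality in the Hilbert space $H^{-r+2}_\theta$ to each of the four martingales. By sub-additivity of $x\mapsto x^{2m}$ it is enough to show $\mathbf{E}\|V^{(n,i)}_{N,s,t}\|^{2m}_{H^{-r+2}_\theta}\leq \kappa_3 N^{-m}(t-s)^{m/2}$ for each $i=1,2,3,4$, which by BDG reduces to bounding the $m$-th moment of the Meyer process given in Lemma~\ref{lem:VNi}.

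\smallskip
\noindent The core input is a pointwise estimate on the operator appearing inside each integrand. Using the semigroup identity $e^{(t-v)\cL_N^*}-e^{(s-v)\cL_N^*}=e^{(s-v)\cL_N^*}(e^{(t-s)\cL_N^*}-1)$, together with $\|\nabla u\|_{H^{r-3}_\theta}\leq C\|u\|_{H^{r-2}_\theta}$ (the dual of \eqref{eq: nabla u H'}), \eqref{eq:cLN bound 1} with $\beta=1$, and \eqref{eq:cLN bound 3} with $\beta=1$, one obtains
\[
\bigl\|\nabla\bigl(e^{(t-v)\cL_N^*}-e^{(s-v)\cL_N^*}\bigr)f\bigr\|_{H^{r-3}_\theta}\leq C\bigl(1+(s-v)^{-1/2}\bigr)\bigl((t-s)^{1/2}\wedge 1\bigr)\|f\|_{H^{r-2}_\theta},
\]
together with the complementary exponentially-decaying bound $C e^{-\lambda_\cL(s-v)}\|f\|_{H^{r-2}_\theta}$ coming directly from \eqref{eq:cLN bound 5}. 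Pairing with $\|\delta_{Y_{i,nT+v}}\|_{H^{-(r-3)}_\theta}^{2}\leq C\,w_{-4\theta/3}(Y_{i,nT+v})$ (from Lemma~\ref{lem:control_delta_x} with $\eta=1$, licit since \eqref{hyp:r} gives $r-3>d/2$), one squares and takes the minimum of both bounds. A short argument splitting the $v$-integral into $\{s-v\leq(t-s)^{1/2}\}$ (use the exponential bound) and its complement (use the regularity bound) yields the deterministic estimate $\int_0^{s}\phi(t-s,s-v)\,dv\leq C(t-s)^{1/2}$, where $\phi$ is the squared integrand modulo the $w$-factor. For $V^{(n,3)}$ and $V^{(n,4)}$ the integration range has length at most $t-s$ and only \eqref{eq:cLN bound 5} is needed, producing the even sharper factor $C(t-s)$.

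\smallskip
\noindent The passage to the $m$-th moment is then handled by Jensen's inequality with probability measure $\phi(t-s,s-v)\,dv/\Phi$ (where $\Phi\leq C(t-s)^{1/2}$), giving
\[
\Bigl(\int_0^{s'}\!\phi\sum_{i} w_{-4\theta/3}(Y_{i,nT+v})\,dv\Bigr)^{m}\leq \Phi^{m-1}\!\int_0^{s'}\!\phi\Bigl(\sum_i w_{-4\theta/3}(Y_{i,nT+v})\Bigr)^{m}\!dv,
\]
and a further convexity bound $\bigl(\sum_i w_{-4\theta/3}(Y_i)\bigr)^{m}\leq N^{m-1}\sum_i w_{-4m\theta/3}(Y_i)$. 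Taking expectation and invoking Lemma~\ref{lem:control_exp_Yi} produces a uniform bound---this is precisely where the restriction \eqref{hyp:theta} enters, since it guarantees $4m\theta/3<\gamma<1$, so $w_{-4m\theta/3}$ lies in the range of exponents controlled by Lemma~\ref{lem:control_exp_Yi}. Collecting the factors $N^{-2m}$ (from $V^{(n,1)}$) or $N^{-m}$ weighted by an additional $N^{-1}$ coming from the $\frac1N\sum_i$ inside (for $V^{(n,2)}$, $V^{(n,4)}$), one checks that each of the four Meyer processes satisfies $\mathbf{E}[\langle V^{(n,i)}\rangle_s^m]\leq C(t-s)^{m/2}/N^m$, and BDG concludes.

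\smallskip
\noindent The main obstacle is the singular factor $(s-v)^{-1/2}$ arising when one tries to transfer time-regularity of the semigroup through $\nabla$ while staying in the Sobolev scale dictated by the test function $f\in H^{r-2}_\theta$; balancing this against the exponential decay from \eqref{eq:cLN bound 5} so that the $v$-integral remains finite and yields the sharp power $(t-s)^{1/2}$ (rather than $(t-s)^{1/2-\epsilon}$) is the technical heart of the estimate. Once this is in place, the combinatorial step turning first moments into $m$-th moments is routine provided one has reserved enough margin in the choice of $\theta$, which is exactly the role of \eqref{hyp:theta} and the definition of $m$ in \eqref{hyp m}.
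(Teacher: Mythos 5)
Your proof is essentially correct, and the overall architecture matches the paper's: four-term decomposition of the increment, Burkholder--Davis--Gundy in $H^{-r+2}_\theta$, a pointwise bound obtained by pairing $\left\Vert \delta_{Y_{i,v}}\right\Vert_{H^{-(r-3)}_\theta}$ (Lemma~\ref{lem:control_delta_x} with $\eta=1$) against a Sobolev-norm estimate of the semigroup difference, then a Jensen/convexity step pushing the weight to $w_{-4m\theta/3}$ and closing with Lemma~\ref{lem:control_exp_Yi}; the hypotheses \eqref{hyp:r} and \eqref{hyp:theta} are invoked in exactly the right places.

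The one place where you deviate is in the estimate for $V^{(n,1)}$ and $V^{(n,2)}$. You factor $e^{(t-v)\cL_N^*}-e^{(s-v)\cL_N^*}=e^{(s-v)\cL_N^*}\bigl(e^{(t-s)\cL_N^*}-1\bigr)$ and combine \eqref{eq:cLN bound 1}--\eqref{eq:cLN bound 3} with $\beta=1$, producing a singular factor $(s-v)^{-1/2}$ that forces you to interpolate against the exponential bound from \eqref{eq:cLN bound 5} and to split the $v$-integral. This works (the residual $(t-s)\log(1/(t-s))$ is indeed $\leq C_T(t-s)^{1/2}$), but it is noticeably more delicate than the paper's route, which simply applies the pre-packaged estimate \eqref{eq:cLN bound 4} with $\beta=2$ after dropping from $H^{r-5}_\theta$ to $H^{r-4}_\theta$ via $\left\Vert\nabla u\right\Vert_{H^{r-5}_\theta}\leq C\left\Vert u\right\Vert_{H^{r-4}_\theta}$; one then gets the uniform-in-$v$ bound $\bigl\Vert\bigl[\nabla e^{(t-v)\cL_N^*}-\nabla e^{(s-v)\cL_N^*}\bigr](\cdot)(Y_{i,v})\bigr\Vert_{H^{-r+2}_\theta}\leq C(t-s)\,w_{-2\theta/3}(Y_{i,v})$ with no singularity at all, and the split is unnecessary. (This also yields the sharper $(t-s)^{2m}$ for these two terms; the limiting $(t-s)^{m/2}$ in the lemma ultimately comes from $V^{(n,3)}$ and $V^{(n,4)}$, where both you and the paper use essentially the same short-range argument.) Note that the paper's approach requires $r-5>d/2$ while yours requires only $r-3>d/2$; both are covered by \eqref{hyp:r}, so this is not an obstruction, but the paper's version is the one that explains why \eqref{hyp:r} is stated with a margin of $6$.
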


\begin{proof}
Our aim is to get an upper bound for $ \mathbf{ E} \left[\left\Vert V_{N,t', t}^{ (n)} - V_{N, s',  s}^{ (n)}\right\Vert_{H_{ \theta}^{-r+2}}^{ 2m}\right]$ and then make $t' \nearrow t$ and $s' \nearrow s$. Remark that it suffices to estimate $ \mathbf{ E} \left[\left\Vert V_{ N}^{ (n, i)} \right\Vert_{ H_{ \theta}^{-r+2}}^{ 2m}\right] $ for all $i=1,\ldots, 4$, relying on the inequality $ \left\Vert a+b \right\Vert^{ 2m} \leq 2m \left(\left\Vert a \right\Vert^{ 2m} + \left\Vert b \right\Vert^{ 2m}\right)$. We will treat only the case $n=0$, the calculations being similar for $n\geq 1$.

Let us first treat $V^{(0, 1)}_N$. Applying the Burkholder-Davis-Gundy inequality in Hilbert spaces (see \cite{Marinelli2016}, Th.~1.1), we obtain
\begin{multline} \label{eq:bound_norm_VN1}
\mathbf{ E} \left[ \left\Vert V_{N, s^{ \prime}, s, t}^{ (0, 1)} \right\Vert^{ 2m}_{ H_{ \theta}^{ -r+2}}\right]\leq C_1\, \mathbf{ E} \left[ \left\langle V_{N, \cdot, s, t}^{ (0, 1)}\right\rangle_{ s^{ \prime}}^{ m}\right]\\
\leq \frac{C_2}{ N^{m}}  \mathbf{ E} \left[ \left( \frac{ 1}{ N}\sum_{i=1}^{ N} \int_{ 0}^{s^{ \prime}} \left\Vert \left[\nabla e^{(t-v)\cL^*_N}(\cdot)- \nabla e^{(s-v)\cL^*_N}(\cdot)  \right](Y_{ i, v}) \right\Vert_{H^{-r+2}_{ \theta}}^{ 2} {\rm d}v\right)^{ m}\right].
\end{multline}
Let us estimate the norm above apart: applying \eqref{eq:cLN bound 4} and Lemma~\ref{lem:control_delta_x} (recall that $r-5> \frac{ d}{ 2}$ by \eqref{hyp:r}), we have, almost surely for all $f\in H^{ r-2}_{ \theta}$, 
\begin{align}
\bigg| \Big[\nabla e^{(t-v)\cL^*_N}(f)-& \nabla e^{(s-v)\cL^*_N}(f) \Big](Y_{ i,v})\bigg|\nonumber\\
&\leq C_{1,r-5}\left\Vert \gd_{Y_{i,v}}\right\Vert_{ H_{ \theta}^{ -r+5}}\left\Vert \nabla e^{(t-v)\cL^*_N}(f)- \nabla e^{(s-v)\cL^*_N}(f)  \right\Vert_{ H_{ \theta}^{ r-5}}\nonumber\\
&\leq C_3\left\Vert \gd_{Y_{i,v}}\right\Vert_{ H_{ \theta}^{ -r+5}}\left\Vert e^{(t-v)\cL^*_N}(f)- e^{(s-v)\cL^*_N}(f)  \right\Vert_{ H_{ \theta}^{ r-4}}\\
&\leq C_4(t-s) w_{ \frac{ -2 \theta}{ 3}} \left(Y_{ i, v}\right)\Vert f\Vert_{ H_{ \theta}^{ r-2}}.\nonumber
\end{align}
Hence, 
\begin{equation}
\label{eq:gradient_diff_ecL}
\left\Vert \left[\nabla e^{(t-v)\cL^*_N}(\cdot)- \nabla e^{(s-v)\cL^*_N}(\cdot) \right](Y_{ i, v}) \right\Vert_{H^{-r+2}_{ \theta} }\leq C_4 (t-s)w_{ \frac{ -2 \theta}{ 3}} \left(Y_{ i, v}\right) .
\end{equation}
Going back to \eqref{eq:bound_norm_VN1}, we obtain,
\begin{align}
\mathbf{ E} \left[ \left\Vert V_{N, s^{ \prime}, s, t}^{ (0, 1)} \right\Vert^{ 2m}_{  H_{ \theta}^{ -r+2}}\right]&\leq \frac{C_5(t-s)^{2m}}{ N^{m}}  \mathbf{ E} \left[ \left(\int_{ 0}^{s^{ \prime}}  \frac{ 1}{ N}\sum_{i=1}^{ N} w_{ \frac{ -4 \theta}{ 3}} \left(Y_{ i, v}\right) {\rm d}v\right)^{ m}\right]\\
&\leq \frac{C_5(t-s)^{2m}T^{m-1}}{ N^{m}}\mathbf{ E} \left[ \int_{ 0}^{s^{ \prime}}  \frac{ 1}{ N}\sum_{i=1}^{ N} w_{ \frac{ -4 m \theta}{ 3}} \left(Y_{ i, v}\right) {\rm d}v\right]\\
&\leq \frac{C_6}{ N^{m}}(t-s)^{2m},
\end{align}
where in the last line we have used Lemma~\ref{lem:control_exp_Yi}, since \eqref{hyp:theta} and \eqref{hyp m} imply in particular that$ \frac{ 4 m \theta}{ 3}\leq  \gamma$. Remark that here $C_6$ is of order $T^m$. The same analysis gives the same bound for $\mathbf{ E} \left[ \left\Vert V_{N, s^{ \prime}, s, t}^{ (0, 2)} \right\Vert^{ 2m}_{ H_{ \theta}^{- r+2}}\right]$. For $V^{(0, 3)}_N$, relying again of Lemma~\ref{lem:control_psit}, we obtain
\begin{align}
\left|\nabla e^{(t-v)\cL^*_N}(f)(Y_{i,v}) \right|
&\leq C_{1,r-3}\left\Vert \gd_{Y_{i,v}}\right\Vert_{  H_{ \theta}^{ -r+3}}\left\Vert \nabla e^{(t-v)\cL^*_N}(f)  \right\Vert_{ H_{ \theta}^{ r-3}}\nonumber\\
&\leq C_7 w_{ \frac{ -2 \theta}{3}} \left(Y_{ i, v}\right) \Vert f\Vert_{  H_{ \theta}^{ r-2}},
\end{align}
so that
\begin{equation}
\left\Vert \nabla e^{(t-v)\cL^*_N}(\cdot)(Y_{ i,v}) \right\Vert_{H^{-r+2}_{ \theta} }\leq C_7 w_{ \frac{ -2 \theta}{3}} \left(Y_{ i, v}\right) .
\end{equation}
Now we have (recalling that $ \frac{ m}{ 2}>1$)
\begin{align}
\mathbf{ E} \left[ \left\Vert V_{N, s^{ \prime}, s, t}^{ (0, 3)} \right\Vert^{ 2m}_{  H_{ \theta}^{ -r+2}}\right]&\leq \frac{C_8}{ N^{m}}  \mathbf{ E} \left[ \left(\int_{ s'}^{t^{ \prime}}  \frac{ 1}{ N}\sum_{i=1}^{ N} w_{ \frac{ -4 \theta}{ 3}} \left(Y_{ i, v}\right) {\rm d}v\right)^{ m}\right]\nonumber\\
&\leq \frac{C_8}{ N^{m}}\left(\int_{s'}^{t'}dv\right)^{ \frac{ m}{ 2}}\mathbf{ E} \left[\left( \int_{ s^{ \prime}}^{t^{ \prime}}  \left(\frac{ 1}{ N}\sum_{i=1}^{ N} w_{ \frac{ -4  \theta}{ 3}} \left(Y_{ i, v}\right)\right)^{2} { d}v\right)^{ \frac{ m}{ 2}}\right]\nonumber\\
&\leq \frac{C_8T^{ \frac{ m}{ 2}-1}}{ N^{m}}(t'-s')^{ \frac{ m}{ 2}}\mathbf{ E} \left[ \int_{ s'}^{t^{ \prime}}  \frac{ 1}{ N}\sum_{i=1}^{ N} w_{ \frac{ -4 m \theta}{ 3}} \left(Y_{ i, v}\right) {\rm d}v\right]\nonumber\\
&\leq \frac{C_9}{ N^{m}}(t'-s')^{ \frac{ m}{ 2}},
\end{align}
where $C_9$ is of order $T^{ \frac{ m}{ 2}}$. $V^{(0, 4)}_N$ can be treated similarly to obtain the same upper bound. All the estimates above lead to, for some positive constant $C_{10}$ depending on $T$ and $\zeta$ (and which is, for $\zeta$ fixed, of order $T^m$), 
\begin{equation}
\mathbf{ E} \left[\left\Vert V_{ N,t', t}^{ (0)} - V_{N, s', s}^{ (0)}\right\Vert_{\mathbf{ H}_{ \theta}^{-r+2}}^{ 2m}\right]\leq\frac{C_{10}}{N^m}\left( (t-s)^{ \frac{ m}{ 2}}+(t'-s')^{ \frac{ m}{ 2}}\right),
\end{equation}
which implies the result by Fatou lemma, considering $s'$ and $t'$ converging increasingly to $s$ and $t$, and defining $\kappa_3=2C_{10}$.
\end{proof}
We now rely on the following lemma, which proof can be found in \cite{SV2006}.

\begin{lemma}[Garsia-Rademich-Rumsey]\label{lem:GRR} Let $\chi$ and $\psi$ continuous and strictly increasing functions on $(0,\infty)$, such that $\chi(0)=\psi(0)=0$ and $\lim_{t\rightarrow\infty}\psi(t)=\infty$. Given $T>0$ and $\phi$ continuous on $(0,T)$ and taking its values in a Banach space $(E,\Vert \cdot\Vert)$, if
\begin{equation}
\int_0^{T}\int_0^{T} \psi\left(\frac{\Vert \phi(t)-\phi(s)\Vert}{\chi(|t-s|)}\right)\dd s \, \dd t \leq M <\infty,
\end{equation}
then for all $0\leq s\leq t\leq T$,
\begin{equation}
\Vert \phi(t)-\phi(s)\Vert \leq 8\int_0^{t-s}\psi^{-1}\left(\frac{4M}{u^2}\right)\chi(\dd u).
\end{equation}
\end{lemma}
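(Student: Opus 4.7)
The plan is to carry out the classical dyadic-approximation argument for Garsia–Rademich–Rumsey. Define the auxiliary function
\begin{equation*}
B(\tau) := \int_0^T \psi\!\left(\frac{\Vert \phi(\tau)-\phi(\sigma)\Vert}{\chi(|\tau-\sigma|)}\right) d\sigma ,
\end{equation*}
so the hypothesis reads $\int_0^T B(\tau)\,d\tau \leq M$. Fix $0 \leq s < t \leq T$. The strategy is to construct recursively a decreasing sequence $t_0, t_1, t_2, \ldots$ in $(s,t]$ converging to $s$, with $d_{k+1} := t_{k+1}-s \leq d_k/2$ (so the scales halve), such that simultaneously (i) the inner integral $B(t_{k+1})$ remains controlled in terms of $M/d_k$, and (ii) the increment $\Vert \phi(t_k)-\phi(t_{k+1})\Vert$ is controlled in terms of $\chi(t_k-t_{k+1})\cdot \psi^{-1}(C\,B(t_k)/d_k)$. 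An initial step produces $t_0 \in [s + d_0/2, t]$ with $B(t_0) \lesssim M/d_0$ and $\Vert \phi(t)-\phi(t_0)\Vert$ controlled analogously.

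To produce $t_{k+1}$ from $t_k$, I would apply Markov's inequality twice on the subinterval $J_k := (s, s+d_k/2]$, which has length $d_k/2$. The average of $B$ on $J_k$ is at most $2M/d_k$, and the average of $\psi(\Vert \phi(t_k)-\phi(\cdot)\Vert/\chi(t_k-\cdot))$ on $J_k$ is at most $2B(t_k)/d_k$. Hence the two "bad sets," where either function exceeds (say) $8$ times its average, each have measure strictly less than $d_k/8$. Their union therefore does not cover $J_k$, so a common good point $t_{k+1} \in J_k$ exists. By construction $t_{k+1}-s \leq d_k/2$, so the iteration is well-defined and $d_k \leq d_0/2^k \to 0$; continuity of $\phi$ gives $\phi(t_k) \to \phi(s)$.

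Combining properties (i) and (ii) inductively yields
\begin{equation*}
\Vert \phi(t_k)-\phi(t_{k+1})\Vert \leq \chi(t_k - t_{k+1})\, \psi^{-1}\!\left(\frac{4M}{(t_k - t_{k+1})^2}\right),
\end{equation*}
after reexpressing $d_k$ in terms of $t_k - t_{k+1}$ via the halving relation $t_k - t_{k+1} \geq d_k/2$. A telescoping sum then gives
\begin{equation*}
\Vert \phi(t)-\phi(s)\Vert \leq \sum_{k \geq 0} \chi(t_k - t_{k+1})\, \psi^{-1}\!\left(\frac{4M}{(t_k - t_{k+1})^2}\right).
\end{equation*}
The final step is to recognize the right-hand side as a Riemann–Stieltjes lower sum for the integral $\int_0^{t-s} \psi^{-1}(4M/u^2)\,\chi(du)$, up to the constant $8$. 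This uses monotonicity of $\chi$ and the fact that $u \mapsto \psi^{-1}(4M/u^2)$ is non-decreasing, together with the geometric control $d_{k+1} \leq d_k/2$ which ensures the partition induced by the $t_k$'s is comparable to a dyadic partition of $(0, t-s]$.

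The main obstacle is the simultaneous selection step: one must verify that the two Markov-type "bad sets" really do leave a nonempty good set in $J_k$, which forces a careful choice of numerical constants in the threshold and in the halving rate. Once this selection is justified, the estimates (i)–(ii) propagate cleanly, and the passage from the discrete telescoping sum to the Stieltjes integral is a straightforward monotonicity comparison; no probabilistic or functional-analytic input beyond continuity of $\phi$ and monotonicity of $\chi, \psi$ is needed.
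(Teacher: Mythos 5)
The paper does not actually prove this lemma: it cites \cite{SV2006} for the proof. So I am comparing your outline against the standard Garsia--Rodemich--Rumsey argument, which is what that reference provides. Your overall architecture is correct --- an auxiliary function $B(\tau)$, a decreasing sequence $t_k\downarrow s$ obtained by a double Markov selection, a telescoping estimate for $\|\phi(t_k)-\phi(t_{k+1})\|$, and a comparison with the Stieltjes integral --- but there is a genuine gap in the key construction.

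You halve in the $d$-scale: you choose $t_{k+1}\in(s,s+d_k/2]$, so that $d_{k+1}:=t_{k+1}-s\leq d_k/2$. The difficulty appears in the very last step. To dominate the telescoping sum $\sum_k \chi(t_k-t_{k+1})\,\psi^{-1}\!\bigl(4M/(t_k-t_{k+1})^2\bigr)$ by the Stieltjes integral $\int_0^{t-s}\psi^{-1}(4M/u^2)\,\chi(\mathrm{d}u)$, one must compare each term to $\bigl(\chi(d_k)-\chi(d_{k+1})\bigr)\,\psi^{-1}(4M/d_k^2)\leq\int_{d_{k+1}}^{d_k}\psi^{-1}(4M/u^2)\,\chi(\mathrm{d}u)$. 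This requires $\chi(t_k-t_{k+1})\lesssim\chi(d_k)-\chi(d_{k+1})$. With geometric halving of $d_k$ one has $\chi(t_k-t_{k+1})\asymp\chi(d_k)$, but $\chi(d_k)-\chi(d_{k+1})$ can be arbitrarily small compared with $\chi(d_k)$ --- the lemma makes no doubling assumption on $\chi$ (try $\chi(u)=1/\log(e/u)$: then $\chi(d_k)-\chi(d_k/2)\to0$ relative to $\chi(d_k)$, and your upper bounds do not even sum to a finite quantity). In the actual proof the halving is performed in the $\chi$-scale: one chooses the next scale $d_{k+1}$ so that $\chi(d_{k+1})=\tfrac12\chi(d_k)$ and selects $t_{k+1}$ in $(s,s+d_{k+1}]$. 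Then $\chi(d_k)-\chi(d_{k+1})=\chi(d_{k+1})=\tfrac12\chi(d_k)\geq\tfrac12\chi(t_k-t_{k+1})$, and the comparison with $\int\psi^{-1}(4M/u^2)\,\chi(\mathrm{d}u)$ goes through. This is not a matter of adjusting constants; without it the passage from the discrete sum to the Stieltjes integral has no justification.

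Two smaller remarks. First, $u\mapsto\psi^{-1}(4M/u^2)$ is \emph{non-increasing}, not non-decreasing as you wrote; the Stieltjes lower sum must take the integrand at the right endpoint of each subinterval, and the monotonicity is what makes that work. Second, your threshold of $8$ times the average is larger than needed and would inflate $4M$ to a larger constant inside $\psi^{-1}$; the classical choice is $2$ times the average over an interval of length $d_k$ (not $d_k/2$), which produces exactly the stated $4M$.
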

Define now the event
\begin{equation}
\cA_N = \left\{\max_{n\in \{0,\ldots,n_f-1\}}\sup_{t\in [0,T]} \left\Vert V_{N,t}^{ (n)}\right\Vert_{H^{-r+2}_\theta}\leq N^{-\frac12+\xi}\right\}.
\end{equation}

\begin{proposition}\label{prop:Vn small}
Let $(r, \gamma, \theta)$ be fixed as in \eqref{hyp gamma xi}, \eqref{hyp:r} and \eqref{hyp:theta} and $m$ defined by \eqref{hyp m}. Fix $\bar \delta>0$. Then, for all $ \delta\in (0, \bar \delta)$, $\mathbf{P}(\cA_N)\rightarrow 1$ as $N\rightarrow\infty$.
\end{proposition}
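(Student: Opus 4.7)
The plan is to combine the moment estimate of Lemma~\ref{lem:bound Vnt-Vns} with the Garsia-Rademich-Rumsey inequality (Lemma~\ref{lem:GRR}) to convert the pointwise in time control into a uniform in time control on each block $[0,T]$, then close by a Markov inequality and a union bound over the $n_f \sim N$ blocks.

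First, I would apply Lemma~\ref{lem:GRR} in the Banach space $E=H^{-r+2}_\theta$ with the choices $\psi(u)=u^{2m}$ and $\chi(u)=u^\alpha$, for some exponent $\alpha$ to be chosen so that $\tfrac{1}{m}<\alpha<\tfrac14$. Such a choice is permitted since by \eqref{hyp m} we have $m\geq 5$, so $\tfrac{1}{m}\leq \tfrac15<\tfrac14$. Define
\begin{equation*}
M_n := \int_0^{T}\!\!\int_0^{T}\frac{\Vert V_{N,t}^{(n)}-V_{N,s}^{(n)}\Vert_{H^{-r+2}_\theta}^{2m}}{|t-s|^{2m\alpha}}\,\dd s\,\dd t.
\end{equation*}
Using Lemma~\ref{lem:bound Vnt-Vns} and the fact that $m/2-2m\alpha>-1$ (since $\alpha<\tfrac14+\tfrac{1}{4m}$), Fubini gives a constant $C_T>0$ (independent of $N$ and $n$) such that $\mathbf{E}[M_n]\leq C_T/N^m$.

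Second, since $V_{N,0}^{(n)}=0$, applying Lemma~\ref{lem:GRR} with the above $\psi,\chi$ yields
\begin{equation*}
\sup_{t\in[0,T]}\Vert V_{N,t}^{(n)}\Vert_{H^{-r+2}_\theta}\leq 8\int_0^{T}\Big(\frac{4M_n}{u^2}\Big)^{\!\!1/(2m)}\!\!\alpha u^{\alpha-1}\,\dd u = C_{\alpha,T}\, M_n^{1/(2m)},
\end{equation*}
where the integral converges precisely because $\alpha>1/m$. Consequently, a Markov inequality gives, for some constant $C>0$ independent of $N$ and $n$,
\begin{equation*}
\mathbf{P}\!\left(\sup_{t\in[0,T]}\Vert V_{N,t}^{(n)}\Vert_{H^{-r+2}_\theta}>N^{-\frac12+\xi}\right)\leq \frac{\mathbf{E}[M_n]}{(N^{-\frac12+\xi}/C_{\alpha,T})^{2m}}\leq C\, N^{-2m\xi}.
\end{equation*}

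Finally, a union bound over $n\in\{0,\dots,n_f-1\}$ with $n_f=N(\lfloor t_f/T\rfloor+1)$ gives
\begin{equation*}
\mathbf{P}(\cA_N^c)\leq n_f\cdot C N^{-2m\xi}\leq C'\, N^{1-2m\xi}.
\end{equation*}
The main (and in fact the only delicate) check is that $1-2m\xi<0$: by \eqref{hyp m} we have $m=\lfloor\max(1/(2\xi),4)\rfloor+1>1/(2\xi)$, so $2m\xi>1$ and the bound tends to $0$ as $N\to\infty$. This is exactly the reason the exponent $m$ was chosen so large in \eqref{hyp m}: one must absorb the linear-in-$N$ loss coming from the number of blocks needed to cover a time interval of size $Nt_f$ with blocks of fixed size $T$.
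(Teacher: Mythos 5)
Your proof is correct and takes essentially the same approach as the paper: Lemma~\ref{lem:bound Vnt-Vns} plus the Garsia--Rademich--Rumsey inequality (the paper fixes the specific exponent $\chi(u)=u^{\frac14+\frac1{4m}}$, while you keep a free $\alpha\in(\tfrac1m,\tfrac14)$, which works equally well), followed by Markov and a union bound over the $n_f\sim N$ blocks. You correctly isolate the condition $2m\xi>1$ coming from \eqref{hyp m}; note that the paper's proof has a sign typo at this point (it writes ``$2\xi m<1$''), and your reading is the right one.
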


\begin{proof}
We apply Lemma~\ref{lem:GRR} with $\phi(t)=V_{N,t}^{ (n)}$, $\chi(u) = u^{ \frac{ 1}{ 4} + \frac{ 1}{ 4m}}$ and $\psi(u)=u^{2m}$. Hence, by Lemma~\ref{lem:bound Vnt-Vns}, the random variable
\begin{equation}
M =\int_0^{T}\int_0^{T}\frac{\left\Vert V_{N,t}^{ (n)}-V_{N,s}^{ (n)}\right\Vert_{H_\theta^{-r+2}}^{2m}}{|t-s|^{\frac{m}{2}+ \frac12}}\dd s \, \dd t,
\end{equation}
satisfies
\begin{equation}
\mathbf{E}[M]\leq \frac{\kappa_3}{N^m}\int_0^T\int_0^T |t-s|^{-\frac12} \dd s\, \dd t\leq\frac{C_1}{N^m},
\end{equation}
and thus,
\begin{equation}
\left\Vert V_{N,t}^{ (n)}-V_{N,s}^{ (n)}\right\Vert_{H_\theta^{-r+2}}^{2m}\leq C_2 M (t-s)^{\frac{m}{4}}.
\end{equation}
Hence
\begin{equation}
\mathbf{E}\left(\sup_{0\leq s\leq t\leq T}\frac{\left\Vert V_{N,t}^{ (n)}-V_{N,s}^{ (n)}\right\Vert_{H_\theta^{-r+2}}^{2m}}{(t-s)^{\frac{m}{4}}}\right)\leq C_2\mathbf{E}[M]\leq \frac{C_3}{N^m},
\end{equation}
which leads to
\begin{align}
\mathbf{P}\bigg(\sup_{0\leq t\leq T}\big\Vert V_{N,t}^{ (n)}&\big\Vert_{H_\theta^{-r+2}}\geq N^{-\frac12+\xi}\bigg)\nonumber\\
&\leq N^{\left(1-2\xi\right)m}\,\mathbf{E}\left[\sup_{0\leq t\leq T}\left\Vert V_{N,t}\right\Vert_{H_\theta^{-r+2}}^{2m}\right]\nonumber\\
&\leq N^{\left(1-2\xi\right)m}T^{\frac{m}{4}}\, \mathbf{E}\left[\sup_{0\leq t\leq T}\frac{\left\Vert V_{N,t}\right\Vert_{H_\theta^{-r+2}}^{2m}}{t^{\frac{m}{4}}}\right]\leq C_4 N^{-2\xi m}.
\end{align}
Finally,
\begin{equation}
\mathbf{P}\left(\sup_{0\leq n\leq n_f}\sup_{0\leq t\leq T}\left\Vert V_{N,t}\right\Vert_{H_\theta^{-r+2}}\geq N^{-\frac12+\xi}\right)\leq C_3 n_f N^{-2\xi m},
\end{equation}
which tends to $0$ as $N$ goes to infinity since $n_f$ is of order $N$ and since $  2 \xi m<1$ by \eqref{hyp m}. 
\end{proof}

For the proof of Proposition~\ref{prop: bound mu1t} we use the following Gronwall-Henry Lemma (which is given by \cite[Lemma 7.1.1]{Henry:1981} in the particular cas $\beta=\frac12$).

\begin{lemma}\label{lem:GH}
Let $a_t$ be a non negative and locally integrable function on $[0,T]$ and $b$ a positive constant. There exists a constant $c$ such that if $u_t$ is a non negative locally integrable function on $[0,T]$ satisfying for $0\leq t<T$
\begin{equation}
u_t \leq a_t + b\int_0^t \frac{u_s}{\sqrt{t-s}}\dd s,
\end{equation}
then for $0\leq t<T$ we have
\begin{align}
u_t &\leq a_t + b^{ 2} \pi\int_0^t  \left(\frac{1}{ b \pi\sqrt{ t-s}} + 2e^{b^{ 2} \pi (t-s)}\right)a_s \dd s\nonumber\\
&= a_t + \int_0^t  \left(\frac{b}{\sqrt{ t-s}} + 2b^{ 2} \pi e^{b^{ 2} \pi (t-s)}\right)a_s \dd s.
\end{align}
\end{lemma}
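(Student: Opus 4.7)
The plan is to follow the classical iteration argument of Henry for weakly-singular integral inequalities, specialised here to the exponent $1/2$. Let $B$ denote the operator $(Bu)(t)=b\int_0^t (t-s)^{-1/2}u_s\dd s$; the hypothesis then reads $u\leq a+Bu$, which by straightforward induction yields $u\leq \sum_{k=0}^{n-1}B^k a + B^n u$ for every $n\geq 1$.

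The core step is to compute the iterated kernel. Using Fubini together with the Beta identity $\int_s^t (t-r)^{\alpha-1}(r-s)^{\beta-1}\dd r = \mathrm{B}(\alpha,\beta)(t-s)^{\alpha+\beta-1}$, a routine induction shows
\begin{equation*}
(B^n a)(t) = \int_0^t k_n(t-s)\, a_s\dd s,\qquad k_n(v)=\frac{b^n\pi^{n/2}}{\Gamma(n/2)}\,v^{n/2-1}.
\end{equation*}
Local boundedness of $u$ combined with the rapid decay of $1/\Gamma(n/2)$ gives $B^n u(t)\to 0$ locally uniformly as $n\to\infty$, so $u(t)\leq a(t)+\sum_{n\geq 1}(B^n a)(t)$.

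It then remains to bound $\sum_{n\geq 1} k_n(v)$ by the explicit expression in the conclusion. The $n=1$ term is exactly $b/\sqrt{v}$, which matches the first summand. For the rest I would split the tail into even and odd indices. The even part is elementary: with $n=2j$ for $j\geq 1$ one has $k_{2j}(v)=b^{2j}\pi^j v^{j-1}/(j-1)!$, and the resulting series sums precisely to $b^2\pi\, e^{b^2\pi v}$. For the odd tail, the change of variable $y=b\sqrt{\pi v}$ turns $\sum_{n\geq 2} y^{n-2}/\Gamma(n/2)$ into (a translate of) the Mittag-Leffler function $E_{1/2}(y)=e^{y^2}(1+\operatorname{erf}(y))$, whence the immediate bound $\leq 2e^{y^2}$. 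Translating back to $v$ gives the coefficient $2b^2\pi\, e^{b^2\pi v}$ appearing in the statement.

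The main technical step, and the one I expect to require the most care, is the identification of the odd-index tail with a quantity controlled by $e^{b^2\pi v}$. A completely elementary alternative, avoiding Mittag-Leffler, is to use the Gamma duplication formula to compare $\Gamma(j+1/2)$ with $\sqrt{\Gamma(j)\Gamma(j+1)}$, which bounds each odd kernel by the geometric mean of two consecutive even ones and thereby controls the odd tail by the even one up to a universal factor; this is where the final factor of $2$ in front of $b^2\pi\, e^{b^2\pi v}$ comes from.
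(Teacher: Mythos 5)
Your main argument is correct. The paper itself does not prove this lemma but cites it as \cite[Lemma 7.1.1]{Henry:1981} specialised to $\beta=1/2$; the proof you have reconstructed is precisely Henry's: iterate the integral inequality, compute the $n$-fold Beta-convolution kernel
\begin{equation*}
k_n(v)=\frac{b^n\pi^{n/2}}{\Gamma(n/2)}\,v^{n/2-1},
\end{equation*}
sum, and identify the result with a Mittag-Leffler function. Your induction for $k_n$, the observation that $\sum_{n\geq 2}k_n(v)=b^2\pi\,E_{1/2}(b\sqrt{\pi v})$ with $E_{1/2}(y)=e^{y^2}(1+\operatorname{erf}(y))\leq 2e^{y^2}$ for $y\geq 0$, and the separate $n=1$ contribution $b/\sqrt{v}$ together give exactly the constants $\frac{b}{\sqrt{t-s}}+2b^2\pi e^{b^2\pi(t-s)}$ in the statement. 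A very small imprecision: $u$ is only assumed locally integrable, not locally bounded, but $B^nu\to 0$ still follows since, for $n\geq 2$, $\sup_{v\in(0,T']}k_n(v)\leq b^n\pi^{n/2}(T')^{n/2-1}/\Gamma(n/2)\to 0$ and $u\in L^1_{\mathrm{loc}}$.

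The ``completely elementary alternative'' you sketch at the end does not work as stated. Log-convexity of $\Gamma$ gives $\Gamma(j+\tfrac12)\leq\sqrt{\Gamma(j)\Gamma(j+1)}$, hence $1/\Gamma(j+\tfrac12)\geq 1/\sqrt{\Gamma(j)\Gamma(j+1)}$, so $k_{2j+1}\geq\sqrt{k_{2j}\,k_{2j+2}}$: the inequality points the wrong way and bounds the odd kernel from below rather than above. The Mittag-Leffler identity you give first is the route that actually closes the argument, so keep that as the proof and drop, or repair, the aside.
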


\begin{proof}[Proof of Proposition~\ref{prop: bound mu1t}]
We suppose that the event $\cA_N\bigcap \left\{\left\Vert p_{N,0}-g_N\right\Vert_{H^{-r+2}_\theta}\leq \kappa_0\right\}$ is realised, and we proceed by induction on $n$. Suppose that $\left\Vert p_{N,nT}-g_N\right\Vert_{H^{-r+2}_\theta}\leq C_0$ for some constant $C_0\geq \kappa_0$.
Recall \eqref{eq:decomp mu1-g}:
\begin{align}
p_{N, t}^{ (n)}-g_N =\, & e^{t \cL_N}\left(p_{N,0}^{ (n)}-g_N\right)\nonumber \\
&\, - \delta\int_0^t e^{(t-s)\cL_N}\nabla\cdot \left\lbrace\left(F_{m_{N,nT+s}}-\langle p_{N,s}^{ (n)},F_{m_{N,nT+s}}\rangle\right)p_{ N, s}^{ (n)}\right\rbrace {\rm d}s+V_{N,t}^{ (n)},
\end{align}
and using Lemma~\ref{lem:bound OU} we obtain
\begin{align*}
&\left\Vert \int_0^t e^{(t-s)\cL_N}\nabla\cdot \left\lbrace\left(F_{m_{N,nT+s}}-\langle p_{N, s}^{ (n)},F_{m_{N,nT+s}}\rangle\right)p_{ N, s}^{ (n)}\right\rbrace {\rm d}s \right\Vert_{ H_{ \theta}^{ -r+2}} \\ 
&\leq \int_0^t \left\Vert e^{(t-s)\cL_N}\nabla\cdot \left\lbrace\left(F_{m_{N,nT+s}}-\langle p_{N,s}^{ (n)},F_{m_{N,nT+s}}\rangle\right)p_{ N, s}^{ (n)}\right\rbrace \right\Vert_{ H_{ \theta}^{ -r +2}}{\rm d}s \\
&\leq C_{  \mathcal{ L}}\int_0^t \frac{ e^{ - \lambda_\cL(t-s)}}{ (t-s)^{ \frac{ 1}{ 2}}}\left\Vert\left(F_{m_{N,nT+s}}-\langle p_{N,s}^{ (n)},F_{m_{N,nT+s}}\rangle\right)p_{ N, s}^{ (n)}\right\Vert_{ H_{ \theta}^{ -r +2}}{\rm d}s
\end{align*}
Firstly, for $h\in H_{ \theta}^{r-2}$
\begin{align*}
\left\vert \left\langle F_{ m_{ N, nT +s}} p_{ N, s}^{ (n)}\, ,\, h\right\rangle \right\vert&= \left\vert \left\langle  p_{ N, s}^{ (n)}\, ,\, F_{ m_{ N, nT +s}} h\right\rangle \right\vert,\\
&\leq \left\Vert p_{ N, s}^{ (n)} \right\Vert_{ H_{ \theta}^{ -r+2}} \left\Vert F_{ m_{ N, nT +s}} h \right\Vert_{ H_{ \theta}^{ r-2}} \leq C_{ F} \left\Vert p_{ N, s}^{ (n)} \right\Vert_{ H_{ \theta}^{ -r+2}} \left\Vert h \right\Vert_{ H_{ \theta}^{ r-2}}
\end{align*}
since $F$ is bounded as well as all its derivatives. Secondly,
\begin{align*}
\left\vert \left\langle \langle p_{N,s}^{ (n)},F_{m_{N,nT+s}}\rangle p_{ N, s}^{ (n)}\, ,\, h\right\rangle \right\vert& =\left\vert \left\langle  p_{ N, s}^{ (n)}\, ,\, \langle p_{N,s}^{ (n)},F_{m_{N,nT+s}}\rangle h\right\rangle \right\vert,\\
&\leq \left\Vert F \right\Vert_{ \infty} \left\Vert p_{N,s}^{ (n)} \right\Vert_{ H_{  \theta}^{ -r+2}} \left\Vert h \right\Vert_{ H_{  \theta}^{ r-2}}
\end{align*}
since $p_{ N, s}^{ (n)}$ is a probability measure. Then
\begin{align*}
&\left\Vert \int_0^t e^{(t-s)\cL_N}\nabla\cdot \left\lbrace\left(F_{m_{N,nT+s}}-\langle p_{N,s}^{ (n)},F_{m_{N,nT+s}}\rangle\right)p_{ N, s}^{ (n)}\right\rbrace {\rm d}s \right\Vert_{ H_{ \theta}^{ -r +2}} \\ 
&\leq C_{  \mathcal{ L}} (C_{ F}+ \left\Vert F \right\Vert_{ \infty})\int_0^t \frac{ e^{ - \lambda_\cL(t-s)}}{ (t-s)^{ \frac{ 1}{ 2}}}\left\Vert p_{ N, s}^{ (n)}\right\Vert_{ H_{ \theta}^{ -r +2}}{\rm d}s,\\
&\leq C_{  \mathcal{ L}} (C_{ F}+ \left\Vert F \right\Vert_{ \infty})\int_0^t \frac{ e^{ - \lambda_\cL(t-s)}}{ (t-s)^{ \frac{ 1}{ 2}}}\left\Vert p_{ N, s}^{ (n)} - g_{ N}\right\Vert_{ H_{ \theta}^{ -r +2}}{\rm d}s, \\
&\quad + C_{  \mathcal{ L}} (C_{ F}+ \left\Vert F \right\Vert_{ \infty})\int_0^t \frac{ e^{ - \lambda_\cL(t-s)}}{ (t-s)^{ \frac{ 1}{ 2}}}\left\Vert g_{ N}\right\Vert_{ H_{ \theta}^{ -r+2}}{\rm d}s.
\end{align*}
Then for all $h\in H_{ \theta}^{ r}$, for $X_{ N}\sim g_{ N}$
\begin{align*}
\left\vert \left\langle g_{ N}\, ,\, h\right\rangle \right\vert &= \left\vert \mathbb{ E}\left(h(X_{ N})\right) \right\vert\\
&\leq \left(\mathbb{ E} \left(h(X_{ N})^{ 2}\right)\right)^{ \frac{ 1}{ 2}} = \left(\frac{1}{(2\pi)^\frac{d}{2}\left(1-\frac1N\right)^d \sqrt{\det(K^{-1}\gs^2)}} \int \left\vert h \right\vert^{ 2} w_{ \frac{ N}{ N-1}}\right)^{ \frac{ 1}{ 2}}\\ 
&\leq  \left(\frac{1}{(2\pi)^\frac{d}{2} \sqrt{\det(K^{-1}\gs^2)}} \right)^{ \frac{ 1}{ 2}}\left\Vert h \right\Vert_{ L_{ \theta}^{ 2}}
\end{align*}
since $ \frac{ N}{ N-1}\geq \theta$. Hence, $ \left\vert \left\langle g_{ N}\, ,\, h\right\rangle \right\vert\leq C_{ g} \left\Vert h \right\Vert_{ H_{ \theta}^{ r-2}}$ for a constant independent of $N$, so that $ \left\Vert g_{ N} \right\Vert_{ H_{ \theta}^{ -r+2}}\leq C_{ g}$. Gathering the previous estimates gives, recalling that $\Gamma(1/2)=\sqrt{\pi}$ where $\Gamma(t)=\int_0^\infty x^{t-1}e^{-x}dx$, we obtain
\begin{align}
\left\Vert p_{N,t}^{ (n)}-g_N \right\Vert_{ H_{ \theta}^{ -r+2}} \leq\, & C_0C_\cL  e^{-\lambda_\cL t} + \delta C_{  \mathcal{ L}} C_{ g}(C_{ F}+ \left\Vert F \right\Vert_{ \infty}) \sqrt{ \frac{ \pi}{ \lambda_\cL}}\nonumber \\
&\, + \delta C_{  \mathcal{ L}} (C_{ F}+ \left\Vert F \right\Vert_{ \infty})\int_0^t \frac{ e^{ - \lambda_\cL(t-s)}}{ (t-s)^{ \frac{ 1}{ 2}}}\left\Vert p_{ N, s}^{ (n)} - g_{ N}\right\Vert_{ H_{ \theta}^{ -r+2}}{\rm d}s +N^{-\frac12+\frac{\xi}{3}}. \label{eq:bound mu1-gN}
\end{align}

Take now $ \delta$ such that $ \delta C_{  \mathcal{ L}} C_{ g}(C_{ F}+ \left\Vert F \right\Vert_{ \infty}) \sqrt{ \frac{ \pi}{ \lambda_\cL}} \leq 1$, then 
\begin{align*}
\left\Vert p_{N,t}^{ (n)}-g_N \right\Vert_{ H_{ \theta}^{ -r+2}} \leq\, & C_{ 1} + \delta C_{ 2}\int_0^t \frac{ e^{ - \lambda_\cL(t-s)}}{ (t-s)^{ \frac{ 1}{ 2}}}\left\Vert p_{ N,s}^{ (n)} - g_{ N}\right\Vert_{ H_{ \theta}^{ -r +2}}{\rm d}s
\end{align*}
for $C_{ 1}:= C_{ 0} C_{  \mathcal{ L}} +2$ and $C_{ 2}= C_{  \mathcal{ L}} (C_{ F}+ \left\Vert F \right\Vert_{ \infty})$, independent of $ \delta$. Then, $u_t = e^{ \lambda_\cL t} \big\Vert p_{t}^{ (n)} - g_{ N}\big\Vert_{H^{-r+2}_\theta}$ satisfies
\begin{align*}
u_{ t} \leq\, & C_{ 1}e^{ \lambda_\cL t} + \delta C_{ 2}\int_0^t \frac{ u_{ s}}{ (t-s)^{ \frac{ 1}{ 2}}}{\rm d}s
\end{align*}
and from Lemma~\ref{lem:GH} we deduce that
\begin{equation}
u_t \leq C_{ 1}e^{ \lambda_\cL t} + \delta C_{ 1}C_{ 2}\int_0^t  \left(\frac{1}{\sqrt{ t-s}} + 2\delta C_{ 2} \pi e^{(\delta C_{ 2})^{ 2} \pi (t-s)}\right)e^{ \lambda_\cL s} \dd s.
\end{equation}
This implies directly, recalling the fact that $\big\Vert p_{N, t}^{ (n)} - g_{ N}\big\Vert_{H^{-r+2}_\theta}=e^{- \lambda_\cL t} u_t$, and supposing that $\gd\leq \max( \frac{ 1}{ C_{ 2}} \sqrt{ \frac{ \lambda_\cL}{ 2 \pi}},1)$, for $t\in [0, T]$
\begin{align}
\big\Vert p_{N, t}^{ (n)} - g_{ N}\big\Vert_{H^{-r+2}_\theta} &\leq C_{ 1} + \delta C_{ 1}C_{ 2}\int_0^t  \left(\frac{1}{\sqrt{ t-s}} + 2\delta C_{ 2} \pi e^{(\delta C_{ 2})^{ 2} \pi (t-s)}\right)e^{ -\lambda_\cL (t-s)} \dd s, \nonumber\\
&\leq C_{ 1} + C_{ 1}C_{ 2}\int_0^t  \left(\frac{e^{ -\lambda_\cL (t-s)}}{\sqrt{ t-s}} + 2 C_{ 2} \pi e^{ -\frac{\lambda_\cL}{ 2} (t-s)}\right) \dd s \nonumber\\
&\leq C_{ 1} + C_{ 1}C_{ 2} \left(\sqrt{ \frac{ \pi}{ \lambda_\cL}} + 2 C_{ 2} \pi \frac{ 2}{ \lambda_\cL}\right):= C_{ 4}.\label{aux:C4}
\end{align}
But then, coming back to \eqref{eq:bound mu1-gN}, we obtain
\begin{align}
\left\Vert p_{N,n(T+1)}-g_N \right\Vert \leq\, & C_0C_\cL  e^{-\lambda_\cL T} + \delta C_{  \mathcal{ L}} (C_{ F}+ \left\Vert F \right\Vert_{ \infty}) \sqrt{ \frac{ \pi}{ \lambda_\cL}}\left(C_{ g} +  C_{ 4}\right) +N^{-\frac12+\frac{\xi}{3}},\\
&\leq C_0\left( C_\cL e^{- \lambda_\cL T}+C_5\gd \right)+C_6 \gd +N^{-\frac12+\frac{\xi}{3}}.
\end{align}
where $C_5$ and $C_6$ are positive constants that depend only on $c, C_\cL, C_F, \pi$ and $ \lambda_\cL$ (recall that $C_{ 1}=C_{ 0} C_{  \mathcal{ L}}+2$ and the definition of $C_{ 4}$ in \eqref{aux:C4}). Defining $C_0=\max\{\kappa_0, 2C_6 +2\}$ and supposing $\gd \leq \max\left(\frac{1}{4C_5}, \frac{C_{ 0}}{ 4 C_{ 6}}\right)$, we obtain for $N$ large enough, recalling \eqref{hyp:T 1},
\begin{equation}
\big\Vert p_{N, (n+1)T}-g_N\big\Vert_{H^{-r+2}_\theta}\leq C_0.
\end{equation}
This ends the recursion and implies the result, setting $\kappa_1=C_4$. Proposition~\ref{prop: bound mu1t} is proved.
\end{proof}

\section{Long time dynamics close to periodic solution.}\label{sec:close to M}

The aim of this section is to prove the following Proposition.

\begin{proposition}\label{prop:closeness to M}
Suppose that the hypotheses of Theorem~\ref{th:main} are satisfied. Then there exists an integer  $n_0=n_0(N)$ of order $\log N$ such that for any $\gep>0$, there exists an event $\cC_{\gep,N}$ with $\mathbf{P}(\cC_{\gep,N})\rightarrow 1$ as $N$ goes to infinity and such that when $\cC_{\gep,N}$ is realized,
\begin{equation}
\sup_{0\leq n\leq n_0} \quad \sup_{t\in [0, T]}\quad  \left\Vert \mu_{N,nT+t}-\Gamma_{\Theta(\mu_{N,nT})+t}\right\Vert_{\mathbf{H}^{-r}_\theta}\leq \gep,
\end{equation}
and
\begin{equation}
\sup_{n_0+1\leq n\leq n_F} \quad \sup_{t\in [0,T]} \quad \left\Vert \mu_{N,nT+t}-\Gamma_{\Theta(\mu_{N,nT})+t}\right\Vert_{\mathbf{H}^{-r}_\theta}\leq N^{-\frac12+\xi}.
\end{equation}
\end{proposition}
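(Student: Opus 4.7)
My plan is to iterate a one-step contraction estimate over discrete time blocks of length $T=kT_\gd$, with the integer $k$ chosen large enough so that $C_{\Theta,\gd}e^{-\lambda_\gd T}\leq \tfrac14$. I work on the intersection of the high-probability events $\cA_N$ from Proposition~\ref{prop:Vn small} and $\left\{\sup_{0\leq t\leq Nt_f}\left\Vert p_{N,t}-g_N\right\Vert_{H^{-r+2}_\theta}\leq \kappa_1\right\}$ from Proposition~\ref{prop: bound mu1t}. An analogous Burkholder-Davis-Gundy argument applied to the $m$-component noise $\tfrac{\sqrt{2}}{N}\gs\sum_j B_{j,\cdot}$ shows that it is of size $O(N^{-1/2+\xi})$ uniformly over the $n_f=O(N)$ blocks with probability tending to one; combined with $\cA_N$, this controls the full measure-valued stochastic term in $\mathbf{H}^{-r}_\theta$.

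Assume by induction that $\mu_{N,nT}\in \cW^\gd$, so that $\theta_n:=\Theta(\mu_{N,nT})$ and $d_n:=\left\Vert\mu_{N,nT}-\Gamma_{\theta_n}\right\Vert_{\mathbf{H}^{-r}_\theta}$ are well-defined. Writing the mild formulation of \eqref{eq:syst PDE}, I decompose
\begin{equation*}
\mu_{N,(n+1)T} = T^T\mu_{N,nT} + \mathcal{E}_n,
\end{equation*}
where $\mathcal{E}_n$ gathers the stochastic integrals above and a nonlinear remainder coming from the interaction of the noise with the McKean-Vlasov drift. A Grönwall-Henry argument modelled on the proof of Proposition~\ref{prop: bound mu1t}, relying on Lemma~\ref{lem:bound OU} and the a priori bound on $p_{N,t}$ in $H^{-r+2}_\theta$, gives $\left\Vert\mathcal{E}_n\right\Vert_{\mathbf{H}^{-r}_\theta}\leq C_1 N^{-1/2+\xi}$ uniformly in $n\leq n_f-1$. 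Applying the isochron contraction of Theorem~\ref{th:Theta} to $T^T\mu_{N,nT}$ yields $\left\Vert T^T\mu_{N,nT}-\Gamma_{\theta_n+T}\right\Vert_{\mathbf{H}^{-r}_\theta}\leq \tfrac14 d_n$. I then replace $\theta_n+T$ by $\theta_{n+1}:=\Theta(\mu_{N,(n+1)T})$ at the cost of a bounded multiplicative factor, using the $\mathcal{C}^2$-regularity of $\Theta$ together with the Lipschitz continuity of $u\mapsto \Gamma_u$ provided by Theorem~\ref{th:Gamma}. This produces the recursion
\begin{equation*}
d_{n+1}\leq \tfrac12\, d_n+2C_1 N^{-1/2+\xi},
\end{equation*}
and simultaneously closes the bootstrap $\mu_{N,(n+1)T}\in \cW^\gd$ as soon as the right-hand side is small enough.

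Iterating gives $d_n\leq 2^{-n}d_0+4C_1 N^{-1/2+\xi}$. By the hypotheses of Theorem~\ref{th:main}, $d_0\to 0$ in probability; for any $\gep>0$ the smallness of $d_0$ yields $d_n\leq \gep/2$ for all $n\leq n_0:=\lceil \log_2 N\rceil$, while for $n\geq n_0$ the first term is bounded by $d_0/N \ll N^{-1/2+\xi}$, so that $d_n\leq (4C_1+o(1)) N^{-1/2+\xi}$. To handle intermediate times $t\in[0,T]$ within a block, I combine the boundedness of the linearized semi-flow \eqref{eq:Phi bounded} with the one-step noise bound to obtain $\sup_{t\in[0,T]}\left\Vert\mu_{N,nT+t}-\Gamma_{\theta_n+t}\right\Vert_{\mathbf{H}^{-r}_\theta}\leq C_2 d_n+C_2 N^{-1/2+\xi}$, which delivers the two regimes of the proposition (after possibly shrinking the constants into a slight enlargement of $\xi$). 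The main obstacle is the uniform-in-$n$ control of the nonlinear remainder $\mathcal{E}_n$ over $n_f=O(N)$ blocks; this is precisely where the choice of $m$ in \eqref{hyp m}, built into Proposition~\ref{prop:Vn small}, is essential so that the union bound over $O(N)$ blocks still has probability tending to one.
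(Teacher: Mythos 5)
Your proposal is correct in outline, but it takes a genuinely different route from the paper's proof, so it is worth comparing the two.

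\textbf{What the paper does.} The paper deliberately does \emph{not} use the isochron to prove this proposition (see the remark at the end of Section 2.5): it introduces the auxiliary phase $\ga_n=\mathrm{proj}(\mu_{N,nT})$ characterised by $\Pi^{\gd,s}(\mu_{N,nT}-\Gamma_{\ga_n})=0$, and writes a mild formulation for $\nu^{(n)}_{N,t}=\mu_{N,nT+t}-\Gamma_{\ga_n+t}$ in terms of the linearised flow $\Phi$, a remainder $R_{\ga,s}$ which is quadratic or $O(1/N)$ (Lemma~\ref{lem:Rnu}), and a noise term $Z^{(n)}$ involving the adjoint $\Psi^{\ga_n}_{t,s}$ (controlled via $\cB_N$, Proposition~\ref{prop:control noise}). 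The contraction is then read off the linearised stable semigroup \eqref{eq:Phi contracts}, with the change of reference phase handled via the explicit decomposition \eqref{eq:decom Pst nu n+1}. Only at the very end is the conclusion translated into $\Theta(\mu_{N,nT})$.

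\textbf{What you do.} You work directly with $\theta_n=\Theta(\mu_{N,nT})$, apply the nonlinear isochron contraction of Theorem~\ref{th:Theta} to $T^{T}\mu_{N,nT}$, and control the one-block discrepancy $\cE_n=\mu_{N,(n+1)T}-T^{T}\mu_{N,nT}$ by a Gr\"onwall--Henry estimate against $\cA_N$ (plus a separate BDG bound for the $m$-noise). This avoids the projection $\mathrm{proj}$, the decomposition \eqref{eq:decom Pst nu n+1}, Lemma~\ref{lem:Rnu}, and the operators $\Psi^{\ga}_{t,s}$ entirely. In that sense it is conceptually cleaner, and closer to the spirit of the finite-dimensional argument in \cite{GPS2018}.

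\textbf{Where your outline leaves gaps.} Three points should be flagged. First, the Gr\"onwall--Henry bound on $\cE_n$ needs a Lipschitz estimate for the nonlinear term along both trajectories, which requires a uniform-in-$(n,s)$ a priori bound on $\left\Vert T^{s}\mu_{N,nT}\right\Vert$ (in addition to the bound on $\mu_{N,nT+s}$ from Proposition~\ref{prop: bound mu1t}); the paper avoids this because it linearises around $\Gamma$, which is trivially bounded. This is provable, but it is a separate lemma you do not supply. Second, the constants $C_1$, $C_2$ produced by the Gr\"onwall argument, the phase shift $\theta_{n+1}-(\theta_n+T)$, and Theorem~\ref{th:Theta} will not let you conclude $d_n\leq N^{-\frac12+\xi}$ directly; the paper handles this by iterating with the strictly smaller exponent $N^{-\frac12+\frac{2\xi}{3}}$ and only enlarging to $\xi$ at the end, and you should make the same choice explicit rather than invoking a ``slight enlargement of $\xi$''. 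Third, your bootstrap that $\mu_{N,nT}\in\cW^\gd$ should be implemented through a stopping time (as in \eqref{eq:def ntau tau}) so that the mild formulation and the isochron contraction are applied only on the good event; without this, the applicability of Theorem~\ref{th:Theta} at step $n+1$ is not a priori well posed. None of these is fatal, but they are precisely the places where the paper's choice of $\mathrm{proj}$ and $\Phi$ makes the bookkeeping shorter.
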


\subsection{Adjoint of the semigroup}
Recall that $ \Gamma_{ t}=(q_{ t}, \gamma_{ t})$ is the periodic solution to \eqref{eq:syst PDE} mentioned in Theorem~\ref{th:Gamma} and that $\Phi_{t,s}$ is the semigroup of the linearized dynamics in the neighborhood of the periodic solution $\Gamma_t$. This means in particular that, for any $0\leq s\leq t$, for any regular test function $\varphi_t$ and trajectory $\psi_t$ we have
\begin{multline}
\label{eq:nutht}
\llangle[\big] \Phi_{ u+t, u+s}(\nu);(\varphi_t,\psi_t)\rrangle[\big]= \llangle[\big] \nu;(\varphi_s,\psi_s)\rrangle[\big]\\+ \int_{ s}^{t} \llangle[\big]  \nu_{ v}; \partial_{ v}(\varphi_{ v}, \psi_{ v}) + \mathscr{ L}_{ [q, \gamma]}^{ u+v} \left( \varphi_{ v}, \psi_{ v}\right)\rrangle[\big] {\rm d}v,
\end{multline}
where
\begin{align}
\label{eq:Lqgamma}
\mathscr{ L}_{ [q, \gamma]}^{ u}(\varphi, \psi):= \begin{pmatrix}
 \nabla \cdot(\gs^2 \nabla \varphi)+\nabla \varphi\cdot \left(F_{\gamma_{u}}-Kx -\dot \gamma_{u}\right) - F_{ \gamma_{ u}} \cdot \left\langle q_{ u}\, ,\, \nabla \varphi\right\rangle + \psi\cdot F_{\gamma_{u}}\\
\left\langle q_{u}, DF_{\gamma_{u}}^{ \ast}\nabla \varphi\right\rangle - \left\langle q_{ u}\, ,\, \left\langle q_{ u}\, ,\, DF_{ \gamma_{ u}}^{ \ast}\right\rangle \nabla \varphi\right\rangle + \left\langle q_{ u}\, ,\, DF_{ \gamma_{ u}}^{ \ast}\right\rangle \psi
\end{pmatrix},
\end{align}
where $A^*$ denotes the transposed matrix of $A$. For fixed $u$ and $t\geq0$, $f\in \mathcal{ C}_{ c}^{ \infty}( \mathbb{ R}^{ d})$, $ m\in \mathbb{ R}^{ d}$, consider $(\tilde \varphi_{v}, \tilde \psi_{v})$ with initial condition $(\tilde \varphi_{0}, \tilde \psi_{0}):=(f, m)$ solution to 
\begin{equation}
\left\{\begin{array}{rl}
\label{eq:phipsitilde}
\partial_v \tilde \varphi_{v} &=\,\nabla\cdot (\gs^2\nabla \tilde\varphi_v)+\nabla \tilde\varphi_v\cdot (F_{\gamma_{u+t-v}}-Kx-\dot\gamma_{u+t-v})- F_{\gamma_{u+t-v}}\cdot \langle q_{u+t-v},\nabla \tilde\varphi_v\rangle\\
&\qquad\qquad\qquad \qquad\qquad\qquad \qquad\qquad\qquad  \qquad\qquad \qquad\qquad  +F_{\gamma_{u+t-v}}\cdot \tilde\psi_v\\
 \frac{ {\rm d}}{ {\rm d}v}\tilde \psi_v & =\, \langle q_{u+t-v},DF_{\gamma_{u+t-v}}^*\rangle \tilde\psi_v+\langle q_{u+t-v}, DF_{\gamma_{u+t-v}}^*\nabla \tilde\varphi_v\rangle\\
 &\qquad\qquad\qquad \qquad\qquad\qquad \qquad\quad\qquad-\langle q_{u+t-v},DF_{\gamma_{u+t-v}}^*\langle q_{u+t-v}, \nabla\tilde\varphi_v\rangle\rangle
\end{array}
\right.,
\end{equation}
 and consider for $s\leq t$,
\begin{equation}
\label{eq:Psi12}
\Psi^u_{ t,s}:=\Psi^u_{ t,s}(f, m)= \left(\Psi_{t,s}^{u, 1}(f, m), \Psi_{t,s}^{ u,2}(f, m)\right)= (\tilde{ \varphi}_{ t-s}, \tilde{ \psi}_{ t-s}).
\end{equation}
With  this definition, $ s \mapsto \Psi^u_{ t,s}$ solves 
\begin{equation}
\label{eq:Psi_st_L}
\partial_{ s} \Psi^u_{ t,s}= - \mathscr{ L}_{ [q, \gamma]}^{ u+s}( \Psi^u_{ t,s}),\ s\leq t
\end{equation}
with $\Psi^u_{ t,t}=(f, m)$. We focus in this paragraph on the regularity of the adjoint $ \Psi_{ t, s}^{ u}$ defined by \eqref{eq:Psi12}.
\begin{lemma}
\label{lem:control_psit}
For any $g=(f,m)\in \mathbf{H}^r_\theta$, the evolution equation \eqref{eq:phipsitilde} has a unique solution $( \tilde{ \varphi}_{ v}, \tilde{ \psi}_{ v})$ in $ \mathbf{ H}_{ \theta}^{ r}$ such that $ (\tilde{ \varphi}_{ 0}, \tilde{\psi}_{ 0})=g$. Moreover there exists a constant $C_{\Psi}=C_\Psi(T)>0$ such that for all $g\in \mathbf{ H}_{ \theta}^{ r}$, for all $\beta$ satisfying $0\leq \beta\leq 2$, for all $u\in \bbR$ and for all $0<v<s<t\leq T$,
\begin{equation}\label{eq:control_Psi}
\left\Vert \Psi^u_{ t,s}g \right\Vert_{ \mathbf{ H}_{ \theta}^{ r+\beta}}\leq C_\Psi(t-s)^{- \frac{\beta}{2}}\Vert g\Vert_{ \mathbf{ H}^{ r}_{ \theta}}, 
\end{equation}
\begin{equation}\label{eq:control_Psi_diff}
\left\Vert \left(\Psi^u_{ t,v}- \Psi^u_{ s,v}\right)g\right\Vert_{ \mathbf{ H}_{ \theta}^{ r-\beta}} \leq C_\Psi(t-s)^{\frac{\beta}{2}} \left\Vert g\right\Vert_{ \mathbf{ H}_{ \theta}^{ r}},
\end{equation}
and
\begin{equation}
\left\Vert \left(\Psi^u_{ t,v}- \Psi^u_{ s,v}\right)g\right\Vert_{ \mathbf{ H}_{ \theta}^{ r}} \leq C_\Psi(t-s)^{\frac{\beta}{2}}(s-v)^{-\frac{\beta}{2}}\left\Vert g\right\Vert_{ \mathbf{ H}_{ \theta}^{ r}}.
\end{equation}
Finally, for all $0\leq s \leq t$, $u\geq0$, $\nu\in \mathbf{H}_{\theta}^{-r}$ and all $(f,m)\in \mathbf{H}^r_\theta$ 
\begin{equation}
\label{eq:Phi_Psi_adjoint}
\llangle[\big]\Phi_{u+t,u+s}(\nu),(f,m)\rrangle[\big] = \llangle[\big] \nu, \Psi^u_{t,s}(f,m)\rrangle[\big].
\end{equation}
\end{lemma}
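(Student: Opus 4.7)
The plan is to treat the evolution equation \eqref{eq:phipsitilde} as a semilinear Cauchy problem whose principal part is the Ornstein--Uhlenbeck operator $\mathcal{L}^*_1$ from \eqref{eq:L_OU}, with every remaining piece --- the first-order drift $\nabla\tilde\varphi\cdot(F_{\gamma_{u+t-v}}-\dot\gamma_{u+t-v})$, the nonlocal contribution $-F_{\gamma_{u+t-v}}\cdot\langle q_{u+t-v},\nabla\tilde\varphi\rangle$, the coupling $F_{\gamma_{u+t-v}}\cdot\tilde\psi$, and the $\tilde\psi$-ODE with right-hand side linear in $\tilde\psi$ and in $\nabla\tilde\varphi$ --- viewed as lower-order perturbations. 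Concretely I would write the mild form
\begin{equation*}
\tilde\varphi_v=e^{v\mathcal{L}^*_1}f+\int_0^v e^{(v-\tau)\mathcal{L}^*_1}R^{u,t}_\tau(\tilde\varphi_\tau,\tilde\psi_\tau)\,d\tau,\qquad \tilde\psi_v=m+\int_0^v S^{u,t}_\tau(\tilde\varphi_\tau,\tilde\psi_\tau)\,d\tau,
\end{equation*}
where $R^{u,t}_\tau$ and $S^{u,t}_\tau$ gather the non-OU terms. Since $F$ and all its derivatives are bounded and $\tau\mapsto(q_\tau,\gamma_\tau,\dot\gamma_\tau)$ is $\mathcal{C}^1$ on $[0,T_\delta]$ by Theorem~\ref{th:Gamma}, a direct verification using the equivalent norm \eqref{eq:equiv_normHr} shows that $R^{u,t}_\tau:\mathbf{H}^r_\theta\to\mathbf{H}^{r-1}_\theta$ and $S^{u,t}_\tau:\mathbf{H}^r_\theta\to\mathbb{R}^d$ are bounded uniformly in $u,t,\tau$.

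Combined with the OU smoothing estimate \eqref{eq:cLN bound 1}, which furnishes the integrable singularity $(v-\tau)^{-1/2}$ in the mapping $\mathbf{H}^{r-1}_\theta\to\mathbf{H}^r_\theta$, a standard Picard iteration in $C([0,T];\mathbf{H}^r_\theta\times\mathbb{R}^d)$ produces a unique mild solution and the base bound $\|\Psi^u_{t,s}g\|_{\mathbf{H}^r_\theta}\leq C\|g\|_{\mathbf{H}^r_\theta}$. To upgrade this to \eqref{eq:control_Psi} for $\beta\in(0,2]$, I would re-feed the estimate into the mild formula and invoke \eqref{eq:cLN bound 1} with the appropriate $\beta$, noting that the singular factor $(v-\tau)^{-(\beta+1)/2}$ remains integrable for $\beta<1$; the case $\beta=2$ follows by one further bootstrap, and $\beta\in(1,2)$ by interpolation. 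The Hölder-in-time estimates \eqref{eq:control_Psi_diff} and its refinement are obtained in the usual analytic-semigroup fashion by writing $\Psi^u_{t,v}-\Psi^u_{s,v}$ using the mild formula and applying \eqref{eq:cLN bound 3}--\eqref{eq:cLN bound 4} to the semigroup increments, exactly as in Henry~\cite{Henry:1981}.

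For the adjoint identity \eqref{eq:Phi_Psi_adjoint}, the strategy is to consider the pairing
\begin{equation*}
\tau\longmapsto \llangle[\big] \Phi_{u+\tau,u+s}(\nu),\Psi^u_{t,\tau}(f,m)\rrangle[\big],\qquad \tau\in[s,t].
\end{equation*}
Differentiating and using \eqref{eq:nutht} for the first factor together with the backward equation \eqref{eq:Psi_st_L} for the second factor, the two contributions of $\mathscr{L}^{u+\tau}_{[q,\gamma]}$ cancel exactly, so the pairing is constant in $\tau$; evaluating at $\tau=s$ and $\tau=t$ and using $\Psi^u_{t,t}=(f,m)$ yields \eqref{eq:Phi_Psi_adjoint} for smooth $\nu$ and $(f,m)$, then a density argument in $\mathbf{H}^{-r}_\theta\times\mathbf{H}^r_\theta$ concludes. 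I expect the main technical obstacle to be the bookkeeping needed to verify that $R^{u,t}_\tau$ is bounded $\mathbf{H}^r_\theta\to\mathbf{H}^{r-1}_\theta$ uniformly in $u,t$: the pointwise multiplications by $F_{\gamma_{u+t-\tau}}$ and the nonlocal pairings against the density $q_{u+t-\tau}$ need to be controlled in the weighted scale \eqref{eq:prod_Hr}, which requires combining the Gaussian decay of $q^\delta$ provided by Theorem~\ref{th:Gamma} with the equivalence \eqref{eq:equiv_normHr} and a careful use of the fact that $w_\theta$-weighted integrals of bounded functions are finite.
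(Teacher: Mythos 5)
Your proposal is sound, but it takes a genuinely different route from the paper's proof of the estimates \eqref{eq:control_Psi}--\eqref{eq:control_Psi_diff}. The paper establishes the adjoint identity \eqref{eq:Phi_Psi_adjoint} \emph{first} (exactly as you do, by inserting $\Psi$ as a test trajectory into the weak formulation \eqref{eq:nutht} and using the backward equation \eqref{eq:Psi_st_L}) and then derives all the bounds on $\Psi^u_{t,s}$ \emph{by duality} from the already-proven bounds on $\Phi_{u+t,u+s}$ in Theorem~\ref{th:Gamma}: for instance, \eqref{eq:control_Psi} is literally the transpose of \eqref{eq:Phi bounded}, and \eqref{eq:control_Psi_diff} follows from $(\Phi_{u+t,u+s}-I_d)\Phi_{u+s,u+v}$ plus the increment bound of \cite[Theorem~7.1.3]{Henry:1981}. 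You instead rebuild the $\Psi$-estimates from scratch via the mild formulation and OU smoothing; this is self-contained and constructive, but duplicates work that Theorem~\ref{th:Gamma} has already paid for, and it is technically heavier. The one place you should be more careful is the endpoint $\beta=2$ of \eqref{eq:control_Psi}: the first bootstrap pass gives $H^{r+\alpha}_\theta$ regularity only for $\alpha<1$ (the Duhamel singularity is $(v-\tau)^{-(\alpha+1)/2}$), and with $\alpha<1$ the kernel $(v-\tau)^{-(3-\alpha)/2}$ appearing in the next pass is still non-integrable near $\tau=v$, so ``one further bootstrap'' does not yet reach $\beta=2$. You need a second intermediate step to reach some $\alpha'\in(1,2)$, and only then a third pass closes the Beta integral and gives the $\beta=2$ bound; the paper sidesteps this entirely because the $\Phi$-estimate at $\beta=2$ already sits in Theorem~\ref{th:Gamma}. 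For \eqref{eq:control_Psi_diff} at $\beta=2$, the cocycle factorisation $\Psi^u_{t,v}-\Psi^u_{s,v}=\Psi^u_{s,v}\bigl(\Psi^u_{t,s}-I\bigr)$ together with the fact that $\mathscr L^{u+\tau}_{[q,\gamma]}:\mathbf H^r_\theta\to\mathbf H^{r-2}_\theta$ is bounded gives the claim directly, which you may want to state explicitly rather than waving at ``the usual analytic-semigroup fashion.''
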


\begin{proof}[Proof of Lemma~\ref{lem:control_psit}]
The existence and uniqueness result is classical, see for example \cite{Henry:1981,sell2013dynamics}. Considering now $( \varphi_{ v}, \psi_{ v})= \Psi_{t, v}^{ u}(f, m)$ for the choice of  $g=(f,m)$ with $f\in \mathcal{ C}_{ c}^{ \infty}(\mathbb{ R}^{ d})$  into \eqref{eq:nutht}, we obtain  by \eqref{eq:Psi_st_L} that  $\llangle[\big]\Phi_{u+t,u+s}(\nu,h),g\rrangle[\big] = \llangle[\big] (\nu,h), \Psi^u_{t,s}(g)\rrangle[\big]$. From Theorem~\ref{th:Gamma}, we obtain
\[
\left| \llangle[\big] (\nu, h), \Psi^u_{t,s}(g)\rrangle[\big]\right| \leq C\left(1+(t-s)^{- \frac{ \beta}{2}} e^{ -\lambda(t-s)}\right)\left\Vert (\nu,h)\right\Vert_{\mathbf{H}^{-(r+\beta)}_\theta}\left\Vert g\right\Vert_{\mathbf{H}^r_\theta},
\]
which implies that $g \mapsto \Psi^u_{t,s}(g)$ can be uniquely extended to a continuous operator on $ \mathbf{ H}_{ \theta}^{ r+ \beta}$ which satisfies \eqref{eq:Phi_Psi_adjoint} and whose norm satisfies \eqref{eq:control_Psi}.
For the second bound of the lemma, remark first that
\begin{multline}
 \llangle[\big] \nu,(\Psi^u_{ t,v}- \Psi^u_{ s,v})g\rrangle[\big] = \llangle[\big](\Phi_{u+t,u+v}-\Phi_{u+s,u+v}) \nu,g\rrangle[\big] \\=\llangle[\big](\Phi_{u+t,u+s}-I_d)\Phi_{u+s,u+v}\nu,g\rrangle[\big] 
\end{multline}
But from \cite[Theorem 7.1.3.]{Henry:1981} we have
\begin{equation}
\left\Vert (\Phi_{u+t,u+s}-I_d)\nu\right\Vert_{\mathbf{H}^{-r}_\theta}\leq C_{\beta} (t-s)^{\frac{\beta}{2}}\left\Vert \nu\right\Vert_{\mathbf{H}^{-r+\beta}_\theta},
\end{equation}
and thus
\begin{align}
 \llangle[\big] \nu,(\Psi^u_{ t,v}- \Psi^u_{ s,v})g\rrangle[\big]& \leq C_{\beta} (t-s)^{\frac{\beta}{2}}\left\Vert \Phi_{u+s,u+v}\nu\right\Vert_{\mathbf{H}^{-r+\beta}_\theta}\left\Vert g\right\Vert_{\mathbf{H}^r_\theta}\\
 &\leq C_{\beta'}C' (t-s)^{\frac{\beta}{2}}\left\Vert \nu\right\Vert_{\mathbf{H}^{-r+\beta}_\theta}\left\Vert g\right\Vert_{\mathbf{H}^r_\theta},
\end{align}
which gives the results, since we have the bounds $\left\Vert \Phi_{u+s,u+v}\nu\right\Vert_{\mathbf{H}^{-r+\beta}_\theta}\leq C\left\Vert \nu\right\Vert_{\mathbf{H}^{-r+\beta}_\theta}$ and $\left\Vert \Phi_{u+s,u+v}\nu\right\Vert_{\mathbf{H}^{-r+\beta}_\theta}\leq C(s-v)^{-\frac{\beta}{2}}\left\Vert \nu\right\Vert_{\mathbf{H}^{-r}_\theta}$.
\end{proof}

\subsection{Mild formulation and iterative scheme}

The following Lemma is a direct consequence of the implicit function Theorem (relying on the fact that $t\mapsto \Pi^{\gd,s}_t$ is $C^1$) and the fact that the periodic solution defines a compact manifold $\cM=\{\Gamma_t:\, t\in [0,T_\gd]\}$. This Lemma gives the existence of a projection $\mathrm{proj}$, and we emphasize the fact that the phase obtained with this projection is different from the one given by the map $\Theta$ given by Theorem~\ref{th:Theta}.
\begin{lemma}
There exists a $R_{\mathrm{proj}}>0$ such that for $\mu$ such that $\dist_{\mathbf{H}^{-r}_\theta}(\mu,\cM)\leq R_{\mathrm{proj}} $, there exists a unique $\ga=:\mathrm{proj}(\mu)\in \bbR/(T_\gd \bbZ)$ such that $\Pi^{\gd,s}\left(\mu-\Gamma_\ga\right)=0$, and the mapping $\mu\mapsto \mathrm{proj}(\mu)$ is $C^1$ with derivative uniformly bounded in the $R_{\mathrm{proj}}$-neighborhood of $\cM_\gd$ by a constant $C_{\mathrm{proj}}$.
\end{lemma}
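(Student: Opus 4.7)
The plan is to derive the lemma from a standard application of the Banach-space implicit function theorem, as the parenthetical remark already suggests. Geometrically, the desired projection is the one that sends $\mu$ onto $\cM$ along the stable foliation at $\Gamma_\alpha$: it picks the unique $\alpha$ for which $\mu - \Gamma_\alpha$ has no tangential component at $\Gamma_\alpha$ (equivalently, $\mu - \Gamma_\alpha\in\mathrm{Range}(\Pi^{\gd,s}_\alpha)$). Since the range of $\Pi^{\gd,c}_\alpha$ is the one-dimensional tangent line spanned by $\dot\Gamma_\alpha$, one can write $\Pi^{\gd,c}_\alpha(v) = \xi_\alpha(v)\,\dot\Gamma_\alpha$ for a unique continuous linear functional $\xi_\alpha$ on $\mathbf{H}^{-r}_\theta$, normalized by $\xi_\alpha(\dot\Gamma_\alpha)=1$, and the problem reduces to the scalar equation
\[
\widetilde G(\mu,\alpha):=\xi_\alpha(\mu - \Gamma_\alpha)=0.
\]
The regularity $\widetilde G\in C^1$ follows from Theorem~\ref{th:Gamma}, which gives $\alpha \mapsto \Pi^{\gd,c}_\alpha$ of class $C^1$ into $\cB(\mathbf{H}^{-r}_\theta)$ on the compact torus $\bbR/(T_\gd\bbZ)$ and $\alpha\mapsto\Gamma_\alpha$ of class $C^1$ with derivative $\dot\Gamma_\alpha$ continuous and nowhere vanishing in $\mathbf{H}^{-r+2}_\theta$.

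The decisive computation is that, at any base point $\alpha_0$,
\[
\widetilde G(\Gamma_{\alpha_0},\alpha_0)=0 \qquad\text{and}\qquad \partial_\alpha \widetilde G(\Gamma_{\alpha_0},\alpha_0)=-\xi_{\alpha_0}(\dot\Gamma_{\alpha_0})=-1,
\]
the contribution from $\partial_\alpha \xi_\alpha$ vanishing because $\mu - \Gamma_\alpha=0$ at this point. This partial derivative is thus an invertible linear map $\bbR\to\bbR$, and the implicit function theorem yields, in a neighborhood $U_{\alpha_0}\subset \mathbf{H}^{-r}_\theta$ of $\Gamma_{\alpha_0}$, a unique $C^1$ map $\mathrm{proj}$ solving $\widetilde G(\mu,\mathrm{proj}(\mu))=0$ with $\mathrm{proj}(\Gamma_{\alpha_0})=\alpha_0$. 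Implicit differentiation of this identity gives the explicit formula
\[
D\,\mathrm{proj}(\mu)\cdot h = -\bigl(\partial_\alpha \widetilde G(\mu,\mathrm{proj}(\mu))\bigr)^{-1}\,\xi_{\mathrm{proj}(\mu)}(h).
\]

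To promote these local statements into a uniform tubular neighborhood of $\cM$, I would use that $\cM$ is the continuous image of the compact torus $\bbR/(T_\gd\bbZ)$, hence compact in $\mathbf{H}^{-r}_\theta$. Covering $\cM$ by finitely many charts from the local IFT produces a single radius $R_{\mathrm{proj}}>0$ valid at every base point. Global uniqueness of $\mathrm{proj}(\mu)$ against phases far from the local chart is obtained by further shrinking $R_{\mathrm{proj}}$: by continuity of $\alpha\mapsto\Gamma_\alpha$ and compactness of $\cM$, for $\mu$ sufficiently close to $\cM$ only one arc of $\cM$ is within a fixed distance of $\mu$, so spurious solutions are ruled out. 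The uniform bound $C_{\mathrm{proj}}$ then follows from the formula for $D\,\mathrm{proj}(\mu)$ by controlling $\|\xi_\alpha\|$ uniformly on the compact $\cM$ and observing that $\partial_\alpha\widetilde G(\mu,\mathrm{proj}(\mu))$ stays close to $-1$ throughout the tube. The only mildly delicate point in this otherwise routine argument is this last uniform invertibility off of $\cM$, which is handled by a standard continuity estimate combined with making $R_{\mathrm{proj}}$ small enough.
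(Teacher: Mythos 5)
Your proof is correct and follows exactly the route the paper indicates: the paper offers only the one-line remark that the lemma is ``a direct consequence of the implicit function Theorem (relying on the fact that $t\mapsto \Pi^{\gd,s}_t$ is $C^1$) and the fact that the periodic solution defines a compact manifold,'' and you have filled in precisely those details --- reducing to a scalar equation $\xi_\alpha(\mu-\Gamma_\alpha)=0$, verifying $\partial_\alpha\widetilde G(\Gamma_{\alpha_0},\alpha_0)=-1$, and using compactness of the orbit $\cM$ to obtain a uniform $R_{\mathrm{proj}}$ and $C_{\mathrm{proj}}$. Incidentally, you correctly read through a typo in the lemma statement: the condition should be $\Pi^{\gd,c}_\alpha(\mu-\Gamma_\alpha)=0$ (equivalently, $\mu-\Gamma_\alpha\in\mathrm{Range}(\Pi^{\gd,s}_\alpha)$), as is indeed used in the proof of Proposition~\ref{prop:closeness to M} where $P^{st}_{\alpha_{n+1}}\nu^{(n+1)}_{N,0}=\nu^{(n+1)}_{N,0}$, and your formulation $\widetilde G(\mu,\alpha)=\xi_\alpha(\mu-\Gamma_\alpha)=0$ encodes exactly this.
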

Recall here the estimates of Theorem~\ref{th:Gamma}. We suppose in the following that $T$ satisfies
\begin{equation}\label{hyp:T 2}
C_PC_\Phi e^{-\lambda T}\leq \frac14.
\end{equation}
and we consider the following couple of stopping times (recall that $n_f$ was defined in \eqref{def:nf})
\begin{equation}\label{eq:def ntau tau}
(n_\tau,\tau)=\inf\left\{n\in \{0,\ldots,n_f-1\}\times [0,T]:\, \left\Vert \mu_{nT+\tau}-\Gamma_{\Theta(\mu_{nT})+\tau}\right\Vert_{\mathbf{H}^{-r}_\theta}>R_{\mathrm{proj}}\right\}.
\end{equation}
Then, denoting
\begin{equation}
\tau^n = \left\{
\begin{array}{lll}
T & \text{if} & n<n_\tau\\
\tau & \text{if} & n\geq n_\tau
\end{array}
\right.,
\end{equation}
we define, for $n\in \{1,\ldots,n_f-1\}$,
\begin{equation}
\ga_n = \mathrm{proj}\left(\mu_{N,(n\wedge n_\tau)T}\right).
\end{equation}
as well as
\begin{equation}
\nu_{N,t}^{ (n)}:=(\eta_{N,t}^{ (n)},h_{N,t}^{ (n)})
\end{equation}
where
\begin{equation}
\nu_{N,t}^{ (n)} := \mu_{N,(n\wedge n_\tau)T+t\wedge \tau^n} -\Gamma_{\ga_n+t\wedge\tau^n}.
\end{equation}
Recall below the definition of $ \Psi_{ t, s}^{ u}$ in \eqref{eq:Psi12}.
\begin{lemma}
For all $n\in\{0,\ldots,n_f-1\}$, the process $\left(\nu_{ N, t}^{ (n)} \right)_{ t\in [0, T]}$ satisfies the following equation in $ \mathcal{ C} \left([0, T], \mathbf{ H}_{ \theta}^{ -r}\right)$, written in a mild form: for all $t\in [0,T]$,
\begin{equation}
\label{eq:mild_nuN}
\nu_{N,t}^{ (n)}=\Phi_{\ga_n+t\wedge \tau^n,\ga_n}\nu_{N,0}^{ (n)}+\int_0^{t\wedge \tau^n} \Phi_{\ga_n+t\wedge \tau^n,\ga_n+s} R_{\ga_n,s}(\nu_{N,s}^{ (n)})\dd s +Z_{N,t\wedge \tau^n}^{ (n)},
\end{equation}
where for $\nu=(\eta,h)\in\mathbf{H}^{-r}_\theta$,
\begin{equation}
\label{eq:Ralphas}
R_{\ga,s}(\eta,h) = (R^1_{\ga,s}(\eta,h),R^2_{\ga,s}(\eta,h)),
\end{equation}
with
\begin{align}
R^1_{\ga,s}(\eta,h) :=\, & -\frac1N \nabla\cdot\gs^2\nabla( q_{\ga+s}+\eta) - \delta\nabla\cdot\left(\eta\left(F_{\gamma_{\ga+s}+h}-F_{\gamma_{\ga+s}}\right)\right) \label{eq:R1}\\
&+ \delta\nabla\cdot\left( \eta \langle \eta, F_{\gamma_{\ga+s}+h}\rangle\right) + \delta\nabla\cdot\left( \eta \langle q_{\ga+s}, F_{\gamma_{\ga+s}+h} - F_{ \gamma_{ \alpha+s}}\rangle\right)\nonumber\\
&- \delta\nabla\cdot\left(q_{\ga+s}\left(F_{\gamma_{\ga+s}+h}- F_{\gamma_{\ga+s}} -DF_{\gamma_{\ga+s}}[h]\right) \right)\nonumber\\
&+ \delta\nabla\cdot\big( q_{\ga+s}\big( \langle \eta,F_{\gamma_{\ga+s}+h} - F_{\gamma_{\ga+s}}\rangle \nonumber\\
&\qquad \qquad+  \langle q_{\ga+s},F_{\gamma_{\ga+s}+h}- F_{\gamma_{\ga+s}}-DF_{\gamma_{\ga+s}}[h] \rangle \big)\big)\nonumber,
\end{align}
and
\begin{equation}
R^2_{\ga,s}(\eta,h) :=\,  \delta \left(\langle\eta, F_{\gamma_{\ga+s}+h} - F_{\gamma_{\ga+s}}\rangle + \langle q_{\ga+s}, F_{\gamma_{\ga+s}+h} - F_{\gamma_{\ga+s}}-DF_{\gamma_{\ga+s}}[h] \rangle\right) \label{eq:R2}
\end{equation}
and where $Z_{ N, t}^{ (n)}$ is the limit in $ \mathbf{ H}_{ \theta}^{ -r}$ as $t^{ \prime} \nearrow t$ of $Z_{ N, t^{ \prime}, t}^{ (n)}$ given by, for all $ g\in \mathbf{ H}^{r}_\theta$,
\begin{multline}
\label{eq:ZNttprime}
Z_{N,t',t}^{ (n)} =\frac{\sqrt{2}}{N}\sum_{i=1}^N\int_0^{ t^{ \prime}} \nabla \Psi_{t,v}^{ \ga_n, 1}(g)(Y_{i,v})\cdot \gs \left( \dd B_{i,nT+v} -\frac{1}{N}\sum_{j=1}^N \dd B_{j,nT+v}\right)\\
+\frac{\sqrt{2}}{N}\int_0^{ t^{ \prime}} \Psi_{ t, v}^{ \ga_n, 2}(g) \cdot \gs \sum_{j=1}^N \dd B_{j,nT+v},
\end{multline}
that we denote by
\begin{multline}
Z_{N,t}^{ (n)}(g):= \frac{\sqrt{2}}{N}\sum_{i=1}^N\int_0^{ t} \nabla \Psi_{t,v}^{ \ga_n, 1}(g)(Y_{i,v})\cdot \gs \left( \dd B_{i,nT+v} -\frac{1}{N}\sum_{j=1}^N \dd B_{j,nT+v}\right)\\ +\frac{\sqrt{2}}{N}\int_0^{ t} \Psi_{ t, v}^{ \ga_n, 2}(g) \cdot \gs \sum_{j=1}^N \dd B_{j,nT+v}.
\end{multline}
\end{lemma}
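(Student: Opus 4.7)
The strategy parallels the derivation of \eqref{eq:decomp mu1-g} for $p_{N,t}$ but now linearizes around the periodic solution $\Gamma$ instead of around the Gaussian stationary state $g_N$. Accordingly, the Ornstein--Uhlenbeck semigroup $e^{t\cL_N}$ is replaced by the linearized semigroup $\Phi$, and its dual $e^{t\cL_N^*}$ by the backward flow $\Psi$ of \eqref{eq:phipsitilde}--\eqref{eq:Psi12}. The core identity is \eqref{eq:Psi_st_L}: for any terminal datum $(f,m)\in\mathbf{H}^r_\theta$ the curve $v\mapsto(\varphi_v,\psi_v)=\Psi^{\alpha_n}_{t,v}(f,m)$ satisfies $\partial_v(\varphi_v,\psi_v)=-\mathscr{L}^{\alpha_n+v}_{[q,\gamma]}(\varphi_v,\psi_v)$, so that any linearized drift against this test function is killed by integration by parts in $v$.

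I would first apply It\^o's formula on the interval $[nT,nT+t]$ to $\varphi_v(Y_{i,nT+v})$ using the first line of \eqref{eq:eds_Yi}, sum over $i=1,\dots,N$, and add the contribution $\psi_v\cdot m_{N,nT+v}$ obtained from the second line of \eqref{eq:eds_Yi}. This produces a weak identity for $\llangle\mu_{N,nT+t};(f,m)\rrangle$. From it I would subtract the corresponding identity for $\Gamma$ obtained by pairing the strong form of \eqref{eq:syst PDE} with $(\varphi_v,\psi_v)$. After the cancellation induced by \eqref{eq:Psi_st_L}, only three types of terms survive: the initial contribution $\llangle\nu^{(n)}_{N,0};\Psi^{\alpha_n}_{t,0}(f,m)\rrangle$; integrals of $\nu^{(n)}_{N,v}$ paired with $(\varphi_v,\psi_v)$ through a nonlinear residual; and stochastic integrals against the $B_{i,\cdot}$.

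The residual is precisely $R_{\alpha_n,v}$ of \eqref{eq:Ralphas}. Its shape is dictated by two mechanisms. The diffusive term $-\frac{1}{N}\nabla\cdot\sigma^2\nabla(q_{\alpha+s}+\eta)$ is the finite-$N$ correction coming from the factor $(1-\frac{1}{N})$ in the quadratic variation of $Y_i$ in \eqref{eq:eds_Yi}, compared with the full Laplacian in \eqref{eq:syst PDE}. All other terms encode the gap between the true nonlinear drift of $(p_N,m_N)$ and its linearization at $(q_{\alpha+s},\gamma_{\alpha+s})$: writing $F_{m_{N,s}}=F_{\gamma_{\alpha_n+s}+h}$, one Taylor-expands $F_{\gamma+h}-F_\gamma-DF_\gamma[h]$ for the genuinely second-order part in $h$, and obtains bilinear products $\eta\cdot(F_{\gamma+h}-F_\gamma)$ or $\eta\cdot\langle\eta,\cdot\rangle$ for the parts that mix $\eta$ and $h$. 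A direct comparison with the primal version of \eqref{eq:Lqgamma} acting on $(\eta,h)$ exhibits the precise components \eqref{eq:R1} and \eqref{eq:R2}.

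The noise contributions combine into \eqref{eq:ZNttprime}: the gradient piece comes from $\nabla\varphi_v(Y_{i,nT+v})$ tested against $\sigma\dd B_{i,\cdot}-\frac{\sigma}{N}\sum_j\dd B_{j,\cdot}$, and the translational piece from $\psi_v\cdot\frac{\sigma}{N}\sum_j\dd B_{j,\cdot}$. To convert the resulting weak identity into the mild form \eqref{eq:mild_nuN}, one invokes the adjoint relation \eqref{eq:Phi_Psi_adjoint}. The main obstacle, as in the proof of \eqref{eq:decomp mu1-g}, lies in the functional-analytic justifications: the singular estimate $\|\Psi^{\alpha}_{t,s}(f,m)\|_{\mathbf{H}^{r+1}_\theta}\leq C_\Psi(t-s)^{-1/2}\|(f,m)\|_{\mathbf{H}^r_\theta}$ of \eqref{eq:control_Psi} is needed to make the deterministic Bochner integral well defined in $\mathbf{H}^{-r}_\theta$ despite the $\nabla\cdot$'s appearing in $R_{\alpha,s}$, and a Kolmogorov-type argument analogous to Lemmas~\ref{lem:VNi}--\ref{lem:bound Vnt-Vns} yields almost-sure continuity in $\mathbf{H}^{-r}_\theta$ of $t\mapsto Z^{(n)}_{N,t}$ together with the existence of the limit $t'\nearrow t$ in \eqref{eq:ZNttprime}. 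The stopping times $\tau^n$ enter only to guarantee that $\alpha_n=\mathrm{proj}(\mu_{N,(n\wedge n_\tau)T})$ is well defined and that the Taylor expansions performed above remain inside the projection neighborhood of $\cM^\gd$ where all functionals used are smooth.
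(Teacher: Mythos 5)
Your proposal is correct and follows essentially the same route as the paper: apply Itô's formula to $\varphi_v(Y_{i,nT+v})$ and $\psi_v\cdot m_{N,nT+v}$, subtract the weak identities for $(q_{\alpha_n+\cdot},\gamma_{\alpha_n+\cdot})$, test against the backward flow $\Psi^{\alpha_n}_{t,s}$ so that the linearized drift is annihilated by \eqref{eq:Psi_st_L}, identify the surviving residual as $R_{\alpha_n,s}$ via the $(1-\tfrac1N)$ diffusive mismatch and the Taylor expansion of $F$, and pass to the mild form through the adjoint relation \eqref{eq:Phi_Psi_adjoint}. The remaining functional-analytic justifications (Bochner integrability via \eqref{eq:control_Psi}, continuity of the stochastic integral via the BDG/Kolmogorov argument of Lemmas~\ref{lem:VNi} and \ref{lem:MNs}, and the role of the stopping times) are accurately flagged and match the paper's treatment.
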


\begin{proof}
We prove this result for $n=0$, (the calculations being identical for $n\geq 1$) and write for $t\leq \tau^0$, $ \nu_{ N, t}= \nu_{ N, t}^{ (0)}=(\eta_{N,t},h_{N,t})= \left(\mu_{N,t}-\Gamma_{\ga_0+t} \right)$. Remark that for any smooth deterministic trajectory $\psi_t$ in $\bbR^d$, we have
\begin{multline}
\label{eq:mN_weak}
m_{N,t}\cdot \psi_t = m_{N,0}\cdot \psi_0+\int_0^t m_{N,s}\cdot \dot \psi_s ds+\int_0^t \psi_s \cdot  \langle \mu_{N,s}, F_{m_{N,s}}\rangle  \dd s \\
+ \frac{\sqrt{2}}{N}\int_0^t \psi_s \cdot \gs \sum_{j=1}^N \dd B_{j,t}.
\end{multline}
Now, since
\begin{multline}
\label{eq:q_weak}
\langle q_{\ga_0+t},\varphi_t\rangle  = \langle q_{\ga_0},\varphi_0\rangle \\+ \int_0^t \left\langle q_{\ga_0+s}, \partial_s \varphi_s+\nabla\cdot (\gs^2\nabla \varphi_s) +\nabla\varphi_s\cdot\left(F_{\gamma_{\ga_0+s}}-Ky -\langle q_{\ga_0+s},F_{\gamma_{\ga_0+s}}\rangle  \right)\right\rangle \dd s,
\end{multline}
and
\begin{equation}
\label{eq:gamma_weak}
\gamma_{\ga_0+t}\cdot \psi_t =\gamma_{\ga_0}\cdot \psi_0+ \int_0^t \gamma_{\ga_0+s}\cdot \dot \psi_s ds + \int_0^t \langle q_{\ga_0+s}, F_{\gamma_{\ga_0+s}}\rangle \cdot \psi_s \dd s,
\end{equation}
we obtain, for any smooth function $\varphi_t$, recalling \eqref{eq:muN_weak}, \eqref{eq:mN_weak}, \eqref{eq:q_weak}, \eqref{eq:gamma_weak} and the definition of $ \nu_{ N}$ and $h_{ N}$:
\begin{align}
\llangle[\big] &\nu_{N,t}; (\varphi_t,\psi_t)\rrangle[\big] \nonumber\\
&=\,  \llangle[\big] \nu_{N,0};(\varphi_0,\psi_0)\rrangle[\big] \\
&\quad +\int_0^t \left\langle \eta_{N,s},\partial_s \varphi_s+\nabla \cdot(\gs^2 \nabla \varphi_s)+\nabla \varphi_s\cdot \left(F_{\gamma_{\ga_0+s}}-Ky -\dot \gamma_{\ga_0+s}\right) \right\rangle \dd s \nonumber\\
&\quad+\int_0^t \left\langle q_{\ga_0+s}, \nabla \varphi_s\cdot\left(DF_{\gamma_{\ga_0+s}}[h_{N,s}]-\langle q_{\ga_0+s},DF_{\gamma_{\ga_0+s}}[h_{N,s}]\rangle-\langle \eta_{N,s},F_{\gamma_{\ga_0+s}}\rangle\right)\right\rangle \dd s\nonumber \\
&\quad+ \int_0^t h_{N,s}\cdot \dot \psi_s ds +\int_0^t \psi_s\cdot \left(\langle \eta_{N,s}, F_{\gamma_{\ga_0+s}}\rangle +\langle q_{\ga_0+s},DF_{\gamma_{\ga_0+s}}[h_{N,s}]\rangle  \right)\dd s\nonumber\\
&\quad+ A_{N,t} + M_{N,t}\nonumber,
\end{align}
where
\begin{align}
A_{N,t} :=& -\frac1N\int_0^t \langle q_{\ga_0+s}+\eta_{N,s}, \nabla \cdot (\gs^2\nabla \varphi_s) \rangle \dd s \nonumber\\
&+\int_0^t\left\langle \eta_{N,s}, \nabla \varphi_s \cdot\left(F_{\gamma_{\ga_0+s}+h_{N,s}}-F_{\gamma_{\ga_0+s}}\right)\right\rangle \dd s\nonumber\\
&- \int_0^t\left\langle \eta_{N,s}, \nabla \varphi_s \cdot\left( \langle q_{\ga_0+s}+\eta_{N,s}, F_{\gamma_{\ga_0+s}+h_{N,s}}\rangle-\dot\gamma_{\ga_0+s}  \right)\right\rangle \dd s \\
&+\int_0^t\left\langle q_{\ga_0+s}, \nabla \varphi_s \cdot\left(F_{\gamma_{\ga_0+s}+h_{N,s}}-F_{\gamma_{\ga_0+s}} -DF_{\gamma_{\ga_0+s}}[h_{N,s}]\right) \right\rangle \dd s \nonumber\\
&-\int_0^t \big\langle q_{\ga_0+s}, \nabla \varphi_s \cdot\big( \langle q_{\ga_0+s}+\eta_{N,s},F_{\gamma_{\ga_0+s}+h_{N,s}}\rangle \nonumber\\
&  \qquad \qquad \qquad \qquad \qquad-\langle q_{\ga_0+s},F_{\gamma_{\ga_0+s}}+DF_{\gamma_{\ga_0+s}}[h_{N,s}]\rangle -\langle \eta_{N,s},F_{\gamma_{\ga_0+s}}\rangle \big)\big\rangle \dd s \nonumber\\
&+ \int_0^t \psi_s\cdot \big(\langle q_{\ga_0+s}+\eta_{N,s}, F_{\gamma_{\ga_0+s}+h_{N,s}}\rangle\nonumber\\
& \qquad \qquad \qquad \qquad \qquad -\langle q_{\ga_0+s},F_{\gamma_{\ga_0+s}}+DF_{\gamma_{\ga_0+s}}[h_{N,s}]\rangle  -\langle \eta_{N,s},F_{\gamma_{\ga_0+s}}\rangle \big)\dd s\nonumber,
\end{align}
and
\begin{equation}
M_{N,t}:= \frac{\sqrt{2}}{N}\sum_{i=1}^N\int_0^t \nabla \varphi_s(Y_{i,s})\cdot \gs \left( \dd B_{i,s } -\frac{1}{N}\sum_{j=1}^N \dd B_{j,s}\right)+\frac{\sqrt{2}}{N}\int_0^t \psi_s \cdot \gs \sum_{j=1}^N \dd  B_{j,t}.
\end{equation}
We finally deduce that
\begin{align}
\llangle[\big] \nu_{N,t}; (\varphi_t,\psi_t)\rrangle[\big] =& \,\llangle[\big] \nu_{N,0};(\varphi_0,\psi_0)\rrangle[\big]\nonumber\\
&+\int_0^t \llangle[\big] \nu_{N,s}; \partial_{ s}(\varphi_{ s}, \psi_{ s})+ \mathscr{ L}_{ [q, \gamma]}^{ \ga_0+s}(\varphi_s,\psi_s)\rrangle[\big]\dd s\\
&+\int_0^t \llangle[\big] R_{\ga_0,s}(\nu_{N,s}); (\varphi_{ s}, \psi_{ s})\rrangle[\big]\dd s+ M_{N,t}.
\end{align}
It remains to choose $(\varphi_s,\psi_s)=\Psi^{\ga_0}_{t,s}(g)$ with $g$ smooth and  to use approximation arguments to obtain the result.
\end{proof}

\subsection{Control of the noise}

Define
\begin{equation}
\label{eq:BN}
\cB_N=\left\{ \max_{n\in \{0,\ldots, n_f-1\}}\sup_{t\in [0,T]}\left\Vert Z_{N,t}^{ (n)}\right\Vert_{\mathbf{H}^{-r}_\theta}\leq N^{-\frac12+\frac{\xi}{3}}\right\}.
\end{equation}
The aim of this section is to obtain the following result.

\begin{proposition}\label{prop:control noise}
Let $(r, \gamma, \theta)$ be fixed as in \eqref{hyp gamma xi}, \eqref{hyp:theta} and \eqref{hyp:r} and recall $ \delta_{ 1}>0$ given by Proposition~\ref{prop: bound mu1t}. Then for all $\gd\in (0,\gd_1)$, $\bbP(\cB_N)\rightarrow 1$ as $N\rightarrow\infty$.
\end{proposition}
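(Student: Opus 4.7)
The plan is to follow the strategy used for Proposition~\ref{prop:Vn small}: derive an $L^{2m}$-bound on the increments of $Z^{(n)}_{N,\cdot}$ of order $N^{-m}(t-s)^{m/2}$ in $\mathbf{H}^{-r}_\theta$, upgrade it to a uniform-in-$t$ bound via the Garsia--Rademich--Rumsey lemma, and conclude by Markov's inequality together with a union bound over the $n_f = O(N)$ intervals.

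A first subtlety, absent in Proposition~\ref{prop:Vn small}, is that the adjoint semigroup $\Psi^{\alpha_n}_{t,v}$ driving $Z^{(n)}_{N,t}$ depends on the random phase $\alpha_n = \mathrm{proj}(\mu_{N,(n\wedge n_\tau)T})$. Since $n_\tau$ is a stopping time, $\alpha_n$ is $\cF_{nT}$-measurable, and conditionally on $\cF_{nT}$ one may treat $\Psi^{\alpha_n}_{t,v}$ as deterministic, which makes $t\mapsto Z^{(n)}_{N,t}$ a genuine $\mathbf{H}^{-r}_\theta$-valued martingale on $[0,T]$. Because all the constants in Lemma~\ref{lem:control_psit} are uniform in the base phase $u$, the moment bounds derived on each piece do not depend on $\alpha_n$ and integrate out cleanly.

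For $0\leq s'<s<t'<t\leq T$, one decomposes $Z^{(n)}_{N,t',t}-Z^{(n)}_{N,s',s}$ into six martingale increments: four in analogy with $V^{(n,1)},\dots,V^{(n,4)}$ from Lemma~\ref{lem:VNi}, plus two further pieces coming from the new term $N^{-1}\Psi^{\alpha_n,2}_{t,v}\cdot\sigma\sum_j \mathrm{d}B_{j,nT+v}$. Applying the Hilbert-space Burkholder--Davis--Gundy inequality to each piece and computing its Meyer process as in Lemma~\ref{lem:VNi}, one then converts the pointwise evaluations at $Y_{i,nT+v}$ into weighted $H^{r-\beta}_\theta$-norms via Lemma~\ref{lem:control_delta_x}, which yields an exponential weight $w_{-2\theta/3}(Y_{i,nT+v})$, and one uses the smoothing estimate \eqref{eq:control_Psi} (resp.\ the Hölder-in-time bound \eqref{eq:control_Psi_diff}) of Lemma~\ref{lem:control_psit} to furnish the $(t-v)^{-\beta/2}$ (resp.\ $(t-s)^{\beta/2}$) factors. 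Jensen in the $m$-th power of the time integral, combined with Lemma~\ref{lem:control_exp_Yi} (applicable because $4m\theta/3\leq \gamma$ by \eqref{hyp:theta} and \eqref{hyp m}), then yields
\begin{equation*}
\mathbf{E}\bigl[\lVert Z^{(n)}_{N,t} - Z^{(n)}_{N,s}\rVert^{2m}_{\mathbf{H}^{-r}_\theta}\bigr] \leq \frac{\kappa}{N^m}(t-s)^{m/2}.
\end{equation*}
The two center-of-mass pieces are in fact of smaller order, the extra $1/N$ prefactor compensating the $\sqrt{N}$ from the quadratic variation of $\sum_j B_{j,\cdot}$.

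Lemma~\ref{lem:GRR} applied with $\psi(u)=u^{2m}$ and $\chi(u)=u^{1/4+1/(4m)}$, exactly as in the proof of Proposition~\ref{prop:Vn small}, gives $\mathbf{E}[\sup_{t\in[0,T]}\lVert Z^{(n)}_{N,t}\rVert^{2m}_{\mathbf{H}^{-r}_\theta}] \leq C/N^m$, from which Markov's inequality yields $\mathbf{P}(\sup_t \lVert Z^{(n)}_{N,t}\rVert_{\mathbf{H}^{-r}_\theta} > N^{-1/2+\xi/3}) \lesssim N^{-2m\xi/3}$, and a union bound over $n\in\{0,\dots,n_f-1\}$ produces $\mathbf{P}(\cB_N^c) = O(N^{1-2m\xi/3})$, which vanishes as $N\to\infty$ provided $m$ is large enough, a condition ensured by \eqref{hyp m}. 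The main obstacle throughout is the dependence of $\Psi^{\alpha_n}$ on the random phase $\alpha_n$, circumvented by conditioning on $\cF_{nT}$ and exploiting the phase-uniformity of the bounds in Lemma~\ref{lem:control_psit}.
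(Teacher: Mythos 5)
Your proposal is correct and follows essentially the same route as the paper: split $Z^{(n)}_{N,t',t}-Z^{(n)}_{N,s',s}$ into six $\mathbf{H}^{-r}_\theta$-valued martingale increments (the four analogues of $V^{(n,1)},\dots,V^{(n,4)}$ together with two new pieces from the $\Psi^{\alpha_n,2}$-component), bound their $L^{2m}$-moments via Burkholder--Davis--Gundy, the Meyer process computation of Lemma~\ref{lem:MNs}, Lemma~\ref{lem:control_delta_x}, the adjoint-semigroup bounds of Lemma~\ref{lem:control_psit} and the moment bound of Lemma~\ref{lem:control_exp_Yi}, then upgrade to a uniform-in-time estimate via Garsia--Rademich--Rumsey and finish by Markov plus a union bound over the $n_f=O(N)$ time windows. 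Your observation that $\alpha_n$ is $\cF_{nT}$-measurable, so that $\Psi^{\alpha_n}_{t,v}$ is adapted and the bounds of Lemma~\ref{lem:control_psit} are uniform in the base phase, is exactly the reason the argument of Proposition~\ref{prop:Vn small} transfers.

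Two minor deviations from the paper's version, neither of which affects the conclusion. First, Lemma~\ref{lem:ZNt} actually produces the modulus $(t-s)^{3m/4}$ rather than your $(t-s)^{m/2}$ (the paper applies H\"older with exponents $4,\,4/3$ on the $[s',t']$-terms and takes $\beta=3/4$ in \eqref{eq:control_Psi_diff} for the $\Psi^{\alpha_n,2}$-increment), and accordingly uses $\chi(u)=u^{1/4+1/(8m)}$ in Lemma~\ref{lem:GRR}; both choices give $\mathbf{E}\bigl[\sup_t\Vert Z^{(n)}_{N,t}\Vert^{2m}_{\mathbf{H}^{-r}_\theta}\bigr]\lesssim N^{-m}$, so the distinction is immaterial. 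Second, the two $\Psi^{\alpha_n,2}$-pieces are not of smaller order than the gradient terms: their Meyer processes carry the same $1/N$ prefactor (the $\sqrt{2}/N$ prefactor squared against the $N$ independent Brownian motions), so each contributes at order $N^{-m}$ in the $2m$-th moment, the same as $Z^{(n,1)},\dots,Z^{(n,5)}$. The proof still goes through; it just does not rely on these pieces being negligible.
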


The proof of Proposition~\ref{prop:control noise} follows the same steps as the ones given in Section~\ref{sec:first bound}, with $\Psi^\ga_{t,s}$ and $Z_{n,t}$ taking the role of $e^{-(t-s)\cL_N}$ and $W_{n,t}$. For $n\in \{0,\ldots,n_f-1\}$, $0< t^{ \prime}< t\leq T$ and $g=(f, m)\in \mathbf{H}^r_\theta$, recall the definition of $Z_{ N, t^{ \prime}, t}$ in \eqref{eq:ZNttprime} and the definition of $ \Psi_{ t, s}^{ \ga}$ in \eqref{eq:Psi12}.
Hence, for $0< s^{ \prime}< s < t$, $s^{ \prime}< t^{ \prime}< t\leq T$, the difference $Z_{ N, t^{ \prime}, t}^{ (n)} - Z_{N, s^{ \prime}, s}^{ (n)}$ may be written as
\begin{equation}
Z_{N, t^{ \prime}, t}^{ (n)}(g) - Z_{N, s^{ \prime}, s}^{ (n)}(g)= \sum_{ i=1}^{ 3}Z^{(n, i)}_{N, s^{ \prime}, s, t}(g)+ \sum_{ i=4}^{ 6}Z^{ (n, i)}_{ N, s^{ \prime}, t^{ \prime}, t}(g)
\end{equation} 
where
\begin{align*}
Z^{(n, 1)}_{N, s^{ \prime}, s, t}(g)&= \frac{ \sqrt{ 2}}{ N} \sum_{ i=1}^{ N} \int_{0}^{s^{ \prime}} \left[\nabla \Psi_{t,v}^{ \ga_n, 1}(g)- \nabla \Psi_{s,v}^{\ga_n, 1}(g) \right](Y_{ i, nT+v}) \cdot \sigma {\rm d}B_{ i, nT+v},\\
Z^{(n, 2)}_{N, s^{ \prime}, s, t}(g)&= -\frac{ \sqrt{ 2}}{ N} \sum_{ j=1}^{ N} \int_{0}^{s^{ \prime}} \left\lbrace \frac{ 1}{ N}\sum_{ i=1}^{ N}\left[\nabla \Psi_{t,v}^{ \ga_n, 1}(g)- \nabla \Psi_{s,v}^{\ga_n, 1}(g) \right](Y_{ i, nT+v})\right\rbrace \cdot \sigma {\rm d}B_{ j,nT+ v},\\
Z^{(n, 3)}_{N, s^{ \prime}, s, t}(g)&= \frac{ \sqrt{ 2}}{ N} \sum_{ j=1}^{ N} \int_{0}^{s^{ \prime}} \left[\Psi_{t,v}^{\ga_n, 2}(g)- \Psi_{s,v}^{\ga_n, 2}(g) \right]\cdot \sigma  {\rm d}B_{ j, nT+v},\\
Z^{ (n, 4)}_{ N, s^{ \prime}, t^{ \prime}, t}(g)&= \frac{ \sqrt{ 2}}{ N} \sum_{ i=1}^{ N} \int_{ s^{ \prime}}^{t^{ \prime}} \nabla \Psi_{t, v}^{\ga_n, 1}(g)(Y_{ i, nT+v}) \cdot \sigma{\rm d}B_{ i,nT+ v},\\
Z^{ (n, 5)}_{ N, s^{ \prime}, t^{ \prime}, t}(g)&= -\frac{ \sqrt{ 2}}{ N} \sum_{ j=1}^{ N} \int_{ s^{ \prime}}^{t^{ \prime}} \left\lbrace\frac{ 1}{ N} \sum_{ j=1}^{ N} \nabla \Psi_{t, v}^{\ga_n, 1}(g)(Y_{ i, nT+v}) \right\rbrace \cdot \sigma{\rm d}B_{ j, nT+v},\\
Z^{ (n, 6)}_{ N, s^{ \prime}, t^{ \prime}, t}(g)&= \frac{ \sqrt{ 2}}{ N}\int_{ s^{ \prime}}^{t^{ \prime}}\Psi_{t, v}^{\ga_n, 2}(g)\cdot \sigma \sum_{ j=1}^{ N} {\rm d}B_{ j,nT+ v}.
\end{align*}
\begin{lemma}
\label{lem:MNs}
For all $n=0,\ldots,n_f$ the processes $(Z_{N, s^{ \prime}, s, t}^{ (n, i)})_{s^{ \prime}\in [0, s)}$ for $i=1,\ldots, 3$, and $ \left(Z_{N, s^{ \prime}, t^{ \prime}, t}^{ (n, i)}\right)_{t^{ \prime}\in (s^{ \prime}, t)}$ for $i=4,\ldots,6$ are martingales in $ \mathbf{ H}_{ \theta}^{ -r}$, whose trajectories are almost surely continuous. Their Meyer processes are given by
\begin{align*}
\left\langle Z_{ n, \cdot, s, t}^{ (1)}\right\rangle_{ s^{ \prime}}&= 2\sum_{i=1}^{ N} \int_{ 0}^{s^{ \prime}}  \left\Vert \frac{ 1}{ N} \sigma^{ \dagger}\cdot \left[\nabla \Psi_{t,v}^{\ga_n, 1}(\cdot)- \nabla \Psi_{s,v}^{\ga_n, 1}(\cdot) \right](Y_{ i, nT+v}) \right\Vert^{ 2}_{ \mathbf{ H}_{ \theta}^{ -r}} \dd v,\\ 
\left\langle Z_{ n, \cdot, s, t}^{ (2)}\right\rangle_{ s^{ \prime}}&= \frac{ 2}{ N} \int_{ 0}^{s^{ \prime}} \left\Vert \frac{ 1}{ N} \sum_{ i=1}^{ N} \sigma^{ \dagger}\cdot\left[\nabla \Psi_{t,v}^{\ga_n, 1}(\cdot)- \nabla \Psi_{s,v}^{\ga_n, 1}(\cdot) \right](Y_{ i, nT+v})\right\Vert^{ 2}_{  \mathbf{ H}_{ \theta}^{ -r}}  \dd v,\\
\left\langle Z_{ n, \cdot, s, t}^{ (3)}\right\rangle_{ s^{ \prime}}&= \frac{ 2}{ N}\int_{ 0}^{s^{ \prime}}  \left\Vert \sigma^{ \dagger}\cdot \left[\Psi_{t,v}^{\ga_n, 2}(\cdot)- \Psi_{s,v}^{\ga_n, 2}(\cdot) \right] \right\Vert^{ 2}_{  \mathbf{ H}_{ \theta}^{- r}} \dd v,
\end{align*}
and 
\begin{align*}
\left\langle Z_{n, s^{ \prime}, \cdot,t}^{ (4)}\right\rangle_{ t^{ \prime}}&= 2\sum_{i=1}^{ N} \int_{ s^{ \prime}}^{t^{ \prime}} \left\Vert \frac{ 1}{ N} \sigma^{ \dagger}\cdot\nabla \Psi_{t,v}^{\ga_n, 1}(\cdot)(Y_{ i,nT+ v})\right\Vert^{ 2}_{  \mathbf{ H}_{ \theta}^{- r}}  \dd v,\\
\left\langle Z_{ n, s^{ \prime}, \cdot,t}^{ (5)}\right\rangle_{ t^{ \prime}}&= \frac{ 2}{ N} \int_{ s^{ \prime}}^{t^{ \prime}} \left\Vert \frac{ 1}{ N} \sum_{ i=1}^{ N} \sigma^{ \dagger}\cdot\nabla \Psi_{t,v}^{\ga_n, 1}(\cdot)(Y_{ i,nT+ v})\right\Vert^{ 2}_{  \mathbf{ H}_{ \theta}^{ -r}}  \dd v,\\
\left\langle Z_{ n, s^{ \prime}, \cdot,t}^{ (6)}\right\rangle_{ t^{ \prime}}&=  \frac{ 2}{ N}\int_{ s^{ \prime}}^{t^{ \prime}} \left\Vert \sigma^{ \dagger}\cdot \Psi_{t,v}^{\ga_n, 2}(\cdot)\right\Vert^{ 2}_{  \mathbf{ H}_{ \theta}^{ -r}}  \dd v.
\end{align*}
\end{lemma}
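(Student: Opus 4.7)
The plan is to follow, essentially mutatis mutandis, the strategy used in the proof of Lemma~\ref{lem:VNi}, with the difference that the semigroup $e^{(t-s)\mathcal{L}_N^*}$ is replaced by the (non-autonomous) adjoint $\Psi_{t,v}^{\alpha_n}$ whose regularity is controlled via Lemma~\ref{lem:control_psit} instead of Lemma~\ref{lem:bound OU}. I will treat only $Z^{(n,1)}_{N,s',s,t}$ in detail, the five other processes being handled by identical arguments. First observe that since $\alpha_n = \mathrm{proj}(\mu_{N,(n\wedge n_\tau)T})$ is $\mathcal{F}_{nT}$-measurable, it may be frozen after conditioning on $\mathcal{F}_{nT}$, so that for each fixed test element $g\in \mathbf{H}^r_\theta$ the integrand $\nabla \Psi_{t,v}^{\alpha_n,1}(g)(Y_{i,nT+v})$ is adapted to $(\mathcal{F}_{nT+v})_{v\geq 0}$ and the scalar process $v'\mapsto Z^{(n,1)}_{N,v',s,t}(g)$ is a genuine square-integrable real martingale with the expected It\^o bracket.

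The key analytic estimate is the $v$-integrability of these brackets. Pick a complete orthonormal basis $(g_\ell)_{\ell\geq 0}$ of $\mathbf{H}^r_\theta$. For each $\ell$, pairing the distribution $\delta_{Y_{i,nT+v}}\in H_\theta^{-(r-3)}$ (controlled by $w_{-2\theta/3}(Y_{i,nT+v})$ via Lemma~\ref{lem:control_delta_x} with $\eta=1$, which applies since \eqref{hyp:r} yields $r-3>d/2$) with the gradient of the $\Psi$-difference in $H^{r-3}_\theta$ gives, by \eqref{eq: nabla u H'} and \eqref{eq:control_Psi_diff} with $\beta=2$,
\begin{equation*}
\bigl|[\nabla \Psi_{t,v}^{\alpha_n,1}(g_\ell) -\nabla \Psi_{s,v}^{\alpha_n,1}(g_\ell)](Y_{i,nT+v})\bigr| \,\leq\, C\, (t-s)\, w_{-2\theta/3}(Y_{i,nT+v})\,\Vert g_\ell\Vert_{\mathbf{H}^r_\theta}.
\end{equation*}
Squaring and summing over $\ell$ reconstructs the $H_\theta^{-r}$-norm of the integrand, and invoking Lemma~\ref{lem:control_exp_Yi} (note that \eqref{hyp:theta} and \eqref{hyp m} ensure $4\theta/3\leq \gamma$) yields a finite upper bound for
\begin{equation*}
\sum_{\ell\geq 0}\mathbf{E}\!\left[\sum_{i=1}^N \int_0^{s'}\Bigl|\tfrac1N\sigma^\dagger\cdot[\nabla \Psi_{t,v}^{\alpha_n,1}(g_\ell)-\nabla \Psi_{s,v}^{\alpha_n,1}(g_\ell)](Y_{i,nT+v})\Bigr|^2\dd v\right].
\end{equation*}
The same bookkeeping covers the other processes: for $Z^{(n,3)}$ one uses the scalar component $\Psi^{\alpha_n,2}$ of \eqref{eq:control_Psi_diff}; for $Z^{(n,4)},Z^{(n,5)},Z^{(n,6)}$ one uses the non-differenced bound \eqref{eq:control_Psi} (with $\beta=0$, which avoids any time singularity in $v$), which is sufficient since these integrals are over the short interval $(s',t')$.

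Doob's $L^2$-inequality applied coordinate-wise then gives $\sum_\ell \mathbf{E}[\sup_{s'\in[0,s)} Z^{(n,1)}_{N,s',s,t}(g_\ell)^2]<\infty$. This lets me run the $\varepsilon/3$-truncation argument of Lemma~\ref{lem:VNi}: almost surely, only finitely many coordinates contribute above $\varepsilon/6$, and scalar continuity of each $Z^{(n,1)}_{N,\cdot,s,t}(g_\ell)$ upgrades to almost sure continuity of $Z^{(n,1)}_{N,\cdot,s,t}$ in $\mathbf{H}_\theta^{-r}$. The $\mathbf{H}_\theta^{-r}$-valued Meyer process is finally obtained by summing the scalar brackets against the basis, which reproduces the displayed formulas. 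The one genuine obstacle is the Sobolev bookkeeping around the pairing $\langle\delta_{Y_{i,nT+v}},\nabla \Psi_{t,v}^{\alpha_n,1}(g)\rangle$: one must lose exactly three Sobolev derivatives to invoke Lemma~\ref{lem:control_delta_x}, then one more to remove the gradient via \eqref{eq: nabla u H'}, which is why the condition \eqref{hyp:r} is set to $r\geq 6+\max(d/2,r_0)$ and why the $\beta=2$ regularization in \eqref{eq:control_Psi} cannot be pushed further.
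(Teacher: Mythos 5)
Your proposal is correct and follows essentially the same route as the paper's own proof, which simply refers back to the $\varepsilon/3$-truncation argument of Lemma~\ref{lem:VNi} and records the key pointwise bound on $[\nabla\Psi^{\ga_n,1}_{t,v}(g)-\nabla\Psi^{\ga_n,1}_{s,v}(g)](Y_{i,nT+v})$ via Lemma~\ref{lem:control_delta_x} and Lemma~\ref{lem:control_psit}. In fact your Sobolev bookkeeping at levels $r-3$ and $r-2$ (using \eqref{eq:control_Psi_diff} with $\beta=2$ and Lemma~\ref{lem:control_delta_x} at index $r-3$) is exactly the chain used in the subsequent proof of Lemma~\ref{lem:ZNt}, and is cleaner than the $r+1$, $r$ indices displayed in the paper's proof of this lemma, which as written are inconsistent with the direction in which $\nabla$ acts on positive-order Sobolev norms. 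One small slip: the inequality you need to trade the gradient for one Sobolev derivative is the positive-order version $\|\nabla f\|_{H^{r-3}_\theta}\leq C\|f\|_{H^{r-2}_\theta}$ following from the norm equivalence \eqref{eq:equiv_normHr}, not \eqref{eq: nabla u H'}, which is the dual statement for negative-order spaces; this is harmless but worth keeping straight.
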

\begin{proof}[Proof of Lemma~\ref{lem:MNs}]
The proof of this lemma is similar to the proof of Lemma~\ref{lem:VNi}, remarking for example that for $(g_{ l})_{ l\geq0}$ a complete orthonormal basis in $ \mathbf{ H}_{ \theta}^{ r}$ we have
\begin{align}
\sum_{l\geq 0}\mathbf{E}\left[\left\langle Z_{ 0, \cdot,s,t}^{ (1)}(g_{ l})\right\rangle_{ s^{ \prime}}\right]&=2\sum_{l\geq 0}\sum_{i=1}^{ N} \int_{ 0}^{s^{ \prime}}  \left\vert \frac{ 1}{ N} \sigma^{ \dagger}\cdot \left[\nabla \Psi_{t,v}^{\ga_0, 1}(g_{ l})- \nabla \Psi_{s,v}^{\ga_0, 1}(g_{ l}) \right](Y_{ i, v}) \right\vert^{ 2} {\rm d}v\nonumber\\
&=2\sum_{i=1}^{ N} \int_{ 0}^{s^{ \prime}}  \mathbf{ E} \left[\left\Vert \frac{ 1}{ N} \sigma^{ \dagger}\cdot \left[\nabla \Psi_{t,v}^{\ga_0, 1}(\cdot)- \nabla \Psi_{s,v}^{\ga_0, 1}(\cdot) \right](Y_{ i, v}) \right\Vert^{ 2}_{ \mathbf{ H}_{ \theta}^{ -r}}\right] {\rm d}v \nonumber\\
&\leq \frac{C}{N}\int_{ 0}^{s^{ \prime}}  \frac{ 1}{ N} \sum_{ i=1}^{ N}  \mathbf{ E} \left[w_{ -\frac{4 \theta}{ 3} }(Y_{ i, v})\right]\dd v <\infty,
\end{align}
where we have used in particular the inequality
\begin{equation}
\left\Vert \frac{ 1}{ N} \sigma^{ \dagger}\cdot \left[\nabla \Psi_{t,v}^{\ga_0, 1}(\cdot)- \nabla \Psi_{s,v}^{\ga_0, 1}(\cdot) \right](Y_{ i, v}) \right\Vert^{ 2}_{ \mathbf{ H}_{ \theta}^{ -r}}\leq \frac{C'}{N^2}w_{ -\frac{4 \theta}{ 3}} \left(Y_{ i, v}\right).
\end{equation}
Indeed, for any $g\in \mathbf{H^r_\theta}$, using Lemma~\ref{lem:control_delta_x} with $ \eta=1$ and Lemma~\ref{lem:control_psit}:
\begin{align}
\left|\sigma^{ \dagger}\cdot \left[\nabla \Psi_{t,v}^{\ga_0, 1}(g)- \nabla \Psi_{s,v}^{\ga_0, 1}(g) \right](Y_{ i, v})\right|
&\leq C_1\left\Vert \gd_{Y_{i,v}}\right\Vert_{ \mathbf{ H}_{ \theta}^{ -(r+1)}}\left\Vert \nabla \Psi_{t,v}^{\ga_0, 1}(g)- \nabla \Psi_{s,v}^{\ga_0, 1}(g)  \right\Vert_{ \mathbf{ H}_{ \theta}^{ r+1}}\nonumber\\
&\leq C_2\left\Vert \gd_{Y_{i,v}}\right\Vert_{ \mathbf{ H}_{ \theta}^{ -(r+1)}}\left\Vert  \Psi_{t,v}^{\ga_0, 1}(g)- \Psi_{s,v}^{\ga_0, 1}(g)  \right\Vert_{ \mathbf{ H}_{ \theta}^{ r}}\\
&\leq C ' w_{ -\frac{2 \theta}{3}} \left(Y_{ i, v}\right) \Vert g\Vert_{ \mathbf{ H}_{ \theta}^{ r}}.\nonumber
\end{align}
Here we have considered $\gd_y$ as an element of $\mathbf{H}^{-r}_\theta$, with
\begin{equation}
\llangle \gd_y; (\varphi,\psi)\rrangle = \varphi(y).
\end{equation}
The rest of the proof follows from arguments identical to the ones given for Lemma~\ref{lem:VNi}.
\end{proof}

\begin{lemma}
\label{lem:ZNt}
Let $(r, \gamma, \theta)$ be fixed as in \eqref{hyp:r} and \eqref{hyp:theta} and recall $ \delta_{ 1}>0$ given by Proposition~\ref{prop: bound mu1t}. Then, there exists a constant $\kappa_4=\kappa_4(T, \delta_{ 1})>0$ such that for all $\gd\in (0,\gd_1)$, for any $0\leq  s < t\leq T$ and any $n=0,\ldots,n_f-1$, one has
\begin{equation}
 \mathbf{ E} \left[\left\Vert Z_{ N, t}^{ (n)} - Z_{ N,  s}^{ (n)}\right\Vert_{\mathbf{ H}_{ \theta}^{-r}}^{ 2m}\right]\leq \frac{ \kappa_4}{ N^{ m}} (t-s)^{ \frac{ 3m}{ 4}}.
\end{equation} 
\end{lemma}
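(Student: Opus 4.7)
The proof follows line-by-line the strategy used for Lemma~\ref{lem:bound Vnt-Vns}, with the backward evolution $\Psi^{\ga_n}_{t,s}$ of Lemma~\ref{lem:control_psit} playing the role of $e^{t\cL_N^*}$. Starting from the decomposition of the martingale increment stated just before Lemma~\ref{lem:MNs},
\[
Z_{N,t',t}^{(n)} - Z_{N,s',s}^{(n)} = \sum_{i=1}^{3} Z^{(n,i)}_{N,s',s,t} + \sum_{i=4}^{6} Z^{(n,i)}_{N,s',t',t},
\]
I will estimate $\mathbf{E}\bigl[\|Z^{(n,i)}\|_{\mathbf{H}^{-r}_\theta}^{2m}\bigr]$ separately for each $i$ via the Burkholder--Davis--Gundy inequality in Hilbert space (\cite{Marinelli2016}, Theorem~1.1) applied to the Meyer processes computed in Lemma~\ref{lem:MNs}, and then pass to the limits $t' \nearrow t$, $s' \nearrow s$ by Fatou's lemma, exactly as at the end of the proof of Lemma~\ref{lem:bound Vnt-Vns}.

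\textbf{Bounds on the six pieces.} For $i=1,2$ (resp.~$i=3$) the integrand of the Meyer process involves $\nabla\bigl(\Psi_{t,v}^{\ga_n,1}-\Psi_{s,v}^{\ga_n,1}\bigr)$ (resp.~$\Psi_{t,v}^{\ga_n,2}-\Psi_{s,v}^{\ga_n,2}$). I invoke \eqref{eq:control_Psi_diff} with $\beta = 3/2$ to get
\[
\bigl\|(\Psi^{\ga_n}_{t,v}- \Psi^{\ga_n}_{s,v})g\bigr\|_{\mathbf{H}^{r-3/2}_\theta} \leq C_\Psi (t-s)^{3/4}\|g\|_{\mathbf{H}^r_\theta},
\]
and combine it with Lemma~\ref{lem:control_delta_x} (with $\eta=1$; the estimate $\|\delta_{Y_{i,v}}\|_{H^{-(r-5/2)}_\theta} \leq C w_{-2\theta/3}(Y_{i,v})$ is admissible since $r - 5/2 > d/2$ by \eqref{hyp:r}) to bound the integrand of the Meyer process, in operator-norm form, by $C(t-s)^{3/2}\, w_{-4\theta/3}(Y_{i,v})$ for $i=1,2$, and by the $Y$-independent constant $C(t-s)^{3/2}$ for $i=3$. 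For $i=4,5,6$ I use \eqref{eq:control_Psi} with $\beta=0$ to control $\Psi_{t,v}^{\ga_n}$ uniformly in $v\in[s',t']$ in $\mathbf{H}^r_\theta$, combine it when needed with Lemma~\ref{lem:control_delta_x}, and so bound the integrands by $Cw_{-4\theta/3}(Y_{i,v})$ or by a constant. Raising each Meyer process to the $m$-th power and applying Jensen's inequality on the time integral, the expectation reduces in every case to $\mathbf{E}\bigl[\int_0^T \frac{1}{N}\sum_{i=1}^N w_{-4m\theta/3}(Y_{i,v})\,dv\bigr]$, which is finite and uniform in $N,t$ by Lemma~\ref{lem:control_exp_Yi}, thanks to $4m\theta/3 \leq \gamma$ ensured by \eqref{hyp:theta}. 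The resulting contributions are of order $N^{-m}(t-s)^{3m/2}$ for $i=1,2,3$ and $N^{-m}(t'-s')^{m}$ for $i=4,5,6$; both are majorised by $\kappa_4 N^{-m}(t-s)^{3m/4}$ after using $(t-s)\leq T$ and passing to the limits $t' \nearrow t$, $s' \nearrow s$.

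\textbf{Main obstacle.} The only genuine difficulty lies in bookkeeping the regularity indices: one must choose $\beta\in[0,2]$ in Lemma~\ref{lem:control_psit} large enough to produce a sufficient H\"older rate in $(t-s)$, yet small enough for the Sobolev embedding threshold $r-\beta-1 > d/2$ in Lemma~\ref{lem:control_delta_x} to remain valid. The exponent $3m/4$ in the statement is a convenient compromise: the difference pieces $i=1,2,3$ can produce arbitrarily good powers of $(t-s)$ (up to $(t-s)^{2m}$ with $\beta=2$), while the pieces $i=4,5,6$ are intrinsically limited to $(t-s)^m$ because the noise is integrated over a ``time gap'' of length $(t'-s')$, not a shift of length $(t-s)$; the exponent $3m/4$ is less than both rates and is precisely the one needed for the subsequent Garsia--Rademich--Rumsey argument analogous to Proposition~\ref{prop:Vn small}. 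The simultaneous calibration of $\beta$, the Sobolev threshold, and the exponent $4m\theta/3 \leq \gamma$ required by Lemma~\ref{lem:control_exp_Yi} is exactly what is enforced by the hypotheses \eqref{hyp gamma xi}, \eqref{hyp m}, \eqref{hyp:theta} and \eqref{hyp:r}.
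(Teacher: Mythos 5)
Your proposal is correct and follows the same overall architecture as the paper's proof: decompose $Z^{(n)}_{N,t',t} - Z^{(n)}_{N,s',s}$ into the six martingale increments introduced before Lemma~\ref{lem:MNs}, apply Burkholder--Davis--Gundy in $\mathbf{H}^{-r}_\theta$ to the Meyer processes, bound each integrand via Lemma~\ref{lem:control_delta_x} and Lemma~\ref{lem:control_psit}, close with the uniform moment bound of Lemma~\ref{lem:control_exp_Yi} under $4m\theta/3 \leq \gamma$, and pass to the limits $s'\nearrow s$, $t'\nearrow t$ by Fatou.

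The only divergence from the paper lies in the choice of the regularity parameter $\beta$, and it is worth recording because it yields a small simplification. For the pieces $i=1,2$ you use \eqref{eq:control_Psi_diff} with $\beta=3/2$ (rate $(t-s)^{3m/2}$), whereas the paper takes $\beta=2$ (rate $(t-s)^{2m}$); both are more than sufficient and rely on the same Sobolev threshold check against \eqref{hyp:r}. The genuinely interesting difference is the piece $i=3$: the paper invokes the third estimate of Lemma~\ref{lem:control_psit} with $\beta=3/4$, which produces the singular factor $(s-v)^{-3/8}$ and, after integration, the exponent $(t-s)^{3m/4}$ appearing in the lemma's statement. You observe instead that the $\mathbb{R}^d$-valued component $\Psi^{u,2}$ is already controlled by \eqref{eq:control_Psi_diff}, because its Euclidean norm is dominated by $\|\cdot\|_{\mathbf{H}^{r-\beta}_\theta}$ for \emph{any} $\beta \leq 2$, so no singular factor is needed and the rate becomes $(t-s)^{3m/2}$. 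Similarly, for $i=4,5,6$ you apply Jensen directly and get $(t'-s')^m$, whereas the paper applies H\"older with $p=4$ to force the common exponent $3m/4$. After the limits, your estimates give $\kappa_4 N^{-m}(t-s)^{m}$, which is sharper than and of course implies the stated $\kappa_4 N^{-m}(t-s)^{3m/4}$. Both proofs are correct; your handling of $i=3$ is a mild but genuine streamlining of the paper's argument.
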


\begin{proof}[Proof of Lemma~\ref{lem:ZNt}]
The proof is similar as the one of Lemma~\ref{lem:bound Vnt-Vns} and boils down to estimating (for $n=0$) $ \mathbf{ E} \left[\left\Vert Z_{ N}^{ (0, i)} \right\Vert_{ \mathbf{ H}_{ \theta}^{-r}}^{ 2m}\right] $ for all $i=1,\ldots, 6$. For the term $Z^{(0, 1)}_N$ we get
\begin{align}
\mathbf{ E} \left[ \left\Vert Z_{N, s^{ \prime}, s, t}^{ (0, 1)} \right\Vert^{ 2m}_{ \mathbf{ H}_{ \theta}^{ -r}}\right]\leq \frac{C_1}{ N^{m}}  \mathbf{ E} \left[ \left( \frac{ 1}{ N}\sum_{i=1}^{ N} \int_{ 0}^{s^{ \prime}} \left\Vert \left[\nabla \Psi_{t,v}^{\ga_0, 1}(\cdot)- \nabla \Psi_{s,v}^{\ga_0, 1}(\cdot)  \right](Y_{ i, v}) \right\Vert_{\mathbf{ H}^{-r}_{ \theta}}^{ 2} {\rm d}v\right)^{ m}\right]. \label{eq:bound_norm_MN1}
\end{align}
But applying Lemma~\ref{lem:control_delta_x} with $ \eta=1$ and Lemma~\ref{lem:control_psit} we obtain, almost surely for all $g\in \mathbf{ H}^{ r}_{ \theta}$,
\begin{align}
\left| \left[\nabla \Psi_{t,v}^{\ga_0, 1}(g)- \nabla \Psi_{s,v}^{\ga_0, 1}(g) \right](Y_{ i, v})\right|
&\leq C_{1,r-3}\left\Vert \gd_{Y_{i,v}}\right\Vert_{ \mathbf{ H}_{ \theta}^{ -\left(r-3\right)}}\left\Vert \nabla \Psi_{t,v}^{\ga_0, 1}(g)- \nabla \Psi_{s,v}^{\ga_0, 1}(g)  \right\Vert_{ \mathbf{ H}_{ \theta}^{ r-3}}\nonumber\\
&\leq C_2\left\Vert \gd_{Y_{i,v}}\right\Vert_{ \mathbf{ H}_{ \theta}^{ -\left(r-3\right)}}\left\Vert  \Psi_{t,v}^{\ga_0, 1}(g)- \Psi_{s,v}^{\ga_0, 1}(g)  \right\Vert_{ \mathbf{ H}_{ \theta}^{ r-2}}\\
&\leq C_3 w_{ -\frac{ 2 \theta}{3}} \left(Y_{ i, v}\right)(t-s) \Vert g\Vert_{ \mathbf{ H}_{ \theta}^{ r}}.\nonumber
\end{align}
Hence,
\begin{equation}
\label{eq:gradient_diff_Psi}
\left\Vert \left[\nabla \Psi_{t,v}^{\ga_0, 1}(\cdot)- \nabla \Psi_{s,v}^{\ga_0, 1}(\cdot)  \right](Y_{ i, v}) \right\Vert_{\mathbf{ H}^{-r}_{ \theta} }\leq C_3 w_{ -\frac{2 \theta}{3}} \left(Y_{ i, v}\right) (t-s),
\end{equation}
and going back to \eqref{eq:bound_norm_MN1} and relying on Lemma~\ref{lem:control_exp_Yi}, we obtain
\begin{align}
\mathbf{ E} \left[ \left\Vert Z_{ N, s^{ \prime}, s, t}^{ (0, 1)} \right\Vert^{ 2m}_{  \mathbf{ H}_{ \theta}^{ -r}}\right]&\leq \frac{C_4(t-s)^{2m}}{ N^{m}}  \mathbf{ E} \left[ \left(\int_{ 0}^{s^{ \prime}}  \frac{ 1}{ N}\sum_{i=1}^{ N} w_{ -\frac{4 \theta}{3}} \left(Y_{ i, v}\right) {\rm d}v\right)^{ m}\right]\\
&\leq \frac{C_4(t-s)^{2m}T^{m-1}}{ N^{m}}\mathbf{ E} \left[ \int_{ 0}^{s^{ \prime}}  \frac{ 1}{ N}\sum_{i=1}^{ N} w_{ -\frac{4 m \theta}{3}} \left(Y_{ i, v}\right) {\rm d}v\right]\leq \frac{C_5}{ N^{m}}(t-s)^{2m}.
\end{align}
The same analysis gives the same bound for $\mathbf{ E} \left[ \left\Vert Z_{ N, s^{ \prime}, s, t}^{ (0, 2)} \right\Vert^{ 2m}_{ \mathbf{ H}_{ \theta}^{ r}}\right]$.
Concerning $Z_{ N}^{ (0, 3)}$, applying Lemma~\ref{lem:control_psit} with $\beta= \frac{ 3}{ 4}$, we get
\begin{equation}
\left\Vert \Psi_{t,v}^{\ga_0, 2}(\cdot)- \Psi_{s,v}^{\ga_0, 2}(\cdot) \right\Vert_{  \mathbf{ H}_{ \theta}^{ -r}} \leq C_{\Psi} (t-s)^{ \frac{ 3}{ 8}}(s-v)^{- \frac{ 3}{ 8}},
\end{equation}
which leads to
\begin{align}
\mathbf{ E} \left[ \left\Vert Z_{ N, s^{ \prime}, s, t}^{ (0, 3)} \right\Vert^{ 2m}_{  \mathbf{ H}_{ \theta}^{ -r}}\right]&\leq \frac{ C_6(t-s)^{ \frac{ 3m}{ 4}}}{ N^{ m}}\left(\int_{ 0}^{s^{ \prime}} (s-v)^{- \frac{ 3}{ 4}} {\rm d}v\right)^{ m} \leq \frac{C_7}{N^m}(t-s)^{ \frac{ 3m}{ 4}}. \label{eq:bound_def_M3}
 \end{align}
For $Z^{(0, 4)}_N$, using again of Lemma~\ref{lem:control_delta_x} and Lemma~\ref{lem:control_psit}, we obtain
\begin{align}
\left|\nabla \Psi^{\ga_0,1}_{t,v}(g)(Y_{i,v}) \right|
&\leq C_{1,r-1}\left\Vert \gd_{Y_{i,v}}\right\Vert_{ \mathbf{ H}_{ \theta}^{ -\left(r-1\right)}}\left\Vert \nabla \Psi_{t,v}^{\ga_0, 1}(g)  \right\Vert_{ \mathbf{ H}_{ \theta}^{ r-1}}\nonumber\\
&\leq C_8\left\Vert \gd_{Y_{i,v}}\right\Vert_{ \mathbf{ H}_{ \theta}^{ -\left(r-1\right)}}\left\Vert \Psi_{t,v}^{\ga_0, 1}(g)  \right\Vert_{ \mathbf{ H}_{ \theta}^{ r}}\leq C_9 w_{ -\frac{ 2 \theta}{3}} \left(Y_{ i, v}\right) \Vert g\Vert_{ \mathbf{ H}_{ \theta}^{ r}},
\end{align}
so that
\begin{equation}
\left\Vert\nabla \Psi_{t,v}^{\ga_0, 1}(\cdot)(Y_{ i, v}) \right\Vert_{\mathbf{ H}^{-r}_{ \theta} }\leq C_9 w_{ -\frac{2 \theta}{3}} \left(Y_{ i, v}\right) ,
\end{equation}
and we deduce, by H\"older inequality with exponents $p= 4$ and $q= \frac{ 4}{ 3}$ (recall that $ \frac{ m}{ 4}>1$)
\begin{align}
\mathbf{ E} \left[ \left\Vert Z_{N, s^{ \prime}, s, t}^{ (0, 4)} \right\Vert^{ 2m}_{  H_{ \theta}^{ -r}}\right]&\leq \frac{C_{10}}{ N^{m}}  \mathbf{ E} \left[ \left(\int_{ s'}^{t^{ \prime}}  \frac{ 1}{ N}\sum_{i=1}^{ N} w_{ -\frac{4 \theta}{3}} \left(Y_{ i, v}\right) {\rm d}v\right)^{ m}\right]\nonumber\\
&\leq \frac{C_{11}T^{ \frac{ m}{ 4}-1}}{ N^{m}}(t'-s')^{ \frac{ 3m}{ 4}}\mathbf{ E} \left[ \int_{ s'}^{t^{ \prime}}  \frac{ 1}{ N}\sum_{i=1}^{ N} w_{ -\frac{4 m \theta}{3}} \left(Y_{ i, v}\right) {\rm d}v\right]\nonumber\\
&\leq \frac{C_{12}}{ N^{m}}(t'-s')^{ \frac{ 3m}{ 4}}.
\end{align}
The term $Z^{(0, 5)}_N$ can be treated similarly. Concerning $Z^{(0, 6)}_N$ since 
$
\left\Vert \Psi_{t,v}^{\ga_n 2}(\cdot)\right\Vert_{  \mathbf{ H}_{ \theta}^{ -r}} \leq C_\Psi 
$, we get directly
\begin{equation}
\mathbf{ E} \left[ \left\Vert Z_{ N, s^{ \prime}, s, t}^{ (0, 6)} \right\Vert^{ 2m}_{  \mathbf{ H}_{ \theta}^{ -r}}\right]\leq \frac{C_{13}}{N^m}(t'-s')^m.
\end{equation}
All the estimates above lead to
\begin{equation}
\mathbf{ E} \left[\left\Vert Z_{ n,t', t} - Z_{ n,s',  s}\right\Vert_{\mathbf{ H}_{ \theta}^{-r}}^{ 2m}\right]\leq\frac{C_{14}}{N^m}\left( (t-s)^{ \frac{ 3m}{ 4}}+(t'-s')^{ \frac{ 3m}{ 4}}\right),
\end{equation}
which implies the result, setting $\kappa_4=2C_{14}$.
\end{proof}

\begin{proof}[Proof of Proposition~\ref{prop:control noise}]
This proof is the same as the one of Proposition~\ref{prop: bound mu1t}, applying Lemma~\ref{lem:GRR} with $\phi(t)=Z_{N,t}^{ (n)}$, $\chi(u) = u^{\frac14+\frac{1}{8m}}$ and $\psi(u)=u^{2m}$.
\end{proof}

\subsection{Proximity to $\cM_\gd$.}
Before proving Proposition~\ref{prop:closeness to M} we first remark that the error term $R_{\ga,s}$ defined in \eqref{eq:Ralphas} is either quadratic or of order $\frac1N$, as stated in the following lemma.
\begin{lemma}
\label{lem:Rnu}
There exists a constant $C_R$ such that for all $s,\ga\in \bbR$ and $\nu=(\eta, h)\in \mathbf{H}^{-(r-1)}_\theta$,
\begin{equation}
\label{eq:control_R}
\left\Vert R_{\ga,s}(\nu)\right\Vert_{\mathbf{H}^{-(r+1)}_\theta}\leq C_R\left(\frac{1}{N}\left(1+\left\Vert\eta \right\Vert_{H^{-(r-1)}_\theta}\right) +\gd\left\Vert\nu \right\Vert_{\mathbf{H}^{-r}_\theta}^2\right).
\end{equation}
\end{lemma}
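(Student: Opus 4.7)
The plan is to treat the six contributions to $R^1_{\ga,s}$ and the two contributions to $R^2_{\ga,s}$ separately, grouping them into the genuinely $O(1/N)$ Laplacian term on the one hand, and the remaining terms (all at least quadratic in $\nu=(\eta,h)$) on the other.

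First I would handle the $1/N$ piece $-\frac{1}{N}\nabla\cdot(\gs^2\nabla(q_{\ga+s}+\eta))$. Iterating the gradient bound \eqref{eq: nabla u H'} gives $\|\nabla\cdot(\gs^2\nabla u)\|_{H^{-(r+1)}_\theta}\leq C\|u\|_{H^{-(r-1)}_\theta}$. Applied to $u=\eta$ this produces the factor $\frac{C}{N}\|\eta\|_{H^{-(r-1)}_\theta}$, and applied to $u=q_{\ga+s}$ we use Theorem~\ref{th:Gamma}, which provides a uniform bound $\|q_{\ga+s}\|_{H^{-(r-1)}_\theta}\leq \|q_{\ga+s}\|_{\mathbf{H}^{-r+2}_\theta}\leq C$, to produce the $\frac{C}{N}$ term.

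For the remaining (quadratic) terms I would rely on two elementary facts. \emph{(i)} For any $f\in W^{r+1,\infty}(\bbR^d)$, multiplication by $f$ is continuous on $H^{r+1}_\theta$ with norm $\leq C\|f\|_{W^{r+1,\infty}}$ (Leibniz on the equivalent norm \eqref{eq:equiv_normHr}, the weight $w_\theta$ being unaffected by multiplication by a bounded function), and by duality the same holds on $H^{-(r+1)}_\theta$. \emph{(ii)} A direct Gaussian change of variables shows that $\sup_{u\in\bbR^d}\|F_u\|_{H^r_\theta}<\infty$, since $F$ and all its derivatives are bounded and $w_\theta$ is integrable. Using \emph{(i)}--\emph{(ii)} together with the pairing bound $|\langle\eta,F_{\gga_{\ga+s}+h}\rangle|\leq \|\eta\|_{H^{-r}_\theta}\|F_{\gga_{\ga+s}+h}\|_{H^r_\theta}\leq C\|\eta\|_{H^{-r}_\theta}$, the third term of \eqref{eq:R1} is controlled in $H^{-(r+1)}_\theta$ by $C\gd\|\eta\|_{H^{-r}_\theta}^2$ after one application of \eqref{eq: nabla u H'}. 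For the terms involving $F_{\gga_{\ga+s}+h}-F_{\gga_{\ga+s}}$ or $F_{\gga_{\ga+s}+h}-F_{\gga_{\ga+s}}-DF_{\gga_{\ga+s}}[h]$, Taylor's formula combined with the boundedness of all derivatives of $F$ yields the uniform estimates
\begin{equation*}
\|F_{\gga+h}-F_{\gga}\|_{W^{k,\infty}}\leq C|h|,\qquad \|F_{\gga+h}-F_{\gga}-DF_{\gga}[h]\|_{W^{k,\infty}}\leq C|h|^2,
\end{equation*}
for every $k\leq r+1$ and uniformly in $\gga$. Combined with \emph{(i)}, the uniform $H^{-r}_\theta$ bound on $q_{\ga+s}$, and \eqref{eq: nabla u H'}, each remaining term of $R^1_{\ga,s}$ is bounded in $H^{-(r+1)}_\theta$ by some constant times $\gd|h|\|\eta\|_{H^{-r}_\theta}$, $\gd|h|^2$, or $\gd\|\eta\|_{H^{-r}_\theta}^2$; Young's inequality $|h|\|\eta\|_{H^{-r}_\theta}\leq \frac12\|\nu\|_{\mathbf{H}^{-r}_\theta}^2$ puts all of them under $C\gd\|\nu\|_{\mathbf{H}^{-r}_\theta}^2$.

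The two terms in $R^2_{\ga,s}$ are handled in the same way and are even easier (no loss of a derivative), giving a bound in $\bbR^d$ by $C\gd\|\nu\|_{\mathbf{H}^{-r}_\theta}^2$. Summing all pieces yields \eqref{eq:control_R} with $C_R$ depending only on $K,\gs$ and the $W^{r+1,\infty}$-norms of $F, DF, D^2F$. The only mildly delicate point is to check compatibility of all multiplication/pairing arguments with the Gaussian weight $w_\theta$, which is settled once and for all by step \emph{(ii)}; the rest is purely algebraic bookkeeping.
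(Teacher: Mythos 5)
Your proposal is correct and follows essentially the same route as the paper: the $1/N$ Laplacian piece is isolated and handled by two applications of \eqref{eq: nabla u H'} together with the uniform $\mathbf{H}^{-r+2}_\theta$-bound on $q_{\ga+s}$, while the remaining terms are bounded by duality pairings against $H^{r+1}_\theta$ test functions, Taylor expansions $F_{\gga+h}-F_\gga=\int_0^1 DF_\gga(\cdot+uh)[h]\,\mathrm{d}u$ and its second-order analogue, and the boundedness of $F$ and all its derivatives, yielding the quadratic factor $\gd\|\nu\|^2_{\mathbf{H}^{-r}_\theta}$. Your only stylistic departure is to abstract the multiplication-operator continuity as a preliminary step rather than writing out each pairing inequality inline, but the underlying estimates and decomposition are identical to those in the paper.
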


\begin{proof}[Proof of Lemma~\ref{lem:Rnu}]
Let $ \eta\in H_{ \theta}^{ -(r-1)}$, $h\in \mathbb{ R}^{ d}$ and $ \nu:=(\eta, h)\in \mathbf{ H}_{ \theta}^{ -(r-1)}$. Then, using \eqref{eq: nabla u H'}
\begin{align}
\left\Vert \nabla\sigma^{2}\nabla( q_{\ga+s}+\eta) \right\Vert_{ H_{ \theta}^{ -(r+1)}} \leq C \left(\left\Vert q_{ \alpha+s} \right\Vert_{ H_{ \theta}^{ -(r-1)}} + \left\Vert  \eta \right\Vert_{ H_{ \theta}^{ -(r-1)}}\right)\leq C^{ \prime}\left(1 + \left\Vert  \eta \right\Vert_{ H_{ \theta}^{ -(r-1)}}\right),
\end{align} 
since $ s \mapsto q_{ \alpha+s}$ is periodic. Similarly, for every $ \varphi\in H_{ \theta}^{ r+1}$, 
\begin{align*}
\left\vert \left\langle \nabla\cdot \eta\left(F_{\gamma_{\ga+s}+h}-F_{\gamma_{\ga+s}}\right)\, ,\, \varphi\right\rangle \right\vert&=\left\vert \left\langle \eta\, ,\, \left(F_{\gamma_{\ga+s}+h}-F_{\gamma_{\ga+s}}\right)\cdot \nabla\varphi\right\rangle \right\vert,\\
&\leq C\left\Vert \eta \right\Vert_{ H_{ \theta}^{ -r}} \left\Vert \left(F_{\gamma_{\ga+s}+h}-F_{\gamma_{\ga+s}}\right)\cdot \nabla\varphi \right\Vert_{ H_{ \theta}^{ r}}.
\end{align*}
Writing $F_{ \gamma_{ \alpha+s}+h}(x) - F_{ \gamma_{ \alpha +s}}(x)= \int_{ 0}^{1} DF_{ \gamma_{ \alpha+s}}(x+u h)[h] {\rm d}u$ and using that 
 by assumption, $F$ is bounded as well as all its derivatives, we obtain
 \begin{align*}
\left\vert \left\langle \nabla\cdot \eta\left(F_{\gamma_{\ga+s}+h}-F_{\gamma_{\ga+s}}\right)\, ,\, \varphi\right\rangle \right\vert&\leq C\left\Vert \eta \right\Vert_{ H_{ \theta}^{ -r}} \left\vert h \right\vert \left\Vert \nabla\varphi \right\Vert_{ H_{ \theta}^{ r}}\leq C\left\Vert \nu \right\Vert_{ \mathbf{ H}_{ \theta}^{ -r}}^{ 2} \left\Vert \varphi \right\Vert_{ H_{ \theta}^{ r+1}}
 \end{align*}
 Hence
\begin{equation}
\left\Vert \delta\nabla\cdot\left(\eta\left(F_{\gamma_{\ga+s}+h}-F_{\gamma_{\ga+s}}\right)\right) \right\Vert_{ H_{ \theta}^{ -(r+1)}} \leq \delta C \left\Vert \nu \right\Vert_{ \mathbf{ H}_{ \theta}^{ -r}}^{ 2}.
\end{equation}
The next two other terms in the definition \eqref{eq:R1} of $R^{ 1}$ can be treated in a same way and lead to the same upper bound.
Moreover, by regularity of $F$, for all $x\in \mathbb{ R}^{ d}$, $F_{ \gamma_{ \alpha +s}+h}(x)- F_{ \gamma_{ \alpha+s}}(x) - DF_{ \gamma_{ \alpha+s}}(x)[h] = \frac{ 1}{ 2}\int_{ 0}^{1} D^{ 2}F_{ \gamma_{ \alpha+s}}(x+uh)[h, h]{\rm d}u:= U(x)[h,h]$. Then, si $D^{ 2}F$ is uniformly bounded and all its derivatives,
\begin{align}
\left\Vert \delta\nabla\cdot\left(q_{\ga+s}\left(F_{\gamma_{\ga+s}+h}- F_{\gamma_{\ga+s}} -DF_{\gamma_{\ga+s}}[h]\right) \right) \right\Vert_{ H_{ \theta}^{ -(r+1)}}&= \delta\left\Vert \nabla \cdot \left(q_{ \alpha +s} U(\cdot)[h, h]\right) \right\Vert_{ H_{ \theta}^{ -(r+1)}}\nonumber\\
&\leq \delta C \left\vert h \right\vert^{ 2},
\end{align}
for some constant $C>0$. The treatment of the last terms in the definitions \eqref{eq:R1} and \eqref{eq:R2} of $R^{ 1}$ and $R^{ 2}$ is the same and leads to a contribution of order $ \delta C\left\Vert \nu \right\Vert_{ \mathbf{ H}_{ \theta}^{ -r}}^{ 2}$.
\end{proof}

We have now all the ingredients to prove Proposition~\ref{prop:closeness to M}.

\begin{proof}[Proof of Proposition~\ref{prop:closeness to M}]
Let us define the following event, with probability converging to $1$ as $N\to\infty$, by Propositions~\ref{prop: bound mu1t} and \ref{prop:control noise},
\begin{equation}
\cC_{\gep,N}=\left\{\left\Vert \mu_{N,0}-\Gamma_{u_0}\right\Vert_{\mathbf{H}^{-r}_\theta}\leq \gep\right\}\cap \cB_N\cap \left\lbrace \sup_{0\leq t\leq Nt_f} \left\Vert p_{N,t}- g_{ N}\right\Vert_{H_\theta^{-(r-1)}}\leq \kappa_1\right\rbrace.
\end{equation}

\textbf{Step 1: approaching the manifold.}
For $\gep>0$ we consider $\gep_0$ small enough such that $\frac{\gep_0}{2C_\Phi(1+C_\Gamma )}<\gep$ (we will fix the value of $ \varepsilon_{ 0}>0$ later). Define for all $n\geq 1$ $ h_{ n}:= \frac{ \varepsilon_{ 0}}{ 2^{ n-1}}$ and set 
\begin{equation}
n_0:= \inf \left\lbrace n,\ h_{ n}< N^{ - \frac{ 1}{ 2}+ \frac{ 2 \xi}{ 3}}\right\rbrace.
\end{equation}
Note that $ n_0$ is of order $ O(\log(N))$. We suppose that $ \cC_{\gep',N}$ is satisfied for $\gep'>0$ small enough, so that $ \left\Vert  \nu_{ N, 0}^{ (1)} \right\Vert_{ \mathbf{ H}_{  \theta}^{ -r}}\leq h_{ 1} $  and then proceed by induction to show that we have $ \left\Vert  \nu_{ N, 0}^{ (n)} \right\Vert_{ \mathbf{ H}_{  \theta}^{ -r}}\leq h_{ n} $ and that we have the bound $\sup_{t\in [0,T]}\left\Vert  \nu_{ N, t}^{ (n)} \right\Vert_{ \mathbf{ H}_{ \theta}^{ -r}}\leq  2 C_{ \Phi} h_{ n}$. Remark that this implies in particular, since $\Theta\left(\mu^N_{nT}\right)=\mathrm{proj}\left(\Gamma_{\Theta\left(\mu^N_{nT}\right)}\right)$, that
\begin{equation}
\left\Vert \mu_{N,nT+t}-\Gamma_{\Theta\left(\mu^N_{nT}\right)+t}\right\Vert_{\mathbf{H}^{-r}_\theta}\leq \left\Vert \nu^n_{N,t}\right\Vert_{\mathbf{H}^{-r}_\theta}+\left\Vert \Gamma_{\ga_n+t}-\Gamma_{\Theta\left(\mu^N_{nT}\right)+t}\right\Vert_{\mathbf{H}^{-r}_\theta},
\end{equation}
which means, since $\ga_n-\Theta\left(\mu^N_{nT}\right)=\Theta\left(\Gamma_{\ga_n}\right)-\Theta\left(\gamma_{\ga_n}+\nu^{n}_{N,0}\right)$, that
\begin{equation}
\left\Vert \mu_{N,nT+t}-\Gamma_{\Theta\left(\mu^N_{nT}\right)+t}\right\Vert_{\mathbf{H}^{-r}_\theta}\leq \left(1+C_\Gamma\right)2 C_\Phi h_n\leq \left(1+C_\Gamma\right)2 C_\Phi \gep_0\leq \gep,
\end{equation}
which is the first point of Proposition~\ref{prop:closeness to M}.

\medskip

By assumption, one has that $ \left\Vert  \nu_{ N, 0}^{ (1)} \right\Vert_{ \mathbf{ H}_{ \theta}^{ -r}} \leq h_{ 1}$. Suppose by inductive hypothesis that $ \left\Vert  \nu_{ N, 0}^{ (n)} \right\Vert_{ \mathbf{ H}_{  \theta}^{ -r}}\leq h_{ n} $ for some $n$ and define
\begin{equation}
t_n:= \inf \left\lbrace t\in[0,  T],\ \left\Vert  \nu_{ N, t}^{ (n)} \right\Vert_{ \mathbf{ H}_{ \theta}^{ -r}} \geq 2 C_{ \Phi} h_{ n}\right\rbrace
\end{equation}
Then, $ t_n>0$ and from Theorem~\ref{th:Gamma}, \eqref{eq:mild_nuN} and Lemma~\ref{lem:Rnu}, we obtain
\begin{align}
\left\Vert \nu_{N,t}^{ (n)}\right\Vert_{\mathbf{H}^{-r}_{\theta}}&\leq  C_\Phi e^{-\lambda t}h_{ n}\nonumber\\
&+C_\Phi C_R\int_0^t  \left\lbrace \left(1+ \frac{e^{-\lambda(t-s)}}{\sqrt{t-s}}\right)\left(\frac{1}{N}\left(1+\left\Vert  \eta_{N,s}^{ (n)} \right\Vert_{H^{-(r-1)}_\theta}\right) +\gd \left\Vert\nu_{N,s}^{ (n)} \right\Vert_{\mathbf{H}^{-r}_\theta}^{ 2} \right)\right\rbrace {\rm d}s\nonumber\\
&+\left\Vert Z_{N,t\wedge \tau^n}^{ (n)} \right\Vert_{\mathbf{H}^{-r}_\theta}.\label{eq:bound mild nun}
\end{align}
Using the definition of $\cC_{\gep,N}$ and the fact that $\Gamma_t$ is bounded in $\mathbf{H}^{-r+1}_\theta$, we have the uniform a priori bound 
\begin{equation}
\sup_{ s\in [0,  T]} \sup_{ n\geq 0}\left\Vert \eta_{N,s}^{ (n)} \right\Vert_{H^{-(r-1)}_\theta}\leq \kappa_{ 1} + \sup_{ s\geq0}\left\Vert g_{ N} - q_{s} \right\Vert_{ H_{ \theta}^{ -(r-1)}}:=  \kappa_{ 5}.
\end{equation}
Then, for $t\leq t_n$
\begin{align}
\left\Vert  \nu_{N,t}^{ (n)}\right\Vert_{\mathbf{H}^{-r}_{\theta}}&\leq  C_\Phi e^{-\lambda t}h_{ n}+C_\Phi C_R \left(\frac{1}{N}\left(1+  \kappa_{ 5}\right) +\gd h_{ n}^{ 2} \right) \left( T+ \sqrt{ \frac{ \pi}{ \lambda}}\right)+N^{-\frac12+\frac{\xi}{3}},
\end{align}
by the control on the noise we have obtained in Proposition~\ref{prop:control noise} (recall the definition of $ \mathcal{ B}_N$ in \eqref{eq:BN}). Note that since $n \leq n_0$, we have $ h_{ n} \geq N^{ - \frac{ 1}{ 2} + \frac{ 2 \xi}{ 3}}$ and thus $N^{ - \frac{ 1}{ 2} + \frac{ \xi}{ 3}} \leq h_{ n}N^{ -\frac{ \xi}{ 3}}$ as well as $ \frac{ 1}{ N}\leq h_{ n} N^{ - \frac{ 1}{ 2} - \frac{ 2 \xi}{ 3}}$. Hence, we obtain from the last inequality that
\begin{align}
\left\Vert \nu_{N,t}^{ (n)}\right\Vert_{\mathbf{H}^{-r}_{\theta}}&\leq  C_\Phi e^{-\lambda t}h_{ n}+ C(N, \varepsilon_{ 0}) h_{ n},
\end{align}
with $C(N, \varepsilon_{ 0})\to 0$ as $N\to\infty$ and $ \varepsilon_{ 0}\to 0$. Hence, choosing $N$ sufficiently large and $ \varepsilon_{ 0}\to 0$ such that $C(N, \varepsilon_{ 0})<1$, we obtain that $t_n\geq T$ and we obtain
\begin{align}
\left\Vert  \nu_{N,  T}^{ (n)}\right\Vert_{\mathbf{H}^{-r}_{\theta}}&\leq  C_\Phi e^{-\lambda  T}h_{ n}+ C(N, \varepsilon_{ 0}) h_{ n}\leq \frac{ h_{ n}}{ 4 C_{ P}} + C(N, \varepsilon_{ 0})h_{ n} \leq \frac{ 3}{ 8 C_{ P}} h_{ n},
\end{align}
choosing $N$ large enough and $ \varepsilon_{ 0}$ small enough. It remains to show that $ \left\Vert  \nu_{ N, 0}^{ (n+1)} \right\Vert_{ \mathbf{ H}_{ \theta}^{ -r}} \leq \frac{ h_{ n}}{ 2}$ in order to conclude the recursion. Namely, since $ \nu_{N,0}^{ (n+1)}=\Gamma_{ \ga_{n+1}}+ \nu_{N,  T}^{ (n)}-\Gamma_{ \ga_n+  T}$ and $P^{st}_{ \ga_{n+1}} \nu_{N,0}^{ (n+1)}= \nu_{N,0}^{ (n+1)}$, we have
\begin{equation}\label{eq:decom Pst nu n+1}
 \nu_{N,0}^{ (n+1)} = P^{st}_{ \ga_{n+1}}\left(\Gamma_{ \ga_{n+1}}-\Gamma_{ \ga_n+ T} \right)+\left( P^{st}_{  \ga_{n+1}}-P^{st}_{ \ga_n+T}\right)\nu_{N,T}^{ (n)}+P^{st}_{ \ga_n+  T} \nu_{N,  T}^{ (n)}.
\end{equation}
Let us estimates these three terms separately. For the last term we directly get
\begin{align}\label{eq:term1}
\left\Vert P^{st}_{ \ga_n+ T}  \nu_{N, T}^{ (n)}\right\Vert_{\mathbf{H}^{-r}_{\theta}}&\leq C_P\left\Vert  \nu_{N, T}^{ (n)}\right\Vert_{\mathbf{H}^{-r}_{\theta}} \leq \frac{ 3}{ 8} h_{ n}
\end{align}
For the first term, relying on the regularity in time of $\Gamma_t$ provided by Theorem~\ref{th:Gamma}, we get
\begin{equation}
\left\Vert \Gamma_{\ga+u}-\Gamma_{\ga}-u\, \partial_\ga \Gamma_{\ga}\right\Vert_{\mathbf{H}^{-r}_\theta}\leq C_\Gamma |u|^2,
\end{equation}
we have, recalling that $P^{st}_{ \ga_{n+1}}\partial_{\ga} \Gamma_{ \ga_{n+1}}=0$,
\begin{equation}\label{eq:bound norm Pst delta Gamma}
 \left\Vert P^{st}_{ \ga_{n+1}}\left(\Gamma_{ \ga_{n+1}}-\Gamma_{ \ga_n+T} \right)\right\Vert_{\mathbf{H}^{-r}_\theta} \leq C_\Gamma C_P\left| \ga_{n+1}- \ga_n-T\right|^2.
\end{equation}
So, remarking that
\begin{equation}\label{eq:bound delta ga}
\left| \ga_{n+1}- \ga_{n}-T\right| =\left|\mathbf{proj}\left(\mu_{(n+1) T}\right)-\mathbf{proj}\left(\Gamma_{(n+1) \tilde{T}}\right)\right|  \leq C_{\mathbf{proj}}\left\Vert \nu_{N, T}^{ (n)}\right\Vert_{\mathbf{H}^{-r}_{\theta}},
\end{equation}
we obtain, recalling \eqref{eq:bound norm Pst delta Gamma},
\begin{align}
 \Big\Vert P^{st}_{ \ga_{n+1}}\big(\Gamma_{ \ga_{n+1}}&-\Gamma_{ \ga_n+ T} \big)\Big\Vert_{\mathbf{H}^{-r}_\theta}
\leq C_\Gamma C_P C_{\mathbf{proj}}^2 \left( \frac{ 3}{ 8 C_{ P}} h_{ n}\right)^2 \leq \frac{ 1}{ 16} h_{ n},
\end{align}
for $ \varepsilon_{ 0}$ small enough. Finally, for the second term of \eqref{eq:decom Pst nu n+1} we get, using the same estimates,
\begin{align}
\Big\Vert \big( P^{st}_{ \ga_{n+1}}&-P^{st}_{ \ga_n+ T}\big) \nu_{n, T}\Big\Vert_{\mathbf{H}^{-r}_\theta} \leq C_P \left| \ga_{n+1}-  \ga_{n}- T\right| \left\Vert\nu_{n, T}\right\Vert_{\mathbf{H}^{-r}_{\theta}}\nonumber\\
&\leq C_PC_{\mathbf{proj}}\left( \frac{ 3}{ 8 C_{ P}} h_{ n}\right)^2 \leq \frac{ 1}{ 16} h_{ n}.
\end{align}
Gathering the previous estimates gives finally that $ \left\Vert \nu_{ N, 0}^{ (n+1)} \right\Vert_{ \mathbf{ H}_{ \theta}^{ -r}} \leq \frac{ h_{ n}}{ 2}$, which concludes the recursion.

\bigskip

\textbf{Second step: staying close to the manifold.}
We proceed by again by induction to show that for all $n\in \{n_0,\ldots, n_f\}$ we have $ \left\Vert  \nu_{ N, 0}^{ (n)} \right\Vert_{ \mathbf{ H}_{  \theta}^{ -r}}\leq N^{-\frac12+\frac{2\xi}{3}}$ and that $\sup_{t\in [0,T]}\left\Vert  \nu_{ N, t}^{ (n)} \right\Vert_{ \mathbf{ H}_{ \theta}^{ -r}}\leq 2C_\Phi N^{-\frac12+\frac{2\xi}{3}}$. With the same estimates as above this leads to the second part of Proposition~\ref{prop:closeness to M}.  Suppose that for some $n$ we have $ \left\Vert  \nu_{ N, 0}^{ (n)} \right\Vert_{ \mathbf{ H}_{  \theta}^{ -r}}\leq N^{-\frac12+\frac{2\xi}{3}}$  and again that the event $\cC_{\gep',N}$ is satisfied. Define
\begin{equation}
\tilde t_n:= \inf \left\lbrace t\in[0,  T],\ \left\Vert  \nu_{ N, t}^{ (n)} \right\Vert_{ \mathbf{ H}_{ \theta}^{ -r}} \geq 2C_\Phi N^{-\frac12+\frac{2\xi}{3}}\right\rbrace.
\end{equation}
Clearly $\tilde t_n>0$ and \eqref{eq:bound mild nun} implies this time that
\begin{align}
\left\Vert \nu_{N,t}^{ (n)}\right\Vert_{\mathbf{H}^{-r}_{\theta}}&\leq  C_\Phi e^{-\lambda t}N^{-\frac12+\frac{2\xi}{3}}\nonumber\\
&\quad +C_\Phi C_R\int_0^t  \left\lbrace \left(1+ \frac{e^{-\lambda(t-s)}}{\sqrt{t-s}}\right)\left(\frac{1}{N}\left(1+\kappa_5\right) +\gd \frac{ N^{-1+2\xi}}{\left(1+C_\Gamma\right)^2} \right)\right\rbrace {\rm d}s
+N^{-\frac12+\frac{\xi}{3}},
\end{align}
so that $\tilde t_n\geq T$ for $N$ large enough. It remains to prove that $ \left\Vert  \nu_{ N, 0}^{ (n+1)} \right\Vert_{ \mathbf{ H}_{  \theta}^{ -r}}\leq N^{-\frac12+\frac{2\xi}{3}}$. To obtain this estimate, remark that, recalling \eqref{hyp:T 2}, we get for $N$ large enough
\begin{equation}
\left\Vert \nu_{N,T}^{ (n)}\right\Vert_{\mathbf{H}^{-r}_{\theta}}\leq \frac{3}{8C_P}N^{-\frac12+\frac{\xi}{3}},
\end{equation}
and we rely again on \eqref{eq:decom Pst nu n+1} with this time $h_n$ replaced by $N^{-\frac12+\frac{2\xi}{3}}$. This time the three terms can be estimated as follows:
\begin{align}
\left\Vert P^{st}_{ \ga_n+ T}  \nu_{N, T}^{ (n)}\right\Vert_{\mathbf{H}^{-r}_{\theta}}&\leq C_P\left\Vert  \nu_{N, T}^{ (n)}\right\Vert_{\mathbf{H}^{-r}_{\theta}} \leq \frac{ 3}{ 8}N^{-\frac12+\frac{2\xi}{3}},
\end{align}
\begin{align}
 \Big\Vert P^{st}_{ \ga_{n+1}}\big(\Gamma_{ \ga_{n+1}}&-\Gamma_{ \ga_n+ T} \big)\Big\Vert_{\mathbf{H}^{-r}_\theta}
\leq C_\Gamma C_P C_{\mathbf{proj}}^2 \left( \frac{ 3}{ 8 C_{ P}}N^{-\frac12+\frac{2\xi}{3}}\right)^2 ,
\end{align}
and
\begin{align}
\Big\Vert \big( P^{st}_{ \ga_{n+1}}&-P^{st}_{ \ga_n+ T}\big) \nu_{n, T}\Big\Vert_{\mathbf{H}^{-r}_\theta} \leq  C_PC_{\mathbf{proj}}\left( \frac{ 3}{ 8 C_{ P}} N^{-\frac12+\frac{2\xi}{3}}\right)^2 ,
\end{align}
and we deduce that $ \left\Vert  \nu_{ N, 0}^{ (n+1)} \right\Vert_{ \mathbf{ H}_{  \theta}^{ -r}}\leq N^{-\frac12+\frac{2\xi}{3}}$ for $N$ taken large enough. This concludes the proof of Proposition~\ref{prop:closeness to M}.
\end{proof}

\section{Proof of Theorem \ref{th:main}}

Define, for $\mu=(p,m) \in \mathbf{H}^{-r+2}_\theta$,
\begin{equation}
\cU_{\mu} = \Big(\nabla\cdot (\gs^2 \nabla p)+\nabla \cdot(p Kx) -\nabla\cdot \big(p \left(F_{m}-\langle p,F_{m} \rangle\right)\big),\langle p,F_m\rangle\Big),
\end{equation}
and consider the stopping time (recall Proposition~\ref{prop: bound mu1t})
\begin{equation}
\tau_{r} = \inf\left\{t:\, \left\Vert p_{N,t}- g_{ N}\right\Vert_{H_\theta^{-r+2}}> \kappa_1\right\}.
\end{equation}

\begin{lemma}
Suppose the hypotheses of Theorem~\ref{th:main} satisfied. Then, for all $g=(\varphi,\psi)\in \mathbf{H}^{r}_\theta$, we have the 
following identity in $\mathbf{H}^{-r}_\theta$: for $t\in [0, N t_f]$
\begin{multline}
\llangle[\big] \mu_{N,t\wedge \tau_{r} }; g\rrangle[\big]  =\llangle[\big] \mu_{N,0}; g\rrangle[\big]
+\int_0^{t\wedge \tau_{r} } \llangle[\big] \cU_{\mu_{N,s}}; g\rrangle[\big]\dd s 
-\frac1N \int_0^{t\wedge \tau_{r} } \left\langle \nabla \cdot (\gs^2\nabla p_{N,s}),\varphi\right\rangle \dd s\\+ M_{N,t\wedge \tau_{r} }(g),
\end{multline}
where
\begin{equation}
M_{N,t} =\frac{\sqrt{2}}{N}\sum_{i=1}^N\sum_{k=1}^d \gs_k \int_0^t \left(-\partial_{x_k}\left(\gd_{Y_{i,s}}- p_{N,s}\right),e_k\right) \dd B_{i,k,s }
\end{equation}
is a Martingale in $\mathbf{H}^{-r}_\theta$, with tensor quadratic variation $ \llbracket M_{N,\cdot}\rrbracket_t$ given by
\begin{equation}\label{eq:def Mbracket}
\llbracket M_{N,\cdot}\rrbracket_t  =\frac{2}{N^2}\sum_{i=1}^N\sum_{k=1}^d \gs_k^2\int_0^t \left(-\partial_{x_k}\left(\gd_{Y_{i,s}}-p_{N,s}\right),e_k\right)\otimes \left(-\partial_{x_k}\left(\gd_{Y_{i,s}}-p_{N,s}\right),e_k\right)\dd s.
\end{equation}
\end{lemma}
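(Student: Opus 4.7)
The plan is to apply Itô's formula to smooth test functions and then pass to the full Hilbert space formulation by a density argument, mirroring closely the strategy used for the derivation of \eqref{eq:decomp mu1-g} and \eqref{eq:mild_nuN}.

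For smooth $(\varphi,\psi)\in \mathbf{H}^r_\theta$, I first apply Itô's formula to $\varphi(Y_{i,t})$ using the SDE \eqref{eq:eds_Yi}. The covariation of $Y_{i}$ computes as $\dd\langle Y_{i,k}\rangle_t=2\sigma_k^2(1-1/N)\dd t$, so the Itô correction yields a factor $(1-1/N)$ in front of $\nabla\cdot(\sigma^2\nabla\varphi)(Y_{i,s})$. Summing over $i$, dividing by $N$, and adding the evolution of $m_{N,t}\cdot\psi$ obtained from the $m_N$-equation in \eqref{eq:eds_Yi} produces a weak identity for $\llangle \mu_{N,t};g\rrangle$. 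Formal integration by parts (justified rigorously by density of $\mathcal{C}^\infty_c$ in the relevant weighted Sobolev spaces, exactly as in the proof of \eqref{eq:decomp mu1-g}) converts the empirical averages $\tfrac{1}{N}\sum_i \Phi(Y_{i,s})$ into pairings $\langle p_{N,s},\Phi\rangle$. Collecting the resulting pairings of $p_{N,s}$ against $\nabla\cdot(\sigma^2\nabla\varphi)$, $Kx\cdot\nabla\varphi$ and $(F_{m_{N,s}}-\langle p_{N,s},F_{m_{N,s}}\rangle)\cdot\nabla\varphi$, together with the $\psi$-component $\psi\cdot\langle p_{N,s},F_{m_{N,s}}\rangle$, reconstructs precisely $\llangle\cU_{\mu_{N,s}};g\rrangle$. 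The deficit factor $(1-1/N)$ is what leaves behind the correction term $-\frac{1}{N}\langle\nabla\cdot(\sigma^2\nabla p_{N,s}),\varphi\rangle$.

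For the martingale term, the noise contributions from both the $Y_{i,t}$-dynamics and the $m_{N,t}$-dynamics combine, after rearranging the coupling $\frac{1}{N^2}\sum_{i,j}(\cdot)_i\dd B_{j,s}=\frac{1}{N}\sum_j\langle p_{N,s},(\cdot)\rangle\dd B_{j,s}$, into
\begin{equation*}
M_{N,t}(g)=\frac{\sqrt{2}}{N}\sum_{i,k}\sigma_k\int_0^t \left[\partial_{x_k}\varphi(Y_{i,s})-\langle p_{N,s},\partial_{x_k}\varphi\rangle+\psi_k\right]\dd B_{i,k,s}.
\end{equation*}
Via the pairing $\llangle(-\partial_{x_k}(\delta_{Y_{i,s}}-p_{N,s}),e_k);(\varphi,\psi)\rrangle=\partial_{x_k}\varphi(Y_{i,s})-\langle p_{N,s},\partial_{x_k}\varphi\rangle+\psi_k$, this matches exactly the definition of $M_{N,t}$ in the statement, and the tensor formula \eqref{eq:def Mbracket} for the Meyer process follows directly from the mutual independence of the Brownian motions $(B_{i,k})_{i,k}$.

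The main technical obstacle is to justify rigorously that $M_{N,\cdot}$ is a continuous $\mathbf{H}^{-r}_\theta$-valued martingale on $[0,t\wedge\tau_r]$ and to identify its Meyer process. This is handled as in Lemmas~\ref{lem:VNi} and~\ref{lem:MNs}: taking a complete orthonormal basis $(g_\ell)_{\ell\geq 0}$ of $\mathbf{H}^r_\theta$, each $M_{N,\cdot}(g_\ell)$ is a genuine real-valued continuous martingale, and one has to show summability across $\ell$ of $\mathbf{E}[\sup_{s\leq t\wedge\tau_r}M_{N,s}(g_\ell)^2]$ through Doob's inequality. This follows by combining Lemma~\ref{lem:control_delta_x} (which bounds $\|\delta_{Y_{i,s}}\|_{\mathbf{H}^{-r}_\theta}$ by an exponential of $|Y_{i,s}|_{K\sigma^{-2}}^2$), Lemma~\ref{lem:control_exp_Yi} (which provides the uniform-in-time bound on $\mathbf{E}[w_{-\gamma}(Y_{i,s})]$), and the key uniform control $\|p_{N,s}\|_{H^{-r+2}_\theta}\leq \kappa_1+\|g_N\|_{H^{-r+2}_\theta}$ valid for $s\leq\tau_r$ by Proposition~\ref{prop: bound mu1t}. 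Together these yield almost-sure continuity in $\mathbf{H}^{-r}_\theta$ and allow one to read off \eqref{eq:def Mbracket} as the tensor quadratic variation.
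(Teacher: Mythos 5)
Your proposal is correct and follows essentially the same route as the paper: invoke Itô's formula via the already-established weak identities \eqref{eq:muN_weak} and \eqref{eq:mN_weak}, rewrite the noise contribution as $\frac{\sqrt 2}{N}\sum_{i,k}\sigma_k\int_0^t\bigl(\partial_{x_k}\varphi(Y_{i,s})-\langle p_{N,s},\partial_{x_k}\varphi\rangle+\psi_k\bigr)\,\dd B_{i,k,s}$, identify it with the pairing against $(-\partial_{x_k}(\delta_{Y_{i,s}}-p_{N,s}),e_k)$, and then justify the Hilbert-valued martingale property and the Meyer process by the argument pattern of Lemmas~\ref{lem:VNi} and~\ref{lem:MNs}. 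You are somewhat more explicit than the paper about where the $(1-1/N)$ factor in the Itô correction comes from and why it produces exactly the term $-\tfrac1N\langle\nabla\cdot(\sigma^2\nabla p_{N,s}),\varphi\rangle$, which is a useful clarification but not a different proof.
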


\begin{proof}
It is a direct consequence of the It\^o's Lemma previously applied in \eqref{eq:muN_weak} and \eqref{eq:mN_weak}, remarking that we obtain 
\begin{align}
M_{N,t}(g)&=\frac{\sqrt{2}}{N}\sum_{i=1}^N\int_0^t \nabla \varphi(Y_{i,s})\cdot \gs \left( \dd B_{i,s } -\frac{1}{N}\sum_{j=1}^N \dd B_{j,s}\right)+\frac{\sqrt{2}}{N}\int_0^t \psi \cdot \gs \sum_{j=1}^N \dd B_{j,t}\nonumber\\ 
&=\frac{\sqrt{2}}{N}\sum_{i=1}^N\int_0^t \left( \nabla \varphi({Y_{i,s}})-\langle p_{N,s},\nabla \varphi\rangle+\psi\right)\cdot \gs  \dd B_{i,s }\nonumber\\
&=\frac{\sqrt{2}}{N}\sum_{i=1}^N\sum_{k=1}^d \gs_k \int_0^t\llangle[\big] \left(-\partial_{x_k}\left(\gd_{Y_{i,s}}- p_{N,s}\right),e_k\right);(\varphi,\psi)\rrangle[\big]\,  \dd B_{i,k,s }.
\end{align}
Similar arguments as given in the proofs of Lemma~\ref{lem:VNi} and Lemma~\ref{lem:MNs} show that $M_{N,t}$ is a martingale in $\mathbf{H}^{-r}_\theta$ with continuous trajectories.
\end{proof}

It is well know (see for example \cite{MR3155209}) that the operator$ \mathcal{ L}_{ \theta}^{ \ast}$, defined in \eqref{eq:L_OU}, admits the decomposition, for all $l\in \bbN^d$,
\begin{equation}\label{eq:decomp Ltheta}
\cL_{ \theta}^{ \ast} \psi_{l,\theta} = -\lambda_l \psi_{l,\theta}, \quad \text{with} \quad \lambda_l = \theta\sum_{i=1}^d k_il_i\quad \text{and}\quad  \psi_{l,\theta}(x) = \prod_{i=1}^dh_{l_i}\left(\sqrt{\frac{ \theta k_i}{\gs_i^2}}x_i\right),
\end{equation}
where $h_n$ is the $n^\text{th}$ renormalized Hermite polynomial:
\begin{equation}
h_{ n}(x)= \frac{ \left(-1\right)^{ n}}{ \sqrt{ n!} (2 \pi)^{ \frac{ 1}{ 4}}} e^{ \frac{ x^{ 2}}{ 2}} \frac{ {\rm d}^{ n}}{ {\rm d}x^{ n}} \left\lbrace e^{ - \frac{ x^{ 2}}{ 2}}\right\rbrace.
\end{equation}
The family $( \psi_{ l, \theta})_{l\in \bbN^d}$ is an orthonormal basis of $L^2_\theta$. Defining
\begin{equation}\label{def:polynome}
\psi_{l,r,\theta}=\left(1+\lambda_l\right)^{-\frac{r}{2}}\psi_{l,\theta} =\left(1-\cL^*_\theta\right)^{-\frac{r}{2}}\psi_{l,\theta},
\end{equation}
 we get an orthonormal basis of $H^{r}_\theta$, which means that $\left\{\left(\psi_{l,r,\theta},0\right)\right\}_{l\in\bbN^d}\cup \{(0,e_k)\}_{k=1,\ldots,d}$ is an orthonormal basis of $\mathbf{H}^r_\theta$, and, relying on our ``pivot" space structure, $\left\{\left(u_{l,r,\theta},0\right)\right\}_{l\in\bbN^d}\cup \{(0,e_k)\}_{k=1,\ldots,d}$ is its dual basis in $\mathbf{H}^{-r}_\theta$, where
\begin{equation}
u_{l,r,\theta} = \left(1+\lambda_l\right)^{\frac{r}{2}}w_\theta \,\psi_{l,\theta}.
\end{equation}
We will denote
\begin{equation}
\partial_{u_{l,r,\theta}}\Theta_\mu = D\Theta_\mu \left(u_{l,r,\theta},0\right),\quad \partial_{e_k}\Theta_\mu = D\Theta_\mu \left(0,e_k\right),
\end{equation}
\begin{equation}
\partial^2_{u_{l,r,\theta}\,u_{l',r,\theta}}\Theta_\mu = D^2\Theta_\mu \left(\left(u_{l,r,\theta},0\right),\left(u_{l',r,\theta},0\right)\right),\quad \partial^2_{e_k\,e_{k'}}\Theta_\mu = D^2\Theta_\mu\left(\left(0,e_k\right),\left(0,e_{k'}\right)\right),
\end{equation}
and
\begin{equation}
\partial^2_{u_{l,r,\theta}\,e_k}\Theta_\mu = D^2\Theta_\mu \left(\left(u_{l,r,\theta},0\right),\left(0,e_k\right)\right).
\end{equation}
Remark that we these notations we have
\begin{equation}
D\Theta_{\mu} v=\sum_{l\in \bbN^d}\partial_{u_{l,r,\theta}}\Theta_\mu\,  \llangle[\big] v;(\psi_{l,r,\theta},0)\rrangle[\big]+\sum_{k=1}^d  \partial_{e_k}\Theta_\mu \,\llangle[\big] v;(0,e_k)\rrangle[\big],
\end{equation}
and
\begin{align}\label{eq: decomp D2Theta}
D^2\Theta_{\mu} (v,v')=&\sum_{l,l'\in \bbN^d}\partial^2_{u_{l,r,\theta}\,u_{l',r,\theta}}\Theta_\mu\,  \llangle[\big] v;(\psi_{l,r,\theta},0)\rrangle[\big]\, \llangle[\big] v';(\psi_{l',r,\theta},0)\rrangle[\big] \\
&+\sum_{k=1}^d\sum_{l\in \bbN^d}\partial^2_{u_{l,r,\theta}\,e_k}\Theta_\mu\,  \llangle[\big] v;(\psi_{l,\theta,r},0)\rrangle[\big]\, \llangle[\big] v';(0,e_k)\rrangle[\big] \nonumber\\
&+\sum_{k=1}^d\sum_{l\in \bbN^d}\partial^2_{e_k\, u_{l,r,\theta}}\Theta_\mu\,   \llangle[\big] v;(0,e_k)\rrangle[\big] \, \llangle[\big] v';(\psi_{l,r,\theta},0)\rrangle[\big]\nonumber\\
&+\sum_{k,k'=1}^d  \partial^2_{e_k\,e_{k'}}\Theta_\mu \,\llangle[\big] v;(0,e_k)\rrangle[\big]\, \llangle[\big] v';(0,e_{k'})\rrangle[\big]\nonumber ;
\end{align}
Finally, let us denote
\begin{equation}\label{eq:def psitheta}
\psi^\Theta_{t,r}=\sum_{l\in \bbN^d} \partial_{u_{l,r,\theta}}\Theta_{\Gamma_{t}} \, \psi_{l,r,\theta},
\end{equation}
so that
\begin{equation}
D\Theta_{\Gamma_t}(v,0) = \left\langle v, \psi^\Theta_{t,r}\right\rangle .
\end{equation}

Our aim, in the following proof Theorem~\ref{th:main}, is to show that $\mu_{N,Nt}$ stays close to $\Gamma_{u_0+Nt+v_{N,t}}$, where $v_{N,t}$ converges weakly to a diffusion with constant drift $b$ and constant diffusion coefficient $a^2$ given by
\begin{align}
b=& \frac{1}{T_\gd}  \sum_{l\in \bbN^d}\int_0^{T_\gd}\partial_{u_{l,r,\theta}}\Theta_{\Gamma_{s}}\left\langle \Gamma^1_{s},\nabla  \left(\gs^2\cdot \nabla \psi_{l,r,\theta}\right)\right\rangle \dd s\nonumber\\
& +\frac{2}{T_\gd} \sum_{l,l'\in \bbN^d}\int_0^{T_\gd}\bigg\langle \Gamma^1_{s},\partial^2_{u_{l,r,\theta'}\,u_{l',r,\theta'}}\Theta_{\Gamma_s} \left(\nabla\psi_{l,r,\theta'}-\langle  \Gamma^1_{s},\nabla \psi_{l,r,\theta'}\rangle\right)\nonumber\\
&\qquad \qquad \qquad \qquad \qquad \qquad \qquad \qquad \qquad\qquad \cdot \gs^2 \left(\nabla \psi_{l',r,\theta'}-\langle  \Gamma^1_{s},\nabla \psi_{l',r,\theta'}\rangle\right)\bigg\rangle \dd s\nonumber\\
&\quad +\frac{2}{T_\gd} \sum_{k=1}^d \gs_k^2 \int_0^{T_\gd} \partial^2_{e_k\,e_k}\Theta_{\Gamma_s} \dd s ,\label{eq: formula b}
\end{align}  
and
\begin{equation}
a^2 = \frac{2}{T_\gd}\sum_{k=1}^d \gs_k^2 \int_{0}^{T_\gd}\left\langle\Gamma^1_{s}, \left( \partial_{x_k}\psi^\Theta_{s,r}-\left\langle \Gamma^1_{s},\partial_{x_k}\psi^\Theta_{s,r}\right\rangle\right)^2 \right\rangle \dd s + \frac{2}{T_\gd}\sum_{k=1}^d \gs_k^2 \int_{0}^{T_\gd}\left(\partial_{e_k}\Theta_{\Gamma_{s}}\right)^2 \dd s.\label{eq: formula a2} 
\end{equation}

\begin{proof}[Proof of Theorem~\ref{th:main}]
Let us define the stopping time $\tau_N=(n_\tau T+\tau)\wedge \tau_r$, where $n_\tau$ and $\tau$ are defined in \eqref{eq:def ntau tau}. If $\mu_t$ is a solution to \eqref{eq:syst PDE} we have $\Theta(\mu_t)=\Theta(\mu_0)+t$, and thus, taking the derivative in time, $D\Theta_{\mu_t}\cU_{\mu_t}=1$. So, applying Itô's Lemma in Hilbert spaces, we get
\begin{align}\label{eq:Ito Theta(mu)}
\Theta(\mu_{N,t\wedge \tau_N})=&\Theta(\mu_{N,0})+t\wedge \tau_N-\frac1N\int_0^{t\wedge \tau_N} D_1\Theta_{\mu_{N,s}} \nabla \cdot (\gs^2\nabla p_{N,s})\dd s\\
&+\frac12 \int_0^{t\wedge\tau_N} D^2\Theta_{\mu_{N,s}}\,\dd \llbracket M_{N,\cdot} \rrbracket_s + W_{N,t}\nonumber,
\end{align}
where
\begin{align}
&\int_0^{t\wedge\tau_N} D^2\Theta_{\mu_{N,s}}\,\dd \llbracket M_{N,\cdot} \rrbracket_s\nonumber\\
& = \frac{2}{N^2} \sum_{i=1}^N\sum_{k=1}^d\gs^2_k\int_0^{t\wedge\tau_N} D^2\Theta_{\mu_{N,s}} \left(\left(-\partial_{x_k}\left(\gd_{Y_{i,s}}-p_{N,s}\right),e_k\right), \left(-\partial_{x_k}\left(\gd_{Y_{i,s}}-p_{N,s}\right),e_k\right)\right)\dd s \nonumber\\
& = \frac{2}{N^2} \sum_{i=1}^N\sum_{k=1}^d\gs^2_k\int_0^{t\wedge\tau_N} D^2\Theta_{\mu_{N,s}} \left(\left(\partial_{x_k}\gd_{Y_{i,s}},0\right), \left(\partial_{x_k}\gd_{Y_{i,s}},0\right)\right)\dd s \nonumber\\
&\quad + \frac{2}{N} \sum_{k=1}^d\gs^2_k\int_0^{t\wedge\tau_N} D^2\Theta_{\mu_{N,s}} \left(\left(\partial_{x_k}p_{N,s},0\right), \left(\partial_{x_k} p_{N,s},0\right)\right)\dd s \nonumber\\
&\quad + \frac{2}{N}\sum_{k=1}^d\gs^2_k\int_0^{t\wedge\tau_N} \partial^2_{e_k\, e_k}\Theta_{\mu_{N,s}} \dd s ,
\end{align}
\begin{equation}
W_{N,t\wedge \tau_N} = \frac{\sqrt{2}}{N}\sum_{i=1}^N\sum_{k=1}^d \gs_k\int_0^{t\wedge \tau^N} D\Theta_{\mu_{N,s}}\left(-\partial_{x_k}\left(\gd_{Y_{i,s}}- p_{N,s}\right),e_k\right)  \dd B_{i,k,s },
\end{equation}
and
\begin{equation}
\left[W_{n,\cdot} \right]_{t\wedge \tau^N} =\frac{2}{N^2}\sum_{i=1}^N\sum_{k=1}^d \gs_k^2\int_0^{t\wedge \tau^N} \left| D\Theta_{\mu_{N,s}}\left(-\partial_{x_k}\left(\gd_{Y_{i,s}}- p_{N,s}\right),e_k\right)\right|^2 \dd s.
\end{equation}
\medskip

Our aim is to prove the convergence in probability of the drift and quadratic variation of the process $v_{N,t}=\Theta\left(\mu_{N,(Nt)\wedge \tau^N}\right)-\Theta\left(\mu_{N,0}\right)$, which then implies the desired result via classical arguments (see for example \cite{Billingsley}). Let us place ourselves on the event $\{\tau^N\geq Nt_f\}$. We have, recalling Lemma~\ref{lem:control_delta_x},
\begin{align}
\Big|&\left[W_{n,\cdot} \right]_{nT+t}-\left[W_{n,\cdot} \right]_{nT} \Big|\nonumber\\
&\leq \frac{2}{N^2}\sum_{i=1}^N\sum_{k=1}^d \gs_k^2\int_{nT}^{(n+1)T}\left\Vert D\Theta_{\mu_{N,s}}\right\Vert^2_{\cB\left(\mathbf{H}^{-r}_{\frac{3\theta}{4}}\right)}\left\Vert \left(-\partial_{x_k}\left(\gd_{Y_{i,s}}- p_{N,s}\right),e_k\right)\right\Vert^2_{\mathbf{H}^{-r}_{\frac{3\theta}{4}}}\dd s \nonumber\\
&\leq \frac{C_1}{N^2}\sum_{i=1}^N\sum_{k=1}^d \gs_k^2\int_{nT}^{(n+1)T}\left(1+w_{-\frac{\theta}{2}}^2\left(Y_{i,s}\right)+\left\Vert p_{N,s}\right\Vert^2_{H^{-(r+1)}_{\frac{3\theta}{4}}}\right)\dd s\nonumber\\
&\leq \frac{C_2}{N}\int_{nT}^{(n+1)T}\left(1+\left\langle p_{N,s},w_{-\theta}\right\rangle+\left\Vert p_{N,s}\right\Vert^2_{H^{-(r+1)}_{\theta}}\right) \dd s.
\end{align}
where we used Lemma~\ref{lem:comparison H theta theta'}. So, recalling Proposition~\ref{prop: bound mu1t} and remarking that $w_{-\theta}\in H^{r}_\theta$, we have with high probability
\begin{equation}
\sup_{n\in\{0,\ldots, n_f\}} \sup_{t\in [0,T]}\Big|\left[W_{n,\cdot} \right]_{nT+t}-\left[W_{n,\cdot} \right]_{nT} \Big|\leq \frac{C_3}{N}.
\end{equation}
Similarly, we get
\begin{align}
\Bigg|\int_{nT}^{nT+t}& D^2\Theta_{\mu_{N,s}}\,\dd \llbracket M_{N,\cdot} \rrbracket_s\Bigg|\nonumber\\
&\leq \frac{2}{N^2} \sum_{i=1}^N\sum_{k=1}^d\gs^2_k\int_{nT}^{nT+t} \left\Vert D^2\Theta_{\mu_{N,s}} \right\Vert_{\cB\cL\left(\mathbf{H}^{-r}_{\frac{3\theta}{4}}\right)}\left\Vert \left(-\partial_{x_k}\left(\gd_{Y_{i,s}}-p_{N,s}\right),e_k\right)\right\Vert_{\mathbf{H}^{-r}_{\frac{3\theta}{4}}}^2\dd s\nonumber\\
&\leq \frac{C_4}{N}\int_{nT}^{(n+1)T}\left(1+\left\langle p_{N,s},w_{-\theta}\right\rangle+\left\Vert p_{N,s}\right\Vert^2_{H^{-(r+1)}_{\theta}}\right) \dd s,
\end{align}
and
\begin{equation}
\Bigg|\frac1N\int_{nT}^{nT+t} D_1\Theta_{\mu_{N,s}} \nabla \cdot (\gs^2\nabla p_{N,s})\dd s\Bigg|\leq \frac{C_5}{N}\int_{nT}^{(n+1)T}\left\Vert p_{N,s}\right\Vert^2_{H^{-(r+2)}_{\theta}} \dd s,
\end{equation}
so that, with high probability,
\begin{equation}
\sup_{n\in\{0,\ldots, n_f\}} \sup_{t\in [0,T]} \max \Bigg|\int_{nT}^{nT+t} D^2\Theta_{\mu_{N,s}}\,\dd \llbracket M_{N,\cdot} \rrbracket_s\Bigg| \leq \frac{C_6}{N},
\end{equation}
\begin{equation}
\sup_{n\in\{0,\ldots, n_f\}} \sup_{t\in [0,T]} \max \Bigg|\frac1N\int_{nT}^{(n+1)T} D_1\Theta_{\mu_{N,s}} \nabla \cdot (\gs^2\nabla p_{N,s})\dd s\Bigg| \leq \frac{C_7}{N}.
\end{equation}
From these estimates we deduce that the $n_0$ first time steps are negligible at the time scale $Nt$, and that it is sufficient to study the limit of the drift and quadratic variation at the time steps $nT$.

Now remark that Proposition~\ref{prop:closeness to M} imply that with high probability (recall that $T=k_T T_\gd$)
\begin{align}
\frac1N&\int_{nT}^{(n+1)T} D_1\Theta_{\mu_{N,s}} \nabla \cdot (\gs^2\nabla p_{N,s})\dd s \nonumber\\
&= \frac1N\int_{0}^{T} D_1\Theta_{\Gamma_{\Theta\left(\mu_{N,nT}\right)+s}} \nabla \cdot \left(\gs^2\nabla \Gamma^1_{\Theta\left(\mu_{N,nT}\right)+s}\right)\dd s+O\left(N^{-\frac32+\xi}\right)\nonumber\\
&= \frac{k_T}{N} \int_{0}^{T_\gd} D_1\Theta_{\Gamma_{s}}\nabla \cdot \left(\gs^2\nabla \Gamma^1_{s}\right)\dd s+O\left(N^{-\frac32+\xi}\right),
\end{align}
so the last term of the first line of \eqref{eq:Ito Theta(mu)} gives a constant drift $b_1$ in the limit at the time scale $Nt$, given by
\begin{align}
b_1 :&=\frac{1}{T_\gd} \int_{0}^{T_\gd} D_1\Theta_{\Gamma_{s}}\nabla \cdot \left(\gs^2\nabla \Gamma^1_{s}\right)\dd s=\frac{1}{T_\gd}  \sum_{l\in \bbN^d}\int_0^{T_\gd}\partial_{u_{l,r,\theta}}\Theta_{\Gamma_{s}}\left\langle \Gamma^1_{s},\nabla  \left(\gs^2\cdot \nabla \psi_{l,r,\theta}\right)\right\rangle \dd s.
\end{align}
Moreover we get, relying again on Proposition~\ref{prop:closeness to M}, with high probability (recall \eqref{eq:def psitheta})
\begin{align}
&\left[W_{n,\cdot} \right]_{(n+1)T}-\left[W_{n,\cdot} \right]_{nT} \nonumber\\
&=\frac{2}{N^2}\sum_{i=1}^N\sum_{k=1}^d \gs_k^2\int_{0}^{T} \left| D\Theta_{\Gamma_{\Theta\left(\mu_{N,nT}\right)+s}}\left(-\partial_{x_k}\left(\gd_{Y_{i,nT+s}}- \Gamma^1_{\Theta\left(\mu_{N,nT}\right)+s}\right),e_k\right)\right|^2 \dd s\nonumber\\
&\quad  +O\left(N^{-\frac32+\xi}\right)\nonumber\\
& =\frac{2}{N^2}\sum_{i=1}^N\sum_{k=1}^d \gs_k^2 \int_{0}^{T}\left( \partial_{x_k}\psi^\Theta_{\Theta\left(\mu_{N,nT}\right)+s,r}\left(Y_{i,nT+s}\right)-\left\langle \Gamma^1_{\Theta\left(\mu_{N,nT}\right)+s},\partial_{x_k}\psi^\Theta_{\Theta\left(\mu_{N,nT}\right)+s,r}\right\rangle\right)^2 \dd s\nonumber\\
&\quad + \frac{2}{N}\sum_{k=1}^d \gs_k^2 \int_{0}^{T}\left(\partial_{e_k}\Theta_{\Gamma_{\Theta\left(\mu_{N,nT}\right)+s}}\right)^2 \dd s+O\left(N^{-\frac32+\xi}\right)\nonumber\\
& =\frac{2k_T}{N}\sum_{k=1}^d \gs_k^2 \int_{0}^{T_\gd}\left\langle\Gamma^1_{s}, \left( \partial_{x_k}\psi^\Theta_{s,r}-\left\langle \Gamma^1_{s},\partial_{x_k}\psi^\Theta_{s,r}\right\rangle\right)^2 \right\rangle \dd s + \frac{2k_T}{N}\sum_{k=1}^d \gs_k^2 \int_{0}^{T_\gd}\left(\partial_{e_k}\Theta_{\Gamma_{s}}\right)^2 \dd s\nonumber\\
&\quad +O\left(N^{-\frac32+\xi}\right).
\end{align}
So the quadratic variation is given in the limit on the timescale $Nt$ by a constant diffusion with coefficient $a^2$ defined by
\begin{equation}
a^2=\frac{2}{T_\gd}\sum_{k=1}^d \gs_k^2 \int_{0}^{T_\gd}\left\langle\Gamma^1_{s}, \left( \partial_{x_k}\psi^\Theta_{s,r}-\left\langle \Gamma^1_{s},\partial_{x_k}\psi^\Theta_{s,r}\right\rangle\right)^2 \right\rangle \dd s + \frac{2}{T_\gd}\sum_{k=1}^d \gs_k^2 \int_{0}^{T_\gd}\left(\partial_{e_k}\Theta_{\Gamma_{s}}\right)^2 \dd s.
\end{equation}
For the last term, since from Theorem~\ref{th:Theta} we have, for $s\in [0,T]$,
\begin{equation}
\left\Vert D^2\Theta_{\mu_{N,nT+s}} - D^2\Theta_{\Gamma^\gd_{\Theta(\mu_{N,nT+s})}}\right\Vert_{\cB\cL (\mathbf{H}^{-r}_\theta)}\leq C_{\Theta,\gd} \left\Vert \mu_{N,nT+s} - \Gamma^\gd_{\Theta(\mu_{N,nT+s})}\right\Vert_{\mathbf{H}^{-r}_\theta},
\end{equation}
and since, from the calculation above, $\Theta\left(\mu_{N,nT+s}\right)-\Theta\left(\mu_{N,nT}\right)-s=O\left(N^{-1}\right)$ with high probability, we deduce
\begin{equation}
\left\Vert D^2\Theta_{\mu_{N,nT+s}} - D^2\Theta_{\Gamma^\gd_{\Theta(\mu_{N,nT})+s}}\right\Vert_{\cB\cL (\mathbf{H}^{-r}_\theta)}= O\left(N^{-\frac32+\xi}\right).
\end{equation}
So, with similar calculations as above we obtain, with high probability,
\begin{align}
&\frac12 \int_{nT}^{(n+1)T} D^2\Theta_{\mu_{N,s}}\,\dd \llbracket M_{N,\cdot} \rrbracket_s \nonumber\\
&= \frac{2}{N^2} \sum_{i=1}^N\sum_{k=1}^d\gs^2_k\int_{0}^{T} D^2\Theta_{\Gamma_{\Theta\left(\mu_{N,nT}\right)+s}} \left(\left(\partial_{x_k}\gd_{Y_{i,nT+s}},0\right), \left(\partial_{x_k}\gd_{Y_{i,nT+s}},0\right)\right)\dd s \nonumber\\
& \quad + \frac{2}{N} \sum_{k=1}^d\gs^2_k\int_{0}^{T}D^2\Theta_{\Gamma_{\Theta\left(\mu_{N,nT}\right)+s}} \left(\left(\partial_{x_k}\Gamma^1_{\Theta\left(\mu_{N,nT}\right)+s},0\right), \left(\partial_{x_k}\Gamma^1_{\Theta\left(\mu_{N,nT}\right)+s},0\right)\right)\dd s\nonumber\\
&\quad + \frac{2}{N}\sum_{k=1}^d\gs^2_k\int_{0}^{T} \partial^2_{e_k\, e_k}\Theta_{\Gamma_{\Theta\left(\mu_{N,nT}\right)+s}}\dd s  +O\left(N^{-\frac32+\xi}\right).
\end{align}
Now, for all $l,l'\in \bbN^d$, on an event of high probability, we have, relying on Proposition~\ref{prop:closeness to M},
\begin{align}
\Bigg|\frac{1}{N}\sum_{i=1}^N \llangle[\big] \left(\partial_{x_k}\gd_{Y_{i,nT+s}},0\right)&;\left(\psi_{l,r',\theta'},0\right)\rrangle[\big] \llangle[\big] \left(\partial_{x_k}\gd_{Y_{i,nT+s}},0\right);\left(\psi_{l',r',\theta'},0\right)\rrangle[\big] \nonumber\\
&\qquad\qquad\qquad\qquad \qquad - \left\langle \Gamma^1_{\Theta\left(\mu_{N,nT}\right)+s}, \partial_{x_k} \psi_{l,r',\theta'}\, \partial_{x_k} \psi_{l',r',\theta'} \right\rangle \Bigg|\nonumber\\
& =\left|\left\langle p_{N,nT+s}- \Gamma^1_{\Theta\left(\mu_{N,nT}\right)+s}, \partial_{x_k} \psi_{l,r',\theta'}\, \partial_{x_k} \psi_{l',r',\theta'} \right\rangle\right|\nonumber \\
&\leq N^{-\frac12 +\xi} \left\Vert  \partial_{x_k} \psi_{l,r',\theta'}\, \partial_{x_k} \psi_{l',r',\theta'}\right\Vert_{H^{r}_{\theta}},
\end{align}
and from Lemma~\ref{lem:estimate psil psil'} and Lemma~\ref{lem:comparison H theta theta'} we get, taking $\theta'\leq \frac{\theta}{2}$ and $r'> 2d+\lfloor r\rfloor +2$,
\begin{equation}
\sum_{l,l'\in \bbN^d}\left\Vert  \partial_{x_k} \psi_{l,r',\theta'}\, \partial_{x_k} \psi_{l',r',\theta'}\right\Vert_{H^{r}_{\theta}}\leq C_6\sum_{l,l'\in \bbN^d}\left(1+|l|\right)^{\frac{\lfloor r\rfloor -r'+2}{2}} \left(1+|l'|\right)^{\frac{\lfloor r\rfloor -r'+2}{2}}\leq C_7.
\end{equation}
We deduce:
\begin{align}
&\Bigg|\frac{2}{N^2} \sum_{i=1}^N\int_{0}^{T} D^2\Theta_{\Gamma_{\Theta\left(\mu_{N,nT}\right)+s}} \left(\left(\partial_{x_k}\gd_{Y_{i,nT+s}},0\right), \left(\partial_{x_k}\gd_{Y_{i,nT+s}},0\right)\right)\dd s\nonumber\\
&-\frac{2}{N}\sum_{l,l'\in \bbN^d}  \int_{0}^{T}\partial^2_{u_{l,r',\theta'}\, u_{l',r',\theta'}}\Theta_{\Gamma_{\Theta\left(\mu_{N,nT}\right)+s}} \left\langle \Gamma^1_{\Theta\left(\mu_{N,nT}\right)+s}, \partial_{x_k} \psi_{l,r',\theta'}\, \partial_{x_k} \psi_{l',r',\theta'} \right\rangle \dd s\Bigg|\nonumber\\
&\quad \leq C_7 \sup_{s\in [0,T_c]} \left\Vert  D^2\Theta_{\Gamma_s}\right\Vert_{\cB\cL\left(\mathbf{H}^{-r'}_{\theta'}\right)} N^{-\frac32+\xi}.
\end{align}
So, remarking that
\begin{equation}
\partial^2_{u_{l,r,\theta'}\,u_{l',r,\theta'}}\Theta_\mu =\left(1+\lambda_l\right)^\frac{r-r'}{2}\left(1+\lambda_{l'}\right)^\frac{r-r'}{2}\partial^2_{u_{l,r',\theta'}\,u_{l',r',\theta'}}\Theta_\mu
\end{equation}
one can replace $r'$ with $r$, and
the first term of the second line of \eqref{eq:Ito Theta(mu)} gives a constant drift $b_2$ in the limit at the time scale $Nt$, given by
\begin{align}
b_2:& = \frac{2}{T_\gd} \sum_{l,l'\in \bbN^d}\int_0^{T_\gd}\bigg\langle \Gamma^1_{s},\partial^2_{u_{l,r,\theta'}\,u_{l',r,\theta'}}\Theta_{\Gamma_s} \left(\nabla\psi_{l,r,\theta'}-\langle  \Gamma^1_{s},\nabla \psi_{l,r,\theta'}\rangle\right)\nonumber\\
&\qquad \qquad \qquad \qquad \qquad \qquad \qquad \qquad \qquad\qquad \cdot \gs^2 \left(\nabla \psi_{l',r,\theta'}-\langle  \Gamma^1_{s},\nabla \psi_{l',r,\theta'}\rangle\right)\bigg\rangle \dd s\nonumber\\
&\quad +\frac{2}{T_\gd} \sum_{k=1}^d \gs_k^2 \int_0^{T_\gd} \partial^2_{e_k\,e_k}\Theta_{\Gamma_s} \dd s .
\end{align}
\end{proof}

\appendix

\section{Weighted Sobolev norms and Hermite polynomials}\label{app:weighted sobolev}

We begin this appendix with the proof of Lemma~\ref{lem:control_delta_x}. 
\begin{proof}[Proof of Lemma~\ref{lem:control_delta_x}]
Let $h\in H_{ \theta}^{ r}$. Then, for all $ \eta>0$, since $ \frac{ 2}{ 4- \eta}> \frac{ 1}{ 2}$, the function $f_{ \eta}(x):= h(x) \exp \left( - \frac{ \theta \left\vert x \right\vert_{ K \sigma^{ -2}}^{ 2}}{ 4- \eta}\right)$ belongs to $H^{ r}$ (without weight, that is $w \equiv 1$) and there exists a constant $ C_{1, \eta}>0$ (independent of $h$, but such that $C_{1, \eta} \xrightarrow[ \eta\to 0]{}+\infty$) such that $ \left\Vert f_{ \eta} \right\Vert_{ H^{ r}}\leq C_{ 1,\eta} \left\Vert h \right\Vert_{ H^{ r}_{ \theta}}$. Using now that when $r>d/2$, $H^{ r}$ continuously injects into $L^{ \infty}$ (that is, in particular $ \left\Vert f_{ \eta} \right\Vert_{ \infty} \leq C_{ 2, r} \left\Vert f_{ \eta} \right\Vert_{ H^{ r}}$ for some constant $C_{ 2, r}$ independent of $ \eta$ (see \cite{Adams2003}, Th. 4.2), one obtains: 
\begin{align*}
\left\vert \left\langle \delta_{ x}\, ,\, h\right\rangle \right\vert= \left\vert h(x) \right\vert&= \left\vert f_{ \eta}(x) \right\vert  \exp \left(\frac{ \theta \left\vert x \right\vert_{ K \sigma^{ -2}}^{ 2}}{ 4- \eta}\right) \leq \left\Vert f_{ \eta} \right\Vert_{ \infty}  \exp \left(\frac{ \theta \left\vert x \right\vert_{ K \sigma^{ -2}}^{ 2}}{ 4- \eta}\right),\\
& \leq C_{ 2, r} C_{ 1,\eta} \left\Vert h \right\Vert_{ H^{ r}_{ \theta}}\exp \left(\frac{ \theta \left\vert x \right\vert_{ K \sigma^{ -2}}^{ 2}}{ 4- \eta}\right),
\end{align*} 
which gives the result.
\end{proof}

We now give two Lemma that are useful in the proof of Theorem~\ref{th:main}.
\begin{lemma}\label{lem:comparison H theta theta'}
There exists $r_0>0$ such that for all $r\geq r_0$ and $0<\theta'\leq \theta\leq 1$ there exists a constant $c_{r,\theta,\theta'}$ such that
\begin{equation}
\left\Vert u\right\Vert_{H^{-r}_{\theta'}}\leq c_{r,\theta,\theta'}\left\Vert u\right\Vert_{H^{-r}_{\theta}}.
\end{equation}
\end{lemma}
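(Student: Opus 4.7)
The strategy is to reduce the claim to an embedding between the \emph{positive-order} spaces via the pivot-space duality set up after \eqref{eq:prod_Hr}. Since the pairing between $H^{-r}_\theta$ and $H^r_\theta$ is the extension of the unweighted $L^2$ pairing and does not depend on $\theta$, if one can show that every $f\in H^r_{\theta'}$ belongs to $H^r_\theta$ with
\[
\Vert f\Vert_{H^r_\theta}\leq c_{r,\theta,\theta'}\Vert f\Vert_{H^r_{\theta'}},
\]
then for any $u$ and any $f\in H^r_{\theta'}$ with $\Vert f\Vert_{H^r_{\theta'}}\leq 1$,
\[
|\langle u,f\rangle|\leq \Vert u\Vert_{H^{-r}_\theta}\Vert f\Vert_{H^r_\theta}\leq c_{r,\theta,\theta'}\Vert u\Vert_{H^{-r}_\theta},
\]
and taking the supremum over such $f$ yields the lemma. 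Thus it suffices to establish the continuous embedding $H^r_{\theta'}\hookrightarrow H^r_\theta$.

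The crucial monotonicity is at the level of the weights themselves. Because $\theta\geq \theta'>0$ and $w_\theta(x)=\exp\bigl(-\frac{\theta}{2}|x|^2_{K\sigma^{-2}}\bigr)$, one has $w_\theta(x)\leq w_{\theta'}(x)$ for all $x\in\bbR^d$, and consequently $\Vert g\Vert_{L^2_\theta}\leq \Vert g\Vert_{L^2_{\theta'}}$ for any measurable $g$.

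For integer $r$, the proof is now immediate from the equivalent norm \eqref{eq:equiv_normHr}: applying the pointwise weight inequality to each derivative $\partial^{i_1}_{x_1}\ldots\partial^{i_d}_{x_d}f$ gives
\[
\sum_{i_1+\ldots+i_d\leq r}\bigl\Vert\partial^{i_1}_{x_1}\ldots\partial^{i_d}_{x_d}f\bigr\Vert^2_{L^2_\theta}\leq \sum_{i_1+\ldots+i_d\leq r}\bigl\Vert\partial^{i_1}_{x_1}\ldots\partial^{i_d}_{x_d}f\bigr\Vert^2_{L^2_{\theta'}},
\]
and absorbing the two equivalence constants from \eqref{eq:equiv_normHr} into $c_{r,\theta,\theta'}$ yields $\Vert f\Vert_{H^r_\theta}\leq c_{r,\theta,\theta'}\Vert f\Vert_{H^r_{\theta'}}$.

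The only genuine difficulty is passing from integer $r$ to general $r\geq r_0$, because \eqref{eq:equiv_normHr} is stated only for $r\in\bbN$. We intend to handle this either by choosing $r_0$ so that interpolation between two consecutive integer exponents applies (both $H^n_\theta$ and $H^{n+1}_\theta$ are compatible through the self-adjoint operator $(1-\cL^*_\theta)$, and $H^r_\theta$ sits as an interpolation space between them), or by expanding $f$ in the Hermite orthonormal basis $\{\psi_{l,r,\theta}\}$ from \eqref{def:polynome}, using the explicit product structure of the Hermite polynomials to compare the $\theta$- and $\theta'$-expansions. This step — keeping uniform control of the constants when transferring between $\theta$ and $\theta'$ at fractional regularity — is the main technical obstacle; the rest of the argument is a straightforward pointwise comparison of weights combined with duality.
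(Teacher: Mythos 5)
Your reduction is exactly the paper's: by the pivot-space duality, the claim boils down to the continuous embedding $H^r_{\theta'}\hookrightarrow H^r_{\theta}$, and the root observation in both your argument and the paper's is the pointwise weight monotonicity $w_\theta\leq w_{\theta'}$ when $\theta\geq\theta'$, which gives $\Vert g\Vert_{L^2_\theta}\leq\Vert g\Vert_{L^2_{\theta'}}$ for every $g$. Where you diverge is in how the weight comparison is promoted to a comparison of the $H^r$ norms. The paper writes $\Vert\psi\Vert^2_{H^r_\theta}=\int\bigl((1-\cL^*_\theta)^{r/2}\psi\bigr)^2 w_\theta\leq \Vert(1-\cL^*_\theta)^{r/2}\psi\Vert^2_{L^2_{\theta'}}$ and then invokes a specific cross-parameter equivalence from the companion paper (\cite{LP2021a}, proof of Lemma~A.2): the norm $\Vert(1-\cL^*_\theta)^{r/2}\cdot\Vert_{L^2_{\theta'}}$ --- note the $\theta$ in the operator but $\theta'$ in the measure --- is equivalent to $\Vert\cdot\Vert_{H^r_{\theta'}}$. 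That single lemma closes the argument uniformly in $r$, integer or not. You instead pass through the derivative-sum equivalent norm \eqref{eq:equiv_normHr}, which is cleaner and more elementary but stated in the paper only for $r\in\bbN$. Your integer-$r$ argument is correct as written. The gap you yourself flag at fractional regularity is real --- the lemma is stated for arbitrary $r\geq r_0$ --- but the remedies you name (interpolation through the self-adjoint scale generated by $(1-\cL^*_\theta)$, or direct Hermite expansion) would indeed work; the second is essentially what underlies the cross-$\theta$ equivalence the paper cites. So: same strategy, same reliance on the companion paper's Sobolev machinery, but the paper reaches for a single uniform-in-$r$ equivalence where you use the integer-$r$ derivative characterization plus a promised (and plausible) interpolation step.
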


\begin{proof}
For all $\psi\in H^{r}_{\theta'}$ we have
\begin{equation}
\left\Vert \psi\right\Vert^2_{H^{r}_{\theta}} = \int \left(\left(1-\cL_\theta\right)^{\frac{r}{2}}\psi\right)^2w_\theta \leq \left\Vert \left(1-\cL_\theta\right)^{\frac{r}{2}}\psi\right\Vert_{L^2_{\theta'}}.
\end{equation}
Moreover it was proved in \cite{LP2021a}, proof of Lemma A.2., that the norm $\left\Vert\left(1-\cL_\theta\right)^{\frac{r}{2}}\cdot \right\Vert_{L^2_{\theta'}}$ is equivalent to the norm $\left\Vert \cdot\right\Vert_{H^{r}_{\theta'}}$, so there exists a constant $c_{r,\theta,\theta'}$ such that 
\begin{equation}
\left\Vert \psi\right\Vert^2_{H^{r}_{\theta}}\leq c_{r,\theta,\theta'}\left\Vert \psi\right\Vert^2_{H^{r}_{\theta'}}.
\end{equation}
So we deduce
\begin{equation}
\left|\langle u,\psi\rangle \right|\leq \left\Vert u\right\Vert_{H^{-r}_{\theta}}\left\Vert \psi\right\Vert^2_{H^{r}_{\theta}} \leq c_{r,\theta,\theta'}\left\Vert u\right\Vert_{H^{-r}_{\theta}}\left\Vert \psi\right\Vert^2_{H^{r}_{\theta'}},
\end{equation}
which implies the result.
\end{proof}

\begin{lemma}\label{lem:estimate psil psil'}
For all $r'\geq 0$, $\theta,\theta'\in (0,1]$ such that $2\theta'\leq \theta$ and all integer $j\geq 0$ there exists a constant $C_{j,r,\theta,\theta'}$ such that (recall \eqref{def:polynome})
\begin{equation}
\left\Vert  \partial_{x_k} \psi_{l,r',\theta'}\, \partial_{x_k} \psi_{l',r',\theta'}\right\Vert_{H^{j}_{\theta}}\leq C_{j,r,\theta,\theta'}\frac{|l|^\frac{j}{2} +|l'|^\frac{j}{2}}{\left(1+|l|\right)^{\frac{r'-1}{2}}\left(1+|l'|\right)^{\frac{r'-1}{2}}}.
\end{equation}
\end{lemma}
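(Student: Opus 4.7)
The plan is to reduce the lemma to a uniform pointwise estimate on the classical Hermite functions, together with the standard differentiation formula. Recall from \eqref{eq:decomp Ltheta} that $\psi_{l,\theta'}$ is a tensor product of the rescaled probabilist Hermite polynomials evaluated at $\sqrt{\theta' k_i/\sigma_i^2}\, x_i$, and $\psi_{l,r',\theta'}=(1+\lambda_l)^{-r'/2}\psi_{l,\theta'}$ by \eqref{def:polynome}. The classical identity $h_n'(x)=\sqrt{n}\,h_{n-1}(x)$ for the normalized Hermite polynomials yields, componentwise,
\begin{equation}
\partial_{x_k}\psi_{l,\theta'}=c_k\sqrt{l_k}\,\psi_{l-e_k,\theta'},\qquad c_k:=\sqrt{\theta' k_k/\sigma_k^2},
\end{equation}
(with $\psi_{m,\theta'}=0$ if any coordinate of $m$ is negative). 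Iterating, for any multi-index $\beta$ one has $\partial^\beta\psi_{a,\theta'}=c_\beta\sqrt{a_1(a_1-1)\cdots(a_1-\beta_1+1)\cdots}\,\psi_{a-\beta,\theta'}$, whose prefactor is bounded by $C_j|a|^{|\beta|/2}$ as soon as $|\beta|\leq j$.

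Second, I would use the equivalent norm \eqref{eq:equiv_normHr} of $H^j_\theta$ and bound $\|\partial^\alpha(\psi_{a,\theta'}\psi_{b,\theta'})\|_{L^2_\theta}$ for every $\alpha$ with $|\alpha|\leq j$, with $a=l-e_k$ and $b=l'-e_k$. Leibniz's rule combined with the above differentiation identity gives
\begin{equation}
\left\|\partial^\alpha(\psi_{a,\theta'}\psi_{b,\theta'})\right\|_{L^2_\theta}\leq C_j\sum_{\beta\leq\alpha}|a|^{|\beta|/2}|b|^{|\alpha-\beta|/2}\left\|\psi_{a-\beta,\theta'}\psi_{b-\alpha+\beta,\theta'}\right\|_{L^2_\theta}.
\end{equation}
Thus the whole estimate reduces to the uniform bound $\|\psi_{p,\theta'}\psi_{q,\theta'}\|_{L^2_\theta}\leq C$ for every pair of multi-indices $p,q$. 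Summing the geometric sum then gives $\|\partial^\alpha(\psi_a\psi_b)\|_{L^2_\theta}\leq C'_j(|a|^{j/2}+|b|^{j/2})$.

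The key step, which I expect to be the main obstacle, is the uniform bound on the product $\psi_{p,\theta'}\psi_{q,\theta'}$ in $L^2_\theta$. I would write $w_\theta=w_{\theta'}\cdot w_{\theta'}\cdot w_{\theta-2\theta'}$ and use the assumption $2\theta'\leq\theta$ to get $w_{\theta-2\theta'}\leq 1$ pointwise. Then I would invoke the classical uniform bound on the (rescaled) Hermite functions, $|\psi_{p,\theta'}(x)|^2 w_{\theta'}(x)\leq C^d$ for all $x$ and $p$, which follows from the Plancherel--Rotach asymptotics applied coordinatewise; this yields
\begin{equation}
\|\psi_{p,\theta'}\psi_{q,\theta'}\|_{L^2_\theta}^2\leq\int |\psi_{p,\theta'}|^2 w_{\theta'}\cdot|\psi_{q,\theta'}|^2 w_{\theta'}\,\dd x\leq C^d\int|\psi_{q,\theta'}|^2 w_{\theta'}\,\dd x=C^d,
\end{equation}
since $\psi_{q,\theta'}$ is unit-normalized in $L^2_{\theta'}$.

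It remains to collect the constants. By the first paragraph,
\begin{equation}
\partial_{x_k}\psi_{l,r',\theta'}\,\partial_{x_k}\psi_{l',r',\theta'}=c_k^2\sqrt{l_k l'_k}\,(1+\lambda_l)^{-r'/2}(1+\lambda_{l'})^{-r'/2}\,\psi_{l-e_k,\theta'}\psi_{l'-e_k,\theta'}.
\end{equation}
Combining with the previous estimate on $\|\psi_a\psi_b\|_{H^j_\theta}$, using $\sqrt{l_k}\leq\sqrt{|l|}$ and the comparability $\lambda_l\asymp|l|$, gives
\begin{equation}
\left\|\partial_{x_k}\psi_{l,r',\theta'}\,\partial_{x_k}\psi_{l',r',\theta'}\right\|_{H^j_\theta}\leq C''_j\,\frac{\sqrt{|l||l'|}\,(|l|^{j/2}+|l'|^{j/2})}{(1+|l|)^{r'/2}(1+|l'|)^{r'/2}},
\end{equation}
and absorbing the $\sqrt{|l||l'|}$ into the denominators yields the announced bound with exponent $(r'-1)/2$.
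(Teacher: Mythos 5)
Your proof is correct and takes essentially the same route as the paper: both arguments rest on the Hermite differentiation identity together with the Cram\'er-type pointwise bound $|\psi_{p,\theta'}|^2\, w_{\theta'}\leq C$ (the paper cites Bonan--Clark for this) and exploit $2\theta'\leq\theta$ to split the weight $w_\theta$ and reduce to the $L^2_{\theta'}$-normalization of the $\psi_{l,\theta'}$. The only stylistic difference is that you pass through the equivalent norm \eqref{eq:equiv_normHr} and Leibniz's rule, so you only need $h_n'=\sqrt{n}\,h_{n-1}$, whereas the paper applies $(1-\cL_\theta^*)^j$ directly and therefore also invokes the three-term recurrence $x\,h_{n-1}=\sqrt{n}\,h_n+\sqrt{n-1}\,h_{n-2}$ to handle the $x\cdot\nabla$ part of the operator.
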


\begin{proof}
Recalling \eqref{eq:decomp Ltheta}, the estimates on Hermite polynomials of \cite{Bonan:1990} imply that there is a constant $C_{\theta'}$ such that for all $l\in \bbN^d$,
\begin{equation}
\psi_{l,\theta}^2 w_{\theta'}\leq C_{\theta'}.
\end{equation} 
Moreover, since $h'_n(x) = \sqrt{n}h_{n-1}(x)$ and $x h_{n-1}(x) =\sqrt{n} h_n(x)+\sqrt{n-1}h_{n-1}(x)$, for all integer $j\geq 0$ there exists a constant $C_{j,\theta,\theta'}$ such that 
\begin{equation}
\left|\left(1-\cL_\theta\right)^j \left( \partial_{x_k} \psi_{l,\theta'}\, \partial_{x_k} \psi_{l',\theta'} \right)\right| w_{\theta'} \leq C_{j,\theta,\theta'} |l|^\frac12\, |l'|^\frac12 \left( |l|^j +|l'|^j\right).
\end{equation}
Then we have
\begin{align}
\left\Vert  \partial_{x_k} \psi_{l,\theta'}\, \partial_{x_k} \psi_{l',\theta'}\right\Vert_{H^{j}_{\theta}}^2 &\leq \int \left| \partial_{x_k} \psi_{l,\theta'}\, \partial_{x_k} \psi_{l',\theta'}\right|\left|\left(1-\cL_\theta\right)^j  \left(\partial_{x_k} \psi_{l,\theta'}\, \partial_{x_k} \psi_{l',\theta'} \right) \right|w_{\theta}\nonumber\\
&\leq C_{j,\theta,\theta'} |l|^\frac12\, |l'|^\frac12 \left( |l|^j +|l'|^j\right)
\int  \left| \partial_{x_k} \psi_{l,\theta'}\, \partial_{x_k} \psi_{l',\theta'}\right|w_{\theta-\theta'}\nonumber\\
&\leq C_{j,\theta,\theta'} |l|\, |l'|\left( |l|^j +|l'|^j\right)
  \left\Vert \psi_{l,\theta'}\right\Vert_{L^2_{\theta'}} \left\Vert \psi_{l',\theta'}\right\Vert_{L^2_{\theta'}}\nonumber\\
&\leq C_{j,\theta,\theta'} |l|\, |l'|\left( |l|^j +|l'|^j\right),
\end{align}
and thus
\begin{equation}
\left\Vert  \partial_{x_k} \psi_{l,r',\theta'}\, \partial_{x_k} \psi_{l',r',\theta'}\right\Vert_{H^{j}_{\theta}}^2 \leq C_{j,\theta,\theta'}\frac{|l|\, |l'|\left( |l|^j +|l'|^j\right)}{\left(1+\lambda_l\right)^{r'}\left(1+\lambda_{l'}\right)^{r'}},
\end{equation}
which implies the result.
\end{proof}

\section*{Acknowledgements}

Both authors benefited from the support of the ANR–19–CE40–0023 (PERISTOCH),
C. Poquet from the ANR-17-CE40-0030 (Entropy, Flows, Inequalities), E. Lu\c con from the ANR-19-CE40-002 (ChaMaNe).

\end{document}